\theoremstyle{plain}
\newtheorem{thm}{Theorem}[section]
\Crefname{thm}{Theorem}{Theorems}
\newtheorem{cor}[thm]{Corollary}
\newtheorem{lem}[thm]{Lemma}
\newtheorem{prop}[thm]{Proposition}
\Crefname{prop}{Proposition}{Propositions}
\theoremstyle{definition}
\newtheorem{defi}[thm]{Definition}
\newtheorem{ex}[thm]{Example}
\newtheorem{rem}[thm]{Remark}
\definecolor{myblue}{rgb}{0.22,0.45,0.70}
\definecolor{myred}{RGB}{228,36,20}
\definecolor{mygreen}{RGB}{40,168,55}
\definecolor{myyellow}{RGB}{255,204,4}
\definecolor{mypurple}{RGB}{160,80,154}
\definecolor{mygray}{gray}{0.85}
\definecolor{mylightgray}{gray}{0.9}
\DeclareMathOperator{\coker}{coker}
\DeclareMathOperator{\im}{im}
\DeclareMathOperator{\coim}{coim}
\DeclareMathOperator{\id}{id}
\DeclareMathOperator{\colim}{colim}
\DeclareMathOperator{\Hom}{Hom}
\DeclareMathOperator{\supp}{supp}
\newcommand{\deathPart}[1]{#1^{\dagger}}
\newcommand{\birthPart}[1]{#1^{*}}
\newcommand{\ancientPart}[1]{#1^{-\infty}}
\newcommand{\ancientDeathPart}[1]{#1^{-\infty,\dagger}}
\newcommand{\immortalBirthPart}[1]{#1^{*,\infty}}
\newcommand{\immortalPart}[1]{#1^{\infty}}
\newcommand{\finitePart}[1]{#1^{\dagger,*}}
\newcommand{\constantPart}[1]{#1^{-\infty,\infty}}
\newcommand{\unbornPart}[1]{#1^{\triangleleft}}
\newcommand{\ghostlikePart}[1]{#1^{\triangleright}}
\newcommand{\lifespan}[1]{#1^{\diamond}}
\newcommand{\deathDualPart}[1]{#1_{\dagger}}
\newcommand{\birthDualPart}[1]{#1_{*}}
\newcommand{\ancientDualPart}[1]{#1_{-\infty}}
\newcommand{\ancientDeathDualPart}[1]{#1_{-\infty,\dagger}}
\newcommand{\immortalBirthDualPart}[1]{#1_{*,\infty}}
\newcommand{\immortalDualPart}[1]{#1_{\infty}}
\newcommand{\finiteDualPart}[1]{#1_{\dagger,*}}
\newcommand{\constantDualPart}[1]{#1_{-\infty,\infty}}
\newcommand{\unbornDualPart}[1]{#1_{\triangleleft}}
\newcommand{\ghostlikeDualPart}[1]{#1_{\triangleright}}
\newcommand{\dualLifespan}[1]{#1_{\diamond}}
\newcommand{\colimUnit}{\eta}
\newcommand{\limCounit}{\epsilon}
\newcommand{\dual}[1]{#1^{\vee}}
\title{Lifespan Functors and Natural Dualities in~Persistent~Homology}
\author{Ulrich Bauer \and Maximilian Schmahl}
\begin{document}

\maketitle

\begin{abstract}
We introduce lifespan functors, which are endofunctors on the category of persistence modules that filter out intervals from barcodes according to their boundedness properties. They can be used to classify injective and projective objects in the category of barcodes and the category of pointwise finite-dimensional persistence modules. They also naturally appear in duality results for absolute and relative versions of persistent (co)homology, generalizing previous results in terms of barcodes. Due to their functoriality, we can apply these results to morphisms in persistent homology that are induced by morphisms between filtrations. This lays the groundwork for the efficient computation of barcodes for images, kernels, and cokernels of such morphisms.
\end{abstract}

\section{Introduction}
\emph{Persistent homology}, the homology of a filtration of simplicial complexes, is a cornerstone in the foundations of topological data analysis. The most common setting studied in the literature is as follows: Given a filtration of simplicial complexes 
\[
K_{\bullet}\colon \emptyset=K_{-\infty}=\dots=K_0\subseteq K_1\subseteq\dots\subseteq K_{N}=\dots=K_{\infty}=K,
\]
by applying homology with coefficients in a field to each space and to each inclusion map
one obtains a diagram of vector spaces 
\[
H_{*}(K_{\bullet})\colon \dots\to H_{*}(K_0)\to H_{*}(K_1)\to\dots\to H_{*}(K_N) \to \cdots .
\] 
Such a diagram is called a \emph{persistence module}, and it decomposes into a direct sum of indecomposable diagrams, each supported on an interval.
The collection of these intervals, called the \emph{persistence barcode}, has proven to be a powerful invariant of the filtration. 

A natural question to ask is how the barcode changes when the filtration changes.
This leads to the seminal \emph{stability theorem} of Cohen-Steiner et al.\@ \cite{MR2279866}, which asserts that passing from filtrations to barcodes is a $1$-Lipschitz map. One way to approach the stability theorem is via \emph{induced matchings}, which were introduced in \cite{MR3333456}. Given a morphism of filtrations $f_{\bullet}\colon L_{\bullet}\to K_{\bullet}$, the homology functor induces a morphism $H_{*}(f_{\bullet}) \colon H_{*}(L_{\bullet}) \to H_{*}(K_{\bullet})$ of persistence modules.
From this morphism, the induced matching construction yields a partial bijection between the barcodes of $H_{*}(L_{\bullet})$ and $H_{*}(K_{\bullet})$, which can be used to bound the distance between these two barcodes from above.
The induced matching 
is defined in terms of the barcode of $\im H_{*}(f_{\bullet})$,
motivating the problem of computing this barcode.
A first algorithm for this problem has been given by Cohen-Steiner et al.\@ \cite{MR2807543} for the special case where $f_{\bullet}$ is of the form $L_{\bullet} = K_{\bullet} \cap L \hookrightarrow K_{\bullet}$.
In addition to their algorithm for image persistence, Cohen-Steiner et al.~\cite{MR2807543} present algorithms for computing barcodes of the kernel and cokernel of the morphism $H_{*}(f_{\bullet})$.
All of their algorithms rely on the standard reduction of boundary matrices. 

Cohen-Steiner et al.\@ propose applications of image persistence for recovering the persistent homology of a noisy function on a noisy domain, see also the related work by \textcite{MR2846177}.
Very recently, \textcite{reani2021cycle} proposed a method that includes the computation of induced matchings in order to pair up common topological features in different data sets, with applications to statistical bootstrapping. 
Furthermore, the computation of image barcodes is used in a distributed algorithm for persistent homology based on the Mayer--Vietoris spectral sequence by \textcite{casas2020distributing}.

Despite the usefulness of image persistence, there are a few aspects that have prevented these techniques from being widely used in applications so far.
Specifically, to the best of our knowledge, there is no publicly available implementation at this moment.
Furthermore, computation using the known algorithms is slow in comparison to modern algorithms for a single filtration.
Indeed, computing usual persistent homology for larger data sets arising in real-world applications only became feasible in recent years due to optimizations that exploit various structural properties and algebraic identities of the problem \cite{Chen.2011,MR2854319,MR4298669}.
One of our motivations is to develop a theory allowing for the adaption of these speed-ups to the computation of images and induced matchings.

One of the most important improvements for barcode computations relies on the use of cohomology based algorithms.
These were first studied by de Silva et al.\@ in \cite{MR2854319} and justified by certain duality results.
In summary, the authors provide correspondences between the barcodes for persistent homology and for \emph{persistent cohomology}, 
as well as the barcodes for \emph{persistent relative homology} 
\[
H_{*}(K,K_{\bullet})\colon H_{*}(K,K_0)\to H_{*}(K,K_1)\to\dots\to H_{*}(K,K)
\]
and similarly for \emph{persistent relative cohomology}.
The homology persistence modules simply have the same barcode as their cohomology counterparts \cite[Proposition 2.3]{MR2854319}. For the absolute-relative correspondence \cite[Proposition 2.4]{MR2854319}, it turns out that the bounded intervals in the barcodes of $H_{d-1}(K_{\bullet})$ and $H_{d}(K,K_{\bullet})$ 
are also exactly the same, and there is a one-to-one correspondence between intervals of the form $[a,\infty)$ in the barcode of $H_{d}(K_{\bullet})$ and intervals of the form $(-\infty,a)$ in the barcode of $H_{d}(K,K_{\bullet})$.

The original proof for the absolute-relative correspondence uses a decomposition of filtered chain complexes.
This strategy relies on a non-canonical choice, which does not extend to the functorial setting.
We thus adopt a different point of view based on the long exact sequence of a pair in homology.  
Applying this functorial construction to a filtration $K_{\bullet}$,
we obtain a long exact sequence of persistence modules
\[
\begin{tikzcd}
\cdots \arrow[r] & \Delta H_d(K) \arrow[r, "\limCounit_d"] & H_d(K,K_{\bullet}) \arrow[r, "\partial"] & H_{d-1}(K_{\bullet}) \arrow[r,"\colimUnit_{d-1}"] & \Delta H_{d-1}(K) \arrow[r] & \cdots 
\end{tikzcd}
\]
where $\Delta$ is used to denote constant persistence modules. 
As it turns out, 
the first two of the short exact sequences 
\[
\begin{tikzcd}[row sep=0ex]
0\arrow[r]&\im\partial\arrow[r]& H_{d-1}(K_{\bullet})\arrow[r]&\im\colimUnit_{d-1}\arrow[r]& 0
\\
0\arrow[r]&\im\limCounit_{d}\arrow[r]& H_{d}(K,K_{\bullet})\arrow[r]&\im\partial\arrow[r]& 0
\\
0\arrow[r]&\im\colimUnit_{d}\arrow[r]& \Delta H_d(K)\arrow[r]&\im\limCounit_{d}\arrow[r]& 0
\end{tikzcd}
\]
split (as a special case of \cref{cor:lifespan_split}),
showing that $\im\partial$ is a summand of both $H_{d}(K,K_{\bullet})$ and $H_{d-1}(K_{\bullet})$. Its barcode consists of the bounded intervals of either persistence module.
Moreover, the third short exact sequence has a constant persistence module $\Delta H_d(K)$ at the middle, implying that the persistence modules $\im\colimUnit_{d}$ and $\im\limCounit_{d}$ determine each other.
Together this shows that 
the barcodes of $H_{*}(K_{\bullet})$ and $H_{*}(K,K_{\bullet})$ completely determine each other. 
This argument will be made more precise in \cref{subsec:absrel}.

By observing that $\Delta H_{d}(K)\cong\Delta\colim H_{d}(K)\cong\Delta\lim H_{d}(K,K_{\bullet})$ and that $\limCounit$ and $\colimUnit$ are the counit and the unit of the adjunctions $\Delta\dashv\lim$ and $\colim\dashv\Delta$, respectively, we can generalize the definitions of the morphisms $\limCounit$ and $\colimUnit$ to arbitrary persistence modules, indexed by arbitrary totally ordered sets. Taking images, kernels and cokernels of the morphisms  $\limCounit$ and $\colimUnit$ yields endofunctors on the category of persistence modules, which we call \emph{lifespan functors}.

This general definition of lifespan functors also works in the category of \emph{matching diagrams}, since these diagrams admit limits and colimits (\cref{prop:mch_lim_colim}). The category of matching diagrams is equivalent to the category of barcodes \cite{Bauer2020Persistence}. 
The effect of the lifespan functors on persistence modules and matching diagrams is best described in terms of barcodes, as for the above example $\im\partial \cong \ker\colimUnit_{d-1} \cong \coker\limCounit_{d}$, whose barcode corresponds to the bounded intervals; see also \cref{fig:lifespan} for an illustration and \cref{cor:change_in_barcode} for a precise statement.

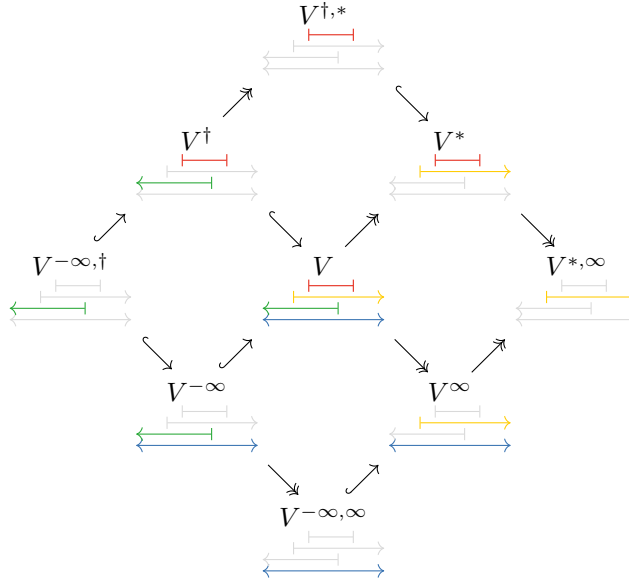
\begin{figure}[h!]
\small
\centering
\begin{tikzcd}[row sep={11ex,between origins},column sep={11ex,between origins}]
&&
\underset{\begin{tikzpicture}[xscale=0.2,yscale=0.15,anchor=north]
\draw[mygray,<->] (0,0) -- (8,0);
\draw[mygray,<-|] (0,1) -- (5,1);
\draw[mygray,|->] (2,2) -- (8,2);
\draw[myred,|-|] (3,3) -- (6,3);
\end{tikzpicture}}
{\finitePart{V}}\arrow[dr,hook]&&\\
&
\underset{\centering\begin{tikzpicture}[xscale=0.2,yscale=0.15,anchor=north]
\draw[mygray,<->] (0,0) -- (8,0);
\draw[mygreen,<-|] (0,1) -- (5,1);
\draw[mygray,|->] (2,2) -- (8,2);
\draw[myred,|-|] (3,3) -- (6,3);
\end{tikzpicture}}
{\deathPart{V}} \arrow[ur,two heads]\arrow[rd, hook] & &
\underset{\centering\begin{tikzpicture}[xscale=0.2,yscale=0.15,anchor=north]
\draw[mygray,<->] (0,0) -- (8,0);
\draw[mygray,<-|] (0,1) -- (5,1);
\draw[myyellow,|->] (2,2) -- (8,2);
\draw[myred,|-|] (3,3) -- (6,3);
\end{tikzpicture}}
{\birthPart{V}}
\arrow[dr,two heads] &\\
\underset{\centering\begin{tikzpicture}[xscale=0.2,yscale=0.15,anchor=north]
\draw[mygray,<->] (0,0) -- (8,0);
\draw[mygreen,<-|] (0,1) -- (5,1);
\draw[mygray,|->] (2,2) -- (8,2);
\draw[mygray,|-|] (3,3) -- (6,3);
\end{tikzpicture}}
{\ancientDeathPart{V}}
\arrow[ur,hook]\arrow[dr,hook]&& 
\underset{\centering\begin{tikzpicture}[xscale=0.2,yscale=0.15,anchor=north]
\draw[myblue,<->] (0,0) -- (8,0);
\draw[mygreen,<-|] (0,1) -- (5,1);
\draw[myyellow,|->] (2,2) -- (8,2);
\draw[myred,|-|] (3,3) -- (6,3);
\end{tikzpicture}}
{V}
\arrow[ru, two heads] \arrow[rd, two heads] &&
\underset{\centering\begin{tikzpicture}[xscale=0.2,yscale=0.15,anchor=north]
\draw[mygray,<->] (0,0) -- (8,0);
\draw[mygray,<-|] (0,1) -- (5,1);
\draw[myyellow,|->] (2,2) -- (8,2);
\draw[mygray,|-|] (3,3) -- (6,3);
\end{tikzpicture}}
{\immortalBirthPart{V}}
\\
&
\underset{\centering\begin{tikzpicture}[xscale=0.2,yscale=0.15,anchor=north]
\draw[myblue,<->] (0,0) -- (8,0);
\draw[mygreen,<-|] (0,1) -- (5,1);
\draw[mygray,|->] (2,2) -- (8,2);
\draw[mygray,|-|] (3,3) -- (6,3);
\end{tikzpicture}}
{\ancientPart{V}}
\arrow[dr,two heads] \arrow[ru, hook] & &
\underset{\centering\begin{tikzpicture}[xscale=0.2,yscale=0.15,anchor=north]
\draw[myblue,<->] (0,0) -- (8,0);
\draw[mygray,<-|] (0,1) -- (5,1);
\draw[myyellow,|->] (2,2) -- (8,2);
\draw[mygray,|-|] (3,3) -- (6,3);
\end{tikzpicture}}
{\immortalPart{V}}\arrow[ur,two heads]&\\
&&
\underset{\centering\begin{tikzpicture}[xscale=0.2,yscale=0.15,anchor=north]
\draw[myblue,<->] (0,0) -- (8,0);
\draw[mygray,<-|] (0,1) -- (5,1);
\draw[mygray,|->] (2,2) -- (8,2);
\draw[mygray,|-|] (3,3) -- (6,3);
\end{tikzpicture}}
{\constantPart{V}}
\arrow[ur,hook]&&
\end{tikzcd}
\caption{Lifespan functors applied to 
 an $\mathbb R$-indexed persistence module $V$
with a visualization of their effect on the barcode, according to \cref{cor:change_in_barcode}.}
\label{fig:lifespan}
\end{figure}

As an application of lifespan functors, we give a simple characterization of the projective and injective objects both in the category of barcodes (or matching diagrams) as well as the category of \emph{pointwise finite-dimensional} persistence modules (\cref{prop:inj_proj_barc,thm:proj_inj}).
In both cases, the projective objects are those with vanishing \emph{mortal part} $\deathPart{(-)}$, while the injective objects are those with vanishing \emph{nascent part} $\birthPart{(-)}$.

The lifespan functors also allow us to succinctly express the above absolute-relative correspondence in terms of certain natural isomorphisms and correspondences (\cref{prop:abs_rel_single_filtration}):
 \begin{align*}
 \deathPart{H_{d-1}(K_\bullet)} &\cong \im\partial \cong \birthPart{H_d(K,K_\bullet)} ,
 &
 \immortalPart{H_{d}(K_\bullet)} \cong \im\colimUnit_{d}  &\leftrightarrow \im\limCounit_{d} \cong \ancientPart{H_{d}(K,K_\bullet)}.
\end{align*}
Here, naturality is inherited from the construction of the long exact homology sequence.
In particular,
for the morphism of filtrations $f_{\bullet}\colon L_{\bullet}\to K_{\bullet}$ induced by a map $f\colon L\to K$,
we get an isomorphism
$\deathPart{H_{d-1}(f_{\bullet})} \cong \birthPart{H_d(f, f_{\bullet})}$.
We also get a morphism 
\[
\begin{tikzcd}
\cdots \arrow[r] & \Delta H_d(K) \arrow[r, "\limCounit_d"] & H_d(K,K_{\bullet}) \arrow[r, "\partial"] & H_{d-1}(K_{\bullet}) \arrow[r,"\colimUnit_{d-1}"] & \Delta H_{d-1}(K) \arrow[r] & \cdots \\
\cdots \arrow[r] & \Delta H_d(L)\arrow[u] \arrow[r, "\limCounit_d"] & H_d(L,L_{\bullet}) \arrow[u]\arrow[r, "\partial"] & H_{d-1}(L_{\bullet}) \arrow[u]\arrow[r,"\colimUnit_{d-1}"] & \Delta H_{d-1}(L)\arrow[u] \arrow[r] & \cdots 
\end{tikzcd}
\]
of long exact sequences. 
Note, however, that the induced sequences of kernels, images, and cokernels are no longer exact in general, so the rest of the proof of the absolute-relative-correspondence for a single filtration does not carry over completely to this setting.
In order to still obtain a useful absolute-relative correspondences for $H_*(f)$,
we develop conditions for when the lifespan functors commute with taking images, kernels, and cokernels of morphisms (\cref{thm:image_parts}), so that, for example, we get
\[
\deathPart{(\im H_{d-1}(f_{\bullet}))} \cong \im \big(\deathPart{H_{d-1}(f_{\bullet})}\big)
\cong \im \big(\birthPart{H_d(f, f_{\bullet})}\big) \cong \birthPart{(\im H_d(f, f_{\bullet}))},
\]
meaning that, as in the single filtration case, the bounded intervals in the barcodes of the images of the absolute and relative morphism agree. Furthermore, we will also state a functorial version of the correspondence between persistent homology and cohomology in terms of vector space duality (\cref{prop:hom_cohom}) and analyze how the lifespan functor behave with respect to dualization (\cref{subsec:lifespan_dual}). 

As explained in \cite{MR4298669}, computing cohomology instead of homology is particularly relevant in conjunction with the \emph{clearing} optimization, introduced %
by Chen and Kerber in \cite{Chen.2011},
and also used implicitly in the cohomology algorithm by de Silva et al.\@ \cite{MR2854319}.
There is also an adaptation of this optimization for the computation of barcodes of images, which we aim to formalize in future work. We already provide an implementation of the resulting method for computing barcodes of images of maps between the persistent homologies of Vietoris-Rips filtrations \cite{ripser_image}.

\subsubsection*{Outline}
We start off by reviewing some preliminaries on persistence including the theory of matching diagrams in \cref{sec:prelim}. In \cref{sec:lifespan}, we present the lifespan functors and some of their relevant properties in a general setting. The lifespan functors are then specialized to persistence modules in \cref{sec:lifespan_persistence}. As an application, we then use the lifespan functors in \cref{subsec:proj_inj} to classify injective and projective objects in the categories of barcodes and p.f.d.\@ persistence modules. We finish by proving functorial dualities in persistent homology involving our lifespan functors in \cref{sec:dualities}.

\subsubsection*{Notation and Conventions}
Throughout the paper, we fix a totally ordered set $(T,\leq)$ and write $\mathbf{T}$ for the corrseponding category. Note that $\mathbf{T^{\mathrm{op}}}$ is then the category corresponding to $(T,\geq)$. We also fix a field $\mathbb{F}$ and write $\mathbf{Vec}$ for the category of vector spaces over $\mathbb{F}$. The full subcategory of finite dimensional vector spaces is denoted by $\mathbf{vec}$. We write $\mathbf{Top}$ for the category of topological spaces. If $\mathbf{A}$ is an abelian category, we write $\mathbf{Ch(A)}$ for the category of chain complexes in $\mathbf{A}$. If $\mathbf{C}$ and $\mathbf{D}$ are categories, we write $\mathbf{C^D}$ for the category of functors $\mathbf{C}\to\mathbf{D}$. 

\section{Preliminaries}\label{sec:prelim}
We recall the basic definitions of absolute and relative persistent (co)homology in \cref{subsec:pers_hom}. They are examples of persistence modules, which we go over together with matching diagrams in \cref{subsec:pers_mod}. Subsequently, we present a formal framework for barcodes in \cref{subsec:barcodes}. The equivalence between barcodes and matching diagrams is recalled in \cref{subsec:equivalence}. We then collect some basic categorical properties of barcodes and matching diagrams in \cref{subsec:mchdgm}. Finally, we recall some facts about dualization of persistence modules in \cref{subsec:dualization}.

\subsection{Persistent Homology}\label{subsec:pers_hom}
\begin{defi}\label{defi:persistence_module}
The category of \emph{persistence modules indexed by $\mathbf{T}$} is defined as the functor category $\mathbf{Vec^T}$. The category of \emph{pointwise finite dimensional (p.f.d.\@) persistence modules indexed by $\mathbf{T}$} is defined as $\mathbf{vec^T}$.
\end{defi}

The full subcategory $\mathbf{vec}$ is closed under taking kernels, cokernels and finite direct sums in the abelian category $\mathbf{Vec}$, so it is also abelian. 
Moreover, since $\mathbf{T}$ is a small category, the functor categories $\mathbf{vec^T}$ and $\mathbf{Vec^T}$ are again abelian, with kernels, cokernels, direct sums, etc.\  given pointwise. 

The most commonly studied example of a persistence module is the persistent homology of a diagram of spaces. To define it, we start by observing that there is a purely formal identification of the categories $\mathbf{Ch(Vec^T)}$ and $\mathbf{Ch(Vec)^T}$. Thus, if we have a chain complex of persistence modules, we can interpret it as a diagram of chain complexes, and vice versa. Objects in these identified categories will be called \emph{persistent chain complexes indexed by $\mathbf{T}$}. %

Let $C_*\colon\mathbf{Top}\to\mathbf{Ch(Vec)}$ be the functor assigning to a topological space its singular chain complex with coefficients in $\mathbb{F}$. Similarly, let $C^* 
\colon\mathbf{Top}\to\mathbf{Ch(Vec)}^{\mathrm{op}}$ denote the singular cochain complex functor with coefficients in $\mathbb{F}$.

If $X\colon\mathbf{T}\to\mathbf{Top}$ is a filtration, or any $\mathbf{T}$-indexed diagram of topological spaces, composing with $C_*$ and $C^{*}$ yields a persistent chain complex $C_*(X)$ indexed by $\mathbf{T}$ and a persistent cochain complex $C^{*}(X)$ indexed by $\mathbf{T^{\mathrm{op}}}$. 
Moreover, recall that every diagram $X$ in $\mathbf{Top}$ has a colimit, and
the natural map $X\to\Delta\colim X$ induces morphisms $C_{*}(X)\to C_{*}(\Delta\colim X)$ and $C^*(\Delta\colim X)\to C^*(X)$. 
We write
\begin{align*}
C_{*}(\colim X,X)&=\coker(C_{*}(X)\to C_{*}(\Delta\colim X)),\\
C^{*}(\colim X,X)&=\ker(C^*(\Delta\colim X)\to C^*(X)).
\end{align*}
We then define the \emph{$d$-th persistent homology} of $X$ as $H_{d}(X)=H_{d}(C_{*}(X))$ and the \emph{$d$-th persistent homology} of $X$ as $H_{d}(X)=H_{d}(C_{*}(X))$, and similarly for cohomology.

Note that the relative versions are intrinsic to the diagram $X$, and that persistent homology is indexed by $\mathbf{T}$, while persistent cohomology is indexed by $\mathbf{T^{\mathrm{op}}}$.

\subsection{Structure of Persistence Modules}\label{subsec:pers_mod}

A natural way to construct persistence modules is to specify a basis for each index $t \in T$ and then to specify the linear maps by matching the basis elements of different indices in a compatible way.
This can be formalized as a functor from a category of sets and matchings to the category of vector spaces. 
In fact, this construction already generates all possible p.f.d.\@ persistence modules up to isomorphism, providing a structure theorem for p.f.d.\@ persistence modules.
We now introduce the requisite definitions and recall the fundamental results from the literature.

\begin{defi}\label{defi:matchings}
If $A$ and $B$ are sets, a subset $\sigma\subseteq A\times B$ is called a \emph{matching} if for each $a\in A$ there is at most one $b\in B$ with $(a,b)\in\sigma$ and for each $b\in B$ there is at most one $a\in A$ with $(a,b)\in\sigma$. If $\tau\subseteq B\times C$ is another matching, we define the composition $\tau\circ\sigma\subseteq A\times C$ as 
\[
\tau\circ\sigma=\{(a,c)\mid\text{ there exists }b\in B\text{ with }(a,b)\in\sigma\text{ and }(b,c)\in\tau\}.
\]
The resulting category, with sets as objects, matchings as morphisms, and the above composition, will be denoted by $\mathbf{Mch}$. If $\sigma\subseteq A\times B$ is a matching, we define its \emph{opposite} matching
\[
\sigma'=\{(b,a)\mid (a,b)\in\sigma\}\subseteq B\times A.
\] 
This construction makes the category $\mathbf{Mch}$ self-dual, i.e., it yields an isomorphism between $\mathbf{Mch}$ and its opposite category.
We define the category of \emph{matching diagrams indexed by $\mathbf{T}$} as the functor category $\mathbf{Mch^T}$. 
\end{defi}

We can now functorially assign a persistence module to a matching diagram.

\begin{defi}\label{defi:matching_module}
We define the functor $\mathcal{F}\colon\mathbf{Mch}\to\mathbf{Vec}$ by sending a set $A$ to the free vector space generated by $A$ and sending a matching $\sigma\subseteq A\times B$ to the linear extension of the map 
\[
a\mapsto
\begin{cases}
    b & \text{if }(a,b)\in\sigma\\
    0              & \text{otherwise.}
\end{cases}
\]
We define the \emph{matching module} functor $\mathcal{F}\colon\mathbf{Mch}^\mathbf{T}\to\mathbf{Vec^T}$ by applying $\mathcal{F}$ pointwise, i.e., $\mathcal{F}(D)=\mathcal{F}\circ D$.
\end{defi}

In certain cases we can also go from persistence modules to matching diagrams.

\begin{defi}\label{defi:interval_decomp}
We say that a persistence module $M$ is \emph{interval-decomposable} if there exists a matching diagram $D$ with $\mathcal{F}(D)\cong M$. A choice of isomorphism $\mathcal{F}(D) \cong M$ is called an \emph{interval decomposition}. %
\end{defi}

Interval decompositions are typically described in terms of barcodes, which are collections of intervals in $\mathbf{T}$.
We will formally introduce barcodes in the next section, and develop their equivalence to matching diagrams $\mathbf{Mch}^\mathbf{T}$ in a categorical sense.

The most commonly used existence result
asserts that every p.f.d.\ persistence module admits an interval decomposition \cite[Theorem~1.1]{MR3323327}; see also \cite[Theorem~1.2]{MR4143378}.
While there may be many different interval decompositions for a single persistence module, by a version of the Krull--Schmidt--Azumaya Theorem the structure of the decomposition is still unique, which can be conveniently phrased in the language of matching diagrams as follows.

\begin{thm}[\cite{MR0265428}, Theorem~2.7; see also \cite{MR0265428}, Section 4.8]\label{thm:krull_schmidt}
The functor $\mathcal{F} \colon \mathbf{Mch}^\mathbf{T} \to \mathbf{Vect}^\mathbf{T}$ reflects the property of being isomorphic:
If $D$ and $D'$ are matching diagrams with $\mathcal{F}(D)\cong\mathcal{F}(D')$, then already $D\cong D'$.

\end{thm}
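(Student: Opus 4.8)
The plan is to reduce the statement to a pointwise, combinatorial claim about matchings by exploiting the self-duality of $\mathbf{Mch}$ and the structure theory of p.f.d.\ persistence modules. Since $\mathcal{F}(D)$ and $\mathcal{F}(D')$ are isomorphic persistence modules, they are in particular pointwise isomorphic, so $D(t)$ and $D'(t)$ are sets of the same cardinality for every $t \in T$; the content is to show that the matchings $D(s \le t)$ and $D'(s \le t)$ are ``the same'' up to a coherent choice of bijections $D(t) \cong D'(t)$. I would first reformulate what it means for two matching diagrams to be isomorphic in $\mathbf{Mch}^{\mathbf T}$: a natural isomorphism is a family of bijections $\phi_t\colon D(t)\to D'(t)$ (the only isomorphisms in $\mathbf{Mch}$ are graphs of bijections) commuting with all the transition matchings. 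So the goal is to produce such a family from an isomorphism $\Phi\colon\mathcal{F}(D)\xrightarrow{\sim}\mathcal{F}(D')$.

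The key idea is that the matching-module functor $\mathcal{F}$, although not full or faithful, remembers enough: the basis vectors of $\mathcal{F}(D)(t)$ indexed by $D(t)$ are characterized intrinsically, up to scalar and up to lower/higher-index corrections, by their behaviour under the transition maps. Concretely, I would invoke the existing structure theory: a matching diagram $D$ decomposes as a coproduct of ``interval'' matching diagrams $D \cong \bigsqcup_{I} I_{\lambda}$ (one summand per element tracked through its lifespan interval), and correspondingly $\mathcal{F}(D) \cong \bigoplus_I \mathcal{F}(I_\lambda)$ is an interval decomposition of a p.f.d.\ module. Then $\mathcal{F}(D)\cong\mathcal{F}(D')$ gives two interval decompositions of the same p.f.d.\ module, and the Krull--Schmidt--Azumaya theorem for p.f.d.\ persistence modules (the cited \cite{MR3323327,MR4143378}, or the Azumaya theorem \cite{MR0265428} applied to the endomorphism ring, which is local for each interval module) yields a bijection between the two multisets of interval summands respecting the isomorphism type of each summand. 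Since $\mathcal{F}$ restricted to interval matching diagrams is injective on isomorphism classes (an interval module determines its interval), this bijection of summands is exactly an isomorphism $D\cong D'$ in $\mathbf{Mch}^{\mathbf T}$.

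The main obstacle is the indexing generality: $\mathbf T$ is an arbitrary totally ordered set, not necessarily $\mathbb Z$ or $\mathbb R$, and $\mathcal{F}(D)$ need not be p.f.d.\ — a single set $D(t)$ may be infinite — so the structure theorem for p.f.d.\ modules does not apply directly, and this is presumably why the authors cite Azumaya's general theorem \cite{MR0265428} rather than the p.f.d.\ structure results. I would handle this by working directly with matching diagrams: every matching diagram $D$ \emph{does} decompose canonically into interval summands (each element of each $D(t)$, followed forward and backward along the matchings, traces out an interval of $T$, and these orbits partition $\bigsqcup_t D(t)$), with no finiteness hypothesis needed, and each interval matching diagram has endomorphism monoid with no nontrivial idempotents. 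One then applies the general Krull--Schmidt--Azumaya machinery of \cite[Theorem~2.7 and \S4.8]{MR0265428} to the category $\mathbf{Mch}^{\mathbf T}$ — or to its image under $\mathcal{F}$ — to transport the decomposition uniqueness across $\mathcal{F}$. Verifying that $\mathbf{Mch}^{\mathbf T}$ (or the relevant subcategory) satisfies the hypotheses of Azumaya's theorem, and that $\mathcal{F}$ preserves and reflects the combinatorial data of these decompositions, is the technical heart; the rest is bookkeeping with bijections.
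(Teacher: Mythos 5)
The paper does not prove this statement itself---it imports it directly from Azumaya's general Krull--Schmidt theorem---and your proposal correctly reconstructs the intended argument: decompose $\mathcal{F}(D)$ and $\mathcal{F}(D')$ into interval modules via the canonical component decomposition, observe that each interval module has local endomorphism ring (isomorphic to $\mathbb{F}$, since an endomorphism of $C(I)$ is a single scalar by connectedness of $I$), apply Azumaya's theorem (which requires no finiteness hypotheses, addressing your correctly identified p.f.d.\ concern) to conclude the two multisets of intervals coincide, and transport the resulting bijection back to an isomorphism $D\cong D'$. The only superfluous step is your suggestion to also apply Azumaya inside $\mathbf{Mch^T}$: the decomposition of a matching diagram into components is canonical and purely combinatorial, so uniqueness of decomposition is only needed once, in $\mathbf{Vec^T}$.
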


\subsection{Barcodes}\label{subsec:barcodes}

An equivalent description for a matching diagram can be given in terms of a collection of intervals, called barcode.
The intervals encode the index range of matched elements in the matching diagram.
A barcode should be thought of as a multiset of intervals, that is, the same interval may appear multiple times. 

\begin{defi}\label{defi:barcodes}
We denote the set of all intervals in $T$ as $\mathfrak{I}(T)$, or simply as 
$\mathfrak{I}$ when the index set is clear from the context.
If $A$ is an arbitrary set, we call any subset $B\subseteq\mathfrak{I}\times A$ a \emph{barcode in $T$}. 
\end{defi}
The purpose of the set $A$ in this definition is to distinguish multiple instances of the same interval, as in the standard construction of a disjoint union.
If clear from the context, we sometimes suppress this index from the notation.
If $B\subseteq\mathfrak{I}\times A$ is a barcode and $I$ an interval in~$T$, the cardinality of the set 
$\{a\in A\mid (I,a)\in B\}$
measures how many copies of $I$ are in $B$. 

Barcodes form a category, which is equivalent to the category of matching diagrams \cite{Bauer2020Persistence}. 
We introduce some terminology used to give an explicit description of the morphisms in the category of barcodes.

\begin{defi}\label{defi:interval_language}
Let $I$ and $J$ be intervals in $T$. We say that \emph{$I$ bounds $J$ above} if for all $s\in J$ there exists $t\in I$ such that $s\leq t$. We say that \emph{$I$ bounds $J$ below} if for all $u\in J$ there exists $t\in I$ such that $t\leq u$. %
We say that \emph{$I$ overlaps $J$ above}, or that \emph{$J$ overlaps $I$ below}, if their intersection is non-empty, $I$ bounds $J$ above, and $J$ bounds $I$ below.
\end{defi}

\begin{defi}\label{defi:barcodes_category}
For barcodes $B$ and $B'$, we call a matching $\sigma\subseteq B\times B'$ an \emph{overlap matching} if for each $((I,a),(I',a'))\in\sigma$ the interval $I$ overlaps the interval $I'$ above. If $\sigma\subseteq B\times B'$ and $\tau\subseteq B'\times B''$ are overlap matchings, we define their \emph{overlap composition} as 
\[
\tau\bullet\sigma=\{((I,a),(I'',a''))\in\tau\circ\sigma\mid I\text{ overlaps }I''\text{ above}\}.
\]
The resulting category with barcodes as objects, overlap matchings as morphisms and overlap composition will be denoted by $\mathbf{Barc(T)}$.
\end{defi}

Note that two barcodes $B\subseteq\mathfrak{I}\times A$ and $B'\subseteq\mathfrak{I}\times A'$ are isomorphic if and only if there is a bijection $f\colon B\to B'$ such that for all $(I,a)\in B$ there is $a'$ in $A'$ with $f(I,a)=(I,a')$. In other words, $B$ and $B'$ are isomorphic if and only if the sets $\{a\in A\mid (I,a)\in B\}$ and $\{a'\in A'\mid (I,a')\in B'\}$ have the same cardinality for every interval $I\in\mathfrak{I}$.

\subsection{Equivalence of Barcodes and Matching Diagrams}\label{subsec:equivalence}
As we have mentioned before, the two categories $\mathbf{Mch^T}$ and $\mathbf{Barc(T)}$ are equivalent. We will now review the construction of an explicit equivalence following \cite{Bauer2020Persistence}.

\begin{defi}\label{defi:component_set}
Let $D$ be a matching diagram. We define its \emph{components} as the set of equivalence classes
\[
\mathcal{C}(D)={\textstyle \left(\bigcup\limits_{t\in T}\{t\}\times D_{t}\right)} / {\textstyle \sim},
\]
where the equivalence relation $\sim$ is defined as follows: For $d\in D_{t}$ and $d'\in D_{u}$ we set $(t,d)\sim (u,d')$ if and only if $(d,d')\in D_{t,u}$ or $(d',d)\in D_{u,t}$. 
Note that each component $Q\in\mathcal{C}(D)$ can also be regarded as a matching diagram such that $Q_{t}\subseteq D_{t}$ has at most one element for each $t \in T$.
For a component $Q\in\mathcal{C}(D)$, we define its \emph{support} as the range of indices in $T$ spanned by the component,
\[
\supp(Q)=\{t\in T\mid (t,d)\in Q \text{ for some } d \in D_t \}.
\]

\end{defi}

We use these notions to construct a functor from matching diagrams to barcodes.

\begin{defi}\label{defi:B}
We define a functor $\mathcal{B}\colon\mathbf{Mch^T}\to\mathbf{Barc(T)}$ by setting 
\[
\mathcal{B}(D)=\{(I,Q)\in \mathfrak{I}\times\mathcal{C}(D)\mid I=\supp(Q)\}
\]
for any matching diagram $D$ and
\[
\mathcal{B}(\psi)=\{((I,Q),(I',R))\in\mathcal{B}(D)\times\mathcal{B}(E)\mid
Q_{t}\times R_{t} \subseteq \psi_{t} \text{ for all }t\in I \cap I'
\}
\]
for any morphism of matching diagrams $\psi\colon D\to E$.
\end{defi}

As shown in \cite{Bauer2020Persistence}, the support of a component is indeed an interval, a morphism of matching diagrams is mapped to an overlap matching by the above construction, and we indeed get a functor.
Conversely,
we can also pass from barcodes to matching diagrams.

\begin{defi}\label{defi:D}
We define a functor $\mathcal{D}\colon\mathbf{Barc(T)}\to\mathbf{Mch^T}$ by setting $\mathcal{D}(B)$ for any barcode $B$ to be the matching diagram given by 
\begin{align*}
\mathcal{D}(B)_{t}   & = \{(I,a)\in B\mid t\in I\} , \\
\mathcal{D}(B)_{t,u} & = \{((I,a),(I',a'))\in \mathcal{D}(B)_{t}\times\mathcal{D}(B)_{u}\mid (I,a)=(I',a')\}.
\end{align*}
For an overlap matching $\sigma$, we let $\mathcal{D}(\sigma)$ be the morphism of matching diagrams with
\[
\mathcal{D}(\sigma)_{t}=\{((I,a),(I',a'))\in\sigma\mid t\in I\cap I'\}.
\]
\end{defi}
Again, we refer to \cite{Bauer2020Persistence} for the fact that $\mathcal{D}$ is a well-defined functor.

\begin{thm}[\cite{Bauer2020Persistence}]\label{thm:equivalence}
The functors $\mathcal{B}\colon\mathbf{Mch^T}\to\mathbf{Barc(T)}$ and $\mathcal{D}\colon\mathbf{Barc(T)}\to\mathbf{Mch^T}$ defined above are quasi-inverses. In particular, the categories $\mathbf{Mch^T}$ and $\mathbf{Barc(T)}$ are equivalent.
\end{thm}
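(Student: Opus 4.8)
The plan is to show that the two composite endofunctors $\mathcal{B}\circ\mathcal{D}$ and $\mathcal{D}\circ\mathcal{B}$ are naturally isomorphic to the respective identity functors. Since the statement is quoted from \cite{Bauer2020Persistence}, I would give a proof sketch that makes the natural isomorphisms explicit rather than invoke an abstract argument.

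First I would handle $\mathcal{B}\circ\mathcal{D}\cong\id_{\mathbf{Barc(T)}}$, which is the easier direction. Given a barcode $B\subseteq\mathfrak{I}\times A$, the matching diagram $\mathcal{D}(B)$ has, at each index $t$, the set of bars $(I,a)\in B$ containing $t$, and the transition matchings are identities on the label $(I,a)$. Hence the components $\mathcal{C}(\mathcal{D}(B))$ are in canonical bijection with $B$ itself: the component through $(t,(I,a))$ is exactly $\{(s,(I,a))\mid s\in I\}$, so its support is $I$. Therefore $\mathcal{B}(\mathcal{D}(B))=\{(I,(I,a)')\mid (I,a)\in B\}$ where I write $(I,a)'$ for the component, and this is isomorphic to $B$ via $(I,(I,a)')\mapsto(I,a)$; by the characterization of isomorphism of barcodes recalled after \cref{defi:barcodes_category}, this bijection preserves intervals, so it is an isomorphism in $\mathbf{Barc(T)}$. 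I would then check this bijection is natural: unravelling \cref{defi:B} and \cref{defi:D}, an overlap matching $\sigma\colon B\to B'$ is sent by $\mathcal{D}$ to the componentwise restriction of $\sigma$, and applying $\mathcal{B}$ recovers the condition ``$(I,a)$ overlaps $(I',a')$ above and is matched'', which is exactly $\sigma$ again under the bijection.

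Next I would treat $\mathcal{D}\circ\mathcal{B}\cong\id_{\mathbf{Mch^T}}$, which is where the real work lies. Given a matching diagram $D$, the barcode $\mathcal{B}(D)$ records one bar $(\supp Q,Q)$ per component $Q\in\mathcal{C}(D)$. Then $\mathcal{D}(\mathcal{B}(D))_t$ consists of all pairs $(\supp Q,Q)$ with $t\in\supp Q$, i.e.\ all components passing through index $t$. The candidate natural isomorphism $\theta_D\colon\mathcal{D}(\mathcal{B}(D))\to D$ sends, at index $t$, the component $Q$ with $t\in\supp Q$ to the unique element $d\in D_t$ with $(t,d)\in Q$ (this is well-defined because, as noted in \cref{defi:component_set}, each $Q_t$ has at most one element, and it is nonempty precisely when $t\in\supp Q$). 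One must verify: (i) $\theta_{D,t}$ is a bijection $\mathcal{D}(\mathcal{B}(D))_t\to D_t$ — surjectivity because every $d\in D_t$ lies in some (unique) component; injectivity because distinct components are disjoint; (ii) $\theta_D$ is a morphism of matching diagrams, i.e.\ compatible with the transition matchings — in $\mathcal{D}(\mathcal{B}(D))$ the element $(\supp Q,Q)$ at $t$ is matched to $(\supp Q,Q)$ at $u$ iff $t,u\in\supp Q$, and by definition of $\sim$ the elements $Q_t\ni d$ and $Q_u\ni d'$ satisfy $(d,d')\in D_{t,u}$ exactly when they lie in the same component and $t\le u$, so $\theta_D$ intertwines the two transition matchings; this requires a small argument that $(d,d')\in D_{t,u}$ is equivalent to $d,d'$ being in the same component (one direction is the definition of $\sim$; the other uses that $D_{t,u}$ is a matching, so $d$ has at most one partner); and (iii) naturality in $D$, which again is a matter of unwinding \cref{defi:B} and \cref{defi:D} on a morphism $\psi\colon D\to E$.

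The main obstacle is step (ii) in the second paragraph above: carefully checking that the transition matchings of $\mathcal{D}(\mathcal{B}(D))$ correspond under $\theta_D$ to those of $D$. The subtlety is that the equivalence relation $\sim$ generating components is the transitive-reflexive-symmetric closure of the relation ``$(d,d')\in D_{t,u}$ or $(d',d)\in D_{u,t}$'', so a priori two elements could be in the same component via a zig-zag chain without being directly matched; one needs the matching property (each element has at most one partner in each direction) together with functoriality of $D$ to collapse such chains and conclude that $d,d'$ in the same component with $t\le u$ forces $(d,d')\in D_{t,u}$. Once this is settled, bijectivity, the morphism property, and naturality all follow by routine diagram-chasing, and I would state them as such rather than expand every case. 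Since this theorem is established in \cite{Bauer2020Persistence}, I would present the above as the construction of the quasi-inverse natural isomorphisms and refer the reader there for the full verification.
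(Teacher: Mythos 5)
The paper itself gives no proof of this statement---it is quoted directly from \cite{Bauer2020Persistence}---so there is nothing internal to compare against; judged on its own terms, your sketch is a correct reconstruction of the standard argument: you build the two unit/counit isomorphisms explicitly, and you correctly isolate the one genuinely delicate point, namely that two elements lying in the same component at comparable indices $t\le u$ must be \emph{directly} matched in $D_{t,u}$, which follows from functoriality of $D$ together with the fact that each $D_{t,u}$ is a matching (this is also what makes the relation in \cref{defi:component_set} an honest equivalence relation rather than merely a generating relation). One small caveat: the naturality of $\theta_D$ in $D$ is not quite ``routine unwinding''---to see that $\mathcal{B}(\psi)$ followed by $\mathcal{D}$ recovers $\psi$, you need that if $\psi_t$ matches $Q_t$ to $R_t$ for a single $t\in\supp(Q)\cap\supp(R)$ then it does so for every index in that intersection, and this requires exactly the same style of argument (naturality of $\psi$ plus the at-most-one-partner property of matchings) as your step (ii); it deserves to be flagged alongside it rather than folded into the diagram chase. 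With that point acknowledged, the sketch is sound and matches the construction in the cited reference.
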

Note that in \cite{Bauer2020Persistence}, the equivalences were denoted by $E$ and $F$. 
Using this equivalence, we can give an explicit description of the composite functor $\mathcal{F}\circ\mathcal{D} \colon \mathbf{Barc(T)}\to\mathbf{Vec^T}$ constructing a persistence module with a given barcode as follows.

\begin{defi}\label{defi:barcode_module}
Let $I\subseteq T$ be an interval. The \emph{interval module} $C(I)$ is
the persistence module obtained from the barcode consisting of a single instance of $I$:
 \[
C(I)_t=
\begin{cases}
    \mathbb{F} & \text{if } t\in I,\\
    0              & \text{otherwise,}
\end{cases}
\qquad
\text{with structure maps}
\qquad
C(I)_{t,u}=
\begin{cases}
    \id_{\mathbb{F}} & \text{if } t,u\in I,\\
    0              & \text{otherwise.}
\end{cases}
\]
If $I$ and $J$ are intervals such that $I$ overlaps $J$ above, there exists a canonical morphism $\varphi(I,J)\colon C(I)\to C(J)$ defined by 
\[
\varphi(I,J)_t=
\begin{cases}
    \id_{\mathbb{F}} & \text{if } t\in I\cap J,\\
    0              & \text{otherwise.}
\end{cases}
\] 
We define the \emph{barcode module functor} $\mathcal{M}\colon\mathbf{Barc(T)}\to\mathbf{Vec^T}$ 
by sending a barcode $B$ to the direct sum of interval modules
\(
\bigoplus_{(I,a)\in B}C(I)
\)
and sending an overlap matching $\sigma\subseteq B\times B'$ to the direct sum of the morphisms $\varphi(I,I') \colon C(I) \to C(I')$ for all pairs $((I,a),(I',a'))\in\sigma$.
If a persistence module $M$ satisfies $\mathcal{M}(B)\cong M$, we say that $B$ is a \emph{barcode of $M$}.
\end{defi}

The following proposition is straightforward to verify from the definitions.

\begin{prop}\label{prop:passing_to_modules}
There are natural isomorphisms $\mathcal{F}\cong\mathcal{M}\circ\mathcal{B}$ and $\mathcal{M}\cong\mathcal{F}\circ\mathcal{D}$.
\end{prop}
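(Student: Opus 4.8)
The plan is to prove the second isomorphism $\mathcal{M}\cong\mathcal{F}\circ\mathcal{D}$ directly by a pointwise comparison of the two functors, and then to obtain the first isomorphism $\mathcal{F}\cong\mathcal{M}\circ\mathcal{B}$ formally by whiskering with $\mathcal{B}$ and invoking the equivalence $\mathcal{D}\circ\mathcal{B}\cong\id_{\mathbf{Mch^T}}$ from \cref{thm:equivalence}. One could equally well prove $\mathcal{F}\cong\mathcal{M}\circ\mathcal{B}$ directly, using that each component $Q$ of a matching diagram $D$ is a subdiagram taking values in singletons and empty sets, so that $\mathcal{F}(D)_t=\mathcal{F}(D_t)$ splits pointwise as a direct sum over the components containing $t$; but the bookkeeping is slightly heavier on that side, so I would route through \cref{thm:equivalence} instead.

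For $\mathcal{M}\cong\mathcal{F}\circ\mathcal{D}$, fix a barcode $B\subseteq\mathfrak{I}\times A$ and compare the two persistence modules pointwise. By \cref{defi:D}, $\mathcal{D}(B)_t=\{(I,a)\in B\mid t\in I\}$, so $\mathcal{F}(\mathcal{D}(B))_t$ is the free vector space with basis this very set. By \cref{defi:barcode_module}, $\mathcal{M}(B)_t=\bigoplus_{(I,a)\in B}C(I)_t$, and the $(I,a)$-summand is $\mathbb F$ precisely when $t\in I$ and $0$ otherwise; so $\mathcal{M}(B)_t$ carries a canonical basis also indexed by $\{(I,a)\in B\mid t\in I\}$, where the basis vector for $(I,a)$ is the generator of the summand $C(I)_t=\mathbb F$. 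Let $\Phi(B)_t\colon\mathcal{F}(\mathcal{D}(B))_t\to\mathcal{M}(B)_t$ be the linear isomorphism identifying these bases. To see that the $\Phi(B)_t$ form a morphism of persistence modules, I would check compatibility with structure maps for $t\le u$: by \cref{defi:D}, $\mathcal{D}(B)_{t,u}$ matches $(I,a)$ with itself when $t,u\in I$ and with nothing otherwise, so $\mathcal{F}(\mathcal{D}(B))_{t,u}$ sends the basis vector $(I,a)$ to $(I,a)$ if $u\in I$ and to $0$ if $u\notin I$; this is exactly the effect of $C(I)_{t,u}$ on its generator, and summing over $(I,a)$ reproduces $\mathcal{M}(B)_{t,u}$.

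It then remains to check naturality of $\Phi$ in $B$. Given an overlap matching $\sigma\subseteq B\times B'$, \cref{defi:D} gives $\mathcal{D}(\sigma)_t=\{((I,a),(I',a'))\in\sigma\mid t\in I\cap I'\}$, so $\mathcal{F}(\mathcal{D}(\sigma))_t$ sends the basis vector $(I,a)$ to the basis vector $(I',a')$ when $((I,a),(I',a'))\in\sigma$ and $t\in I\cap I'$, and to $0$ otherwise. By \cref{defi:barcode_module}, $\mathcal{M}(\sigma)$ is the direct sum of the morphisms $\varphi(I,I')\colon C(I)\to C(I')$ over matched pairs, and $\varphi(I,I')_t$ is the identity when $t\in I\cap I'$ and zero otherwise; hence $\mathcal{M}(\sigma)_t$ acts on the basis of $\mathcal{M}(B)_t$ in exactly the same way. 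So every naturality square of $\Phi$ commutes, and $\Phi\colon\mathcal{F}\circ\mathcal{D}\Rightarrow\mathcal{M}$ is a natural isomorphism. Finally, whiskering $\Phi$ with $\mathcal{B}$ yields $\mathcal{F}\circ\mathcal{D}\circ\mathcal{B}\cong\mathcal{M}\circ\mathcal{B}$, and whiskering the natural isomorphism $\mathcal{D}\circ\mathcal{B}\cong\id_{\mathbf{Mch^T}}$ of \cref{thm:equivalence} with $\mathcal{F}$ yields $\mathcal{F}\circ\mathcal{D}\circ\mathcal{B}\cong\mathcal{F}$; composing gives $\mathcal{F}\cong\mathcal{M}\circ\mathcal{B}$.

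All steps are routine unwindings of \cref{defi:matching_module,defi:D,defi:barcode_module}; the only points that deserve attention are carrying the auxiliary index set $A$ — and hence the multiplicities of repeated intervals — correctly through every identification, and verifying naturality with respect to arbitrary overlap matchings rather than only on objects. I do not anticipate any genuine obstacle here: as the paper already indicates, this is a bookkeeping verification rather than a substantive argument.
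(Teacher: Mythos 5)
Your proof is correct and is exactly the routine unwinding of \cref{defi:matching_module,defi:D,defi:barcode_module} that the paper leaves implicit when it calls the proposition ``straightforward to verify from the definitions''; the pointwise basis identification, the structure-map check, and the naturality check are all handled properly, including the multiplicity index $A$. Deriving $\mathcal{F}\cong\mathcal{M}\circ\mathcal{B}$ by whiskering with $\mathcal{B}$ and invoking $\mathcal{D}\circ\mathcal{B}\cong\id$ from \cref{thm:equivalence} is a clean and legitimate shortcut for the first isomorphism.
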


\subsection{Categorical Properties of Matching Diagrams}\label{subsec:mchdgm}

One can use large parts of the theory of homological algebra in the categories $\mathbf{Mch}$ and $\mathbf{Mch^T}$ since they have the following property.

\begin{defi}\label{defi:puppe_exact}
A category is called \emph{Puppe-exact} or \emph{p-exact} if
	it has a zero object,
	it has all kernels and cokernels,
	every mono is a kernel and every epi is a cokernel, and
	every morphism has an epi-mono-factorization.
\end{defi}
Put informally, a Puppe-exact category is an abelian category that need not have (co)products. Recall that in any category with kernels and cokernels, monos have vanishing kernels and epis have vanishing cokernels. While the converse is not true in general, it is true in p-exact categories.

\begin{lem}[{\cite[Korollar 2.4.4]{MR0269713}}]
A morphism in a p-exact category is mono if and only if its kernel vanishes and it is epi if and only if its cokernel vanishes.
\end{lem}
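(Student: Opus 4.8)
The plan is to bootstrap directly from the axioms: the nontrivial content is to turn ``vanishing kernel/cokernel'' into ``mono/epi'', and for this I would exploit the epi--mono factorization together with the representation of epis as cokernels and of monos as kernels. I would first note that one implication in each equivalence is free, as already recalled just above the statement: in any pointed category with all kernels and cokernels, a mono has trivial kernel and an epi has trivial cokernel. I would also observe that being p-exact is visibly self-dual (each of the four axioms is preserved by passing to the opposite category), so it suffices to prove a single converse, say: if $\ker f = 0$ then $f$ is mono.

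To do this, I would factor $f\colon A \to B$ as $f = m \circ e$ with $e\colon A \to I$ epi and $m\colon I \to B$ mono, using the factorization axiom. Since $e$ is an epi in a p-exact category, it is the cokernel of some morphism $g\colon C \to A$; in particular $e g = 0$. Then $f g = m e g = 0$, so $g$ factors through $\ker f$; as $\ker f$ is a zero object, this forces $g = 0$. Hence $e$ is a cokernel of the zero morphism $0\colon C \to A$. Since $\id_A$ is also such a cokernel, and cokernels are unique up to unique isomorphism, $e$ is an isomorphism, whence $f = m e$ is mono.

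Finally I would deduce the ``epi'' half by dualizing: applying the already-proven statement in the opposite category (which is again p-exact, and in which monos correspond to epis, kernels to cokernels) shows that a morphism with vanishing cokernel is epi. I do not anticipate a genuine obstacle; the only points requiring a modicum of care are the self-duality of p-exactness, so that the dual argument is legitimate, and the small observation that the cokernel of a zero morphism $0\colon C \to A$ is $\id_A$ up to isomorphism -- which holds because every morphism out of $A$ composes to zero with $0$ and hence factors uniquely through $\id_A$.
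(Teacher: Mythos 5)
Your argument is correct. The paper itself offers no proof of this lemma --- it is quoted from the literature (Korollar 2.4.4 of the cited reference) --- so you are supplying a self-contained argument where the paper defers to a citation. Your proof is the standard one and uses exactly the right axioms: the easy direction holds in any pointed category with kernels and cokernels (a kernel morphism is always mono, and a mono with zero kernel morphism forces the kernel object to be terminal, hence zero); for the converse you factor $f=m\circ e$, use that the epi $e$ is a cokernel of some $g$, observe that $fg=0$ forces $g$ to factor through $\ker f=0$ and hence vanish, and conclude that $e$, being a cokernel of a zero morphism, coincides with $\id_A$ up to the canonical isomorphism of cokernels. The appeal to self-duality of the p-exact axioms to get the epi half is legitimate, since each of the four defining conditions in \cref{defi:puppe_exact} is manifestly preserved under passage to the opposite category. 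No gaps.
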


We will use this lemma throughout without explicit reference. In particular, we will use it for barcodes and matching diagrams, which form p-exact categories.

\begin{prop}[{\cite[Section~1.6.4]{MR2977522}}]\label{thm:mch_puppe_exact}
$\mathbf{Mch}$ is Puppe-exact. For a matching $\sigma\subseteq A\times B$ we have
\begin{align*}
\ker\sigma&=\{a\in A\mid (a,b)\notin\sigma\text{ for all }b\in B\},\\
\im\sigma&=\{b\in B\mid (a,b)\in\sigma\text{ for some }a\in A\},\\
\coker\sigma&=\{b\in B\mid (a,b)\notin\sigma\text{ for all }a\in A\},\\
\coim\sigma&=\{a\in A\mid (a,b)\in\sigma\text{ for some }b\in B\}
.
\end{align*}
$\mathbf{Mch^T}$ is also Puppe-exact, with kernels, cokernels etc.\ given pointwise.
\end{prop}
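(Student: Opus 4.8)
The plan is to write down the entire $p$-exact structure of $\mathbf{Mch}$ by hand, verify the axioms there, and then observe that everything transfers to $\mathbf{Mch^T}$ index by index. In $\mathbf{Mch}$ the empty set is a zero object: for any set $B$ there is exactly one matching contained in $\emptyset\times B$ and exactly one contained in $B\times\emptyset$, so $\emptyset$ is at once initial and terminal, and the zero morphism $A\to B$ is the empty matching. Given $\sigma\subseteq A\times B$, I would check directly that the inclusion matching of $K=\{a\in A\mid (a,b)\notin\sigma\text{ for all }b\in B\}$ into $A$ is a kernel of $\sigma$: composing it with $\sigma$ gives the empty matching by the definition of $K$, and any matching $\tau\subseteq C\times A$ with $\sigma\circ\tau=\emptyset$ satisfies $\tau\subseteq C\times K$, hence factors through the inclusion, uniquely because that inclusion is an injective relation. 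Dualizing via the self-duality of $\mathbf{Mch}$ from \cref{defi:matchings} yields the stated description of $\coker\sigma$, and then $\im\sigma=\ker(B\to\coker\sigma)$ and $\coim\sigma=\coker(K\hookrightarrow A)$, computed with the formulas just obtained, give precisely the two remaining sets.

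What is left of the $p$-exactness axioms is the epi--mono factorization together with the statements that monos are kernels and epis are cokernels. The observation that drives both is that an arbitrary matching $\sigma$ restricts to a \emph{bijection} $\tilde\sigma$ between $\coim\sigma$ and $\im\sigma$; the factorization $A\twoheadrightarrow\coim\sigma\xrightarrow{\ \tilde\sigma\ }\im\sigma\hookrightarrow B$ then recovers $\sigma$, the outer matchings are epi and mono since their cokernel and kernel are empty, and $\tilde\sigma$ is an isomorphism. For normality I would first note that a matching $\mu$ is a monomorphism exactly when $\ker\mu=\emptyset$: if $a_0\in A$ is unmatched, then the empty matching and $\{(\ast,a_0)\}$, viewed as matchings from a one-point set into $A$, differ yet agree after composition with $\mu$; conversely, if $\ker\mu=\emptyset$ then $\mu$ is literally an injection of sets, hence left-cancellable. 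Given a mono $\mu\colon A\to B$, the computation above shows that the kernel of the cokernel projection $B\to\coker\mu$ is the inclusion $\im\mu\hookrightarrow B$, and $\mu$ factors through this inclusion via the isomorphism $A\cong\im\mu$; thus $\mu$ is a kernel. Epis being cokernels follows by self-duality, completing the proof for $\mathbf{Mch}$.

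For $\mathbf{Mch^T}$ the plan is to show that all of this data may be taken pointwise. The constant diagram at $\emptyset$ is a zero object. For $\psi\colon D\to E$ I would set $(\ker\psi)_t=\ker\psi_t$; the one thing to check is that each $D_{t,u}$ restricts to a matching $(\ker\psi)_t\times(\ker\psi)_u$, and this is exactly where naturality of $\psi$ enters: if $a$ is unmatched by $\psi_t$ and $(a,a')\in D_{t,u}$, then $a$ is unmatched by $\psi_u\circ D_{t,u}=E_{t,u}\circ\psi_t$, forcing $a'$ to be unmatched by $\psi_u$. The universal property of $\ker\psi$ is then inherited componentwise, and cokernels, images, and coimages are treated the same way (or by the self-duality $(\mathbf{Mch^T})^{\mathrm{op}}\cong\mathbf{Mch}^{\mathbf{T}^{\mathrm{op}}}$). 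To see that monos and epis are detected pointwise, I would use the general fact that in any category with a zero object and kernels a monomorphism has trivial kernel; since kernels in $\mathbf{Mch^T}$ are pointwise, a monic $\psi$ has $\ker\psi_t=\emptyset$ for every $t$, so each $\psi_t$ is monic by the one-object case. Normality, conormality, and the epi--mono factorization in $\mathbf{Mch^T}$ then follow by assembling the componentwise witnesses, and these assemble into genuine morphisms of matching diagrams because they are built from the canonical pointwise inclusions and restrictions. I expect this last point to be the main obstacle: confirming that the pointwise kernel, cokernel, image, and coimage are functorial and that the three structural axioms are ``local'', so that the whole question reduces, through naturality, to the single-object statements; alternatively, one could bypass it by citing the general principle that a functor category valued in a $p$-exact category is again $p$-exact with all the relevant structure computed pointwise.
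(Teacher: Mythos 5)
Your proposal is correct, but note that the paper does not actually prove this proposition: it is stated as a citation to Grandis (\cite{MR2977522}, Section~1.6.4), where $\mathbf{Mch}$ appears as the category of sets and partial bijections. Your contribution is therefore a self-contained verification rather than a variant of an argument in the paper, and all the essential steps check out: the empty set as zero object, the direct universal-property check for the kernel, the transfer to cokernels via the self-duality from \cref{defi:matchings}, the computation of image and coimage as $\ker(B\to\coker\sigma)$ and $\coker(\ker\sigma\hookrightarrow A)$, the epi--mono factorization through the partial bijection $\coim\sigma\to\im\sigma$, and the characterization of monos as matchings with empty kernel (your two-morphisms-from-a-point argument is exactly the right way to avoid circularity, since ``zero kernel implies mono'' is not available before p-exactness is established). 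The one step you flag as a possible obstacle --- assembling the pointwise kernels, factorizations, and normality witnesses into morphisms of $\mathbf{Mch^T}$ --- does go through: the closure of the pointwise kernels under the structure maps follows from naturality exactly as you argue, and the induced factorizations are automatically natural because they are unique at each index; alternatively, this is covered by the general fact (also in Grandis) that a functor category with values in a p-exact category is p-exact with pointwise structure. What your approach buys over the citation is an explicit derivation of the four formulas, which the paper uses repeatedly (e.g.\ in \cref{prop:ker_overlap_matching} and \cref{prop:inj_proj_barc}); the cost is length for a statement the authors chose to treat as known.
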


Importantly, these constructions are compatible with the passage to vector spaces and persistence modules, as expressed in the following statement.

\begin{prop}\label{prop:F_creates_exactness}
The functor $\mathcal{F}$ preserves and reflects exactness, i.e., a sequence of matchings 
$V \to V' \to V''$
is exact if and only if the corresponding sequence of vector spaces 
$\mathcal{F}(V) \to \mathcal{F}(V') \to \mathcal{F}(V'')$
is exact. The same holds for $\mathcal{F}$ as a functor $\mathbf{Mch^T} \to \mathbf{Vec^T}$.
\end{prop}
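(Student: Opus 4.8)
The plan is to reduce everything to the single-matching case, and there to a direct inspection of the explicit formulas for kernel, image, and cokernel from \cref{thm:mch_puppe_exact}. Since exactness of a two-step sequence $V \to V' \to V''$ at $V'$ means that the image of the first map equals the kernel of the second, and since both $\mathcal{F}$ and the formation of kernels/images are computed pointwise in $\mathbf{Mch^T}$ and $\mathbf{Vec^T}$, it suffices to prove the statement for a single pair of composable matchings $\sigma \subseteq A \times B$ and $\tau \subseteq B \times C$: the sequence $A \to B \to C$ is exact if and only if $\mathcal{F}(A) \to \mathcal{F}(B) \to \mathcal{F}(C)$ is exact. I would state this reduction first, noting that $\mathcal{F}$ on $\mathbf{Mch^T}$ is defined pointwise and that a sequence of matching diagrams is exact precisely when it is exact at every index $t \in T$, and likewise for persistence modules.

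For the single-matching case, the key observation is that the free vector space $\mathcal{F}(A)$ has the distinguished basis $A$, and $\mathcal{F}(\sigma)$ sends each basis vector $a$ either to a basis vector $b$ (when $(a,b) \in \sigma$) or to $0$. Consequently $\mathcal{F}(\sigma)$ is, up to reordering basis elements, a "partial permutation" matrix, and its kernel, image, coimage, and cokernel are all spanned by subsets of the respective bases — precisely the subsets $\ker\sigma$, $\im\sigma$, $\coim\sigma$, $\coker\sigma$ described in \cref{thm:mch_puppe_exact}. I would verify the identities
\[
\mathcal{F}(\ker\sigma) = \ker\mathcal{F}(\sigma), \qquad \mathcal{F}(\im\sigma) = \im\mathcal{F}(\sigma)
\]
as subspaces of $\mathcal{F}(A)$ and $\mathcal{F}(B)$ respectively, by tracing basis elements through the formulas; this is the routine computational core. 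From this, exactness at $B$ of $A \xrightarrow{\sigma} B \xrightarrow{\tau} C$, which by definition of p-exactness amounts to the equality $\im\sigma = \ker\tau$ of subobjects of $B$, is equivalent to $\mathcal{F}(\im\sigma) = \mathcal{F}(\ker\tau)$ as subspaces of $\mathcal{F}(B)$ — using that $\mathcal{F}$, sending a subset of a basis to its span, is injective on subobjects of a fixed object — and hence to $\im\mathcal{F}(\sigma) = \ker\mathcal{F}(\tau)$, i.e., exactness of the vector space sequence. This gives both "preserves" and "reflects" simultaneously.

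The one point requiring care — and the main obstacle — is the bookkeeping of subobjects: in a general p-exact (or abelian) category, "the sequence is exact at $B$" is a statement about subobjects of $B$, and I must make sure that the monomorphisms $\im\sigma \hookrightarrow B$ and $\ker\tau \hookrightarrow B$ are carried by $\mathcal{F}$ to the monomorphisms $\im\mathcal{F}(\sigma) \hookrightarrow \mathcal{F}(B)$ and $\ker\mathcal{F}(\tau) \hookrightarrow \mathcal{F}(B)$ compatibly, so that the subobject equality really does transport back and forth. This is straightforward here because the relevant monos are genuine inclusions of basis subsets, and $\mathcal{F}$ applied to an inclusion of sets is the inclusion of spanned subspaces; but it is the place where one should be explicit rather than wave hands. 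Once this is set up, the $\mathbf{Mch^T}$/$\mathbf{Vec^T}$ version follows formally by applying the $\mathbf{Mch}$/$\mathbf{Vec}$ version at each $t \in T$, since exactness, kernels, images, and the functor $\mathcal{F}$ are all pointwise.
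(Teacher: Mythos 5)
Your proof is correct and is exactly the argument the paper intends: the paper states this proposition without proof, immediately after the explicit kernel/image formulas of \cref{thm:mch_puppe_exact}, and your pointwise reduction plus the basis-level verification that $\mathcal{F}(\ker\sigma)=\ker\mathcal{F}(\sigma)$ and $\mathcal{F}(\im\sigma)=\im\mathcal{F}(\sigma)$ as subspaces is precisely the ``straightforward'' check being left to the reader. Your care with the subobject bookkeeping---that $\mathcal{F}$ sends the basis-subset inclusions to the corresponding subspace inclusions and is injective on such subobjects, and that the matching condition is what makes the kernel computation work---is well placed.
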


Using the equivalence between $\mathbf{Mch^T}$ and $\mathbf{Barc(T)}$, we can translate the constructions in \cref{thm:mch_puppe_exact} to describe the kernels, cokernels, and images of overlap matchings explicitly as barcodes.

\begin{defi}\label{defi:ker_coker_intervals}
For an overlap matching $\sigma\subseteq B\times B'$ and $(I,a)\in B$, $(I',a')\in B'$, we set
\begin{align*}
\ker(\sigma,(I,a))&=
\begin{cases}
    (I\setminus I',a) & \text{if }((I,a),(I',a'))\in\sigma , \\
    (I,a)              & \text{otherwise} ;
\end{cases}
\\
\coker(\sigma,(I',a'))&=
\begin{cases}
    (I'\setminus I,a') & \text{if }((I,a),(I',a'))\in\sigma , \\
    (I',a')              & \text{otherwise} .
\end{cases}
\end{align*}
\end{defi} 

\begin{prop}[\cite{Bauer2020Persistence}]\label{prop:ker_overlap_matching}
Let $B\subseteq\mathfrak{I}\times A$ and $B'\subseteq\mathfrak{I}\times A'$ be barcodes. Any overlap matching $\sigma\subseteq B\times B'$ has a kernel, coimage, image and cokernel in $\mathbf{Barc(T)}$, with
\begin{align*}
\ker\sigma&=\{(J,a)\in\mathfrak{I}\times A\mid J=\ker(\sigma,(I,a))\text{ for }(I,a)\in B\}\\
\coim\sigma&=\{(J,a)\in\mathfrak{I}\times A\mid J=I\cap I'\text{ for }((I,a),(I',a'))\in\sigma\}\\
\im\sigma&=\{(J,a')\in\mathfrak{I}\times A'\mid J=I\cap I'\text{ for }((I,a),(I',a'))\in\sigma\}\\
\coker\sigma&=\{(J,a')\in\mathfrak{I}\times A'\mid J=\coker(\sigma,(I',a'))\text{ for }(I',a')\in B'\}.
\end{align*}
\end{prop}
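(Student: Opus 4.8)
The plan is to reduce everything to the corresponding computation in $\mathbf{Mch^T}$ via the equivalence of \cref{thm:equivalence}. Since $\mathbf{Mch^T}$ is p-exact (\cref{thm:mch_puppe_exact}) and the quasi-inverse functors $\mathcal{D}\colon\mathbf{Barc(T)}\to\mathbf{Mch^T}$ and $\mathcal{B}\colon\mathbf{Mch^T}\to\mathbf{Barc(T)}$, being equivalences, preserve the zero object and all limits and colimits, the category $\mathbf{Barc(T)}$ is p-exact as well, and the kernel, coimage, image, and cokernel of an overlap matching $\sigma\subseteq B\times B'$ are obtained by computing the respective object for $\mathcal{D}(\sigma)\colon\mathcal{D}(B)\to\mathcal{D}(B')$ in $\mathbf{Mch^T}$ and then applying $\mathcal{B}$. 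So the first step is to write out $\mathcal{D}(\sigma)$ using \cref{defi:D} and feed it into the pointwise formulas of \cref{thm:mch_puppe_exact}.

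Carrying this out: at an index $t\in T$, $\mathcal{D}(B)_t$ consists of the bars $(I,a)\in B$ with $t\in I$, and $\mathcal{D}(\sigma)_t$ matches $(I,a)$ to $(I',a')$ precisely when $((I,a),(I',a'))\in\sigma$ and $t\in I\cap I'$. Plugging into \cref{thm:mch_puppe_exact}, a bar $(I,a)$ lies in $\ker(\mathcal{D}(\sigma))_t$ iff $t\in I$ and $(I,a)$ is either unmatched by $\sigma$ or matched to some $(I',a')$ with $t\notin I'$; analogously for image, coimage, and cokernel. Hence $\ker\mathcal{D}(\sigma)$ is the sub-matching-diagram of $\mathcal{D}(B)$ obtained by restricting each bar $(I,a)$ to the index range $\ker(\sigma,(I,a))$ of \cref{defi:ker_coker_intervals}; likewise $\coim\mathcal{D}(\sigma)$ and $\im\mathcal{D}(\sigma)$ restrict matched bars to $I\cap I'$ on the $A$- and $A'$-side respectively, while $\coker\mathcal{D}(\sigma)$ restricts $(I',a')$ to $\coker(\sigma,(I',a'))$. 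At this point one needs the elementary order-theoretic fact, already used in \cite{Bauer2020Persistence}, that if $I$ overlaps $I'$ above then $I\cap I'$ is an initial segment of $I$ and a final segment of $I'$ (it follows from $I'$ bounding $I$ below and $I$ bounding $I'$ above); consequently $I\setminus I'$ and $I'\setminus I$ are intervals (possibly empty) and $I\cap I'$ is a nonempty interval.

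Finally, apply $\mathcal{B}$. Since the structure maps of $\mathcal{D}(B)$ match every bar only to itself, each bar $(I,a)$ spans its own component, and the same holds for any sub-matching-diagram; so in each of the four diagrams above the components are exactly the nonempty restricted bars, with supports $\ker(\sigma,(I,a))$, $I\cap I'$, $I\cap I'$, and $\coker(\sigma,(I',a'))$ respectively. By \cref{defi:B} this produces exactly the four barcodes in the statement, where empty set-differences simply contribute no bar because the empty set is not an interval. I expect the only genuine subtlety to be this interval bookkeeping—checking that ``overlaps above'' makes the relevant differences and intersections intervals and that the components of the resulting matching diagrams are as claimed—while the remainder is a direct unwinding of the definitions and of \cref{thm:mch_puppe_exact} transported along the equivalence.
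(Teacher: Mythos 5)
Your proof is correct and follows essentially the route the paper intends: the paper itself does not reprove this statement (it cites \cite{Bauer2020Persistence}), but the paragraph preceding it describes exactly your strategy of transporting the pointwise formulas of \cref{thm:mch_puppe_exact} along the equivalence of \cref{thm:equivalence}. The one point genuinely requiring an argument—that ``overlaps above'' forces $I\cap I'$ to be downward closed in $I$ and upward closed in $I'$, so that the differences and the intersection are again intervals and the restricted bars form single components—is the point you identify and verify correctly.
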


Using the p-exact structure on barcodes, we will later consider exact sequences of barcodes and translate them to exact sequences of persistence modules. We will also further study the categorical structure on barcodes later on and classify injective and projective objects. In both of these settings, the following characterization of split mono and epi overlap matchings will be important.

\begin{lem}\label{prop:split_overlap}
Let $\sigma$ be an overlap matching and assume that $\sigma$ is mono or epi. Then $\sigma$ is split if and only if $((I,a),(I',a'))\in\sigma$ implies $I=I'$.
\end{lem}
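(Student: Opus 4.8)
The statement characterizes when a mono or epi overlap matching $\sigma\subseteq B\times B'$ is split: exactly when every matched pair $((I,a),(I',a'))\in\sigma$ has $I=I'$. I would prove both directions directly using the explicit formulas for kernels, cokernels, images, and coimages of overlap matchings from \cref{prop:ker_overlap_matching}, together with the characterization of isomorphisms of barcodes recorded after \cref{defi:barcodes_category} (two barcodes are isomorphic iff they contain the same intervals with the same multiplicities). By self-duality of the problem (an overlap matching that is epi becomes, via the opposite construction, one that is mono, and "$I=I'$" is symmetric), it suffices to treat one of the two cases — say $\sigma$ mono — and then remark that the epi case follows by duality; alternatively I would just run both arguments in parallel since they are short.

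For the forward direction, suppose $\sigma$ is split, so in particular $\sigma$ is mono (if not assumed) and admits a retraction $\rho\colon B'\to B$ with $\rho\bullet\sigma=\id_B$. Given $((I,a),(I',a'))\in\sigma$, the composite $\rho\bullet\sigma$ matches $(I,a)$ to itself, which by definition of overlap composition requires that there be some pair $((I',a''),(I,a'''))\in\rho$ with $I$ overlapping $I'$ above \emph{and} $I'$ overlapping $I$ above; but overlapping above in both directions forces $I=I'$ (if $I$ bounds $I'$ above and below and vice versa, and they intersect, the two intervals coincide — this is the one small interval-combinatorics point to check carefully). Actually the cleanest route: since $\sigma$ is mono, $\coim\sigma\cong B$, and by \cref{prop:ker_overlap_matching} $\coim\sigma$ consists of the intervals $I\cap I'$ over matched pairs; comparing multiplicities of each interval in $\coim\sigma$ and in $B$ via the isomorphism criterion forces $I\cap I' = I$, i.e.\ $I\subseteq I'$, for every matched pair. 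Dually, if instead one wants to use that a split mono has $\coker\sigma$ a complement, one argues $I'\subseteq I$; but here I only have mono, so I should instead exploit splitness directly: a split mono $\sigma$ factors the identity, and chasing the overlap-composition condition as above yields $I'\subseteq I$ as well, hence $I=I'$. I would present this as: splitness gives a retraction, the retraction composed with $\sigma$ is the identity on $B$, and unravelling overlap composition at the level of a single matched pair pins down $I=I'$.

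For the converse, assume $\sigma$ is mono and every matched pair satisfies $I=I'$; I must build a retraction $\rho\subseteq B'\times B$. The natural candidate is $\rho=\sigma'$, the opposite matching: $\rho=\{((I',a'),(I,a))\mid((I,a),(I',a'))\in\sigma\}$. Since $I=I'$ on $\sigma$, each such pair has $I'$ overlapping $I$ above trivially (an interval overlaps itself above), so $\rho$ is a genuine overlap matching. Then $\rho\bullet\sigma$: a pair $((I,a),(I,a))$ lies in $\rho\circ\sigma$, and $I$ overlaps $I$ above, so $((I,a),(I,a))\in\rho\bullet\sigma$; conversely nothing else appears because $\sigma$ is mono (so the "first coordinate" map $B\to B'$ is injective on matched elements) — here is where monicity is used. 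Hence $\rho\bullet\sigma=\id_B$ and $\sigma$ is split mono. For the epi case, symmetrically $\sigma'$ is a section. The main (minor) obstacle is the bookkeeping around overlap \emph{composition} versus ordinary composition of matchings — one must check that the extra "overlaps above" filtering in $\bullet$ does not delete the pairs we need and does delete the ones we do not want; this is exactly where the hypothesis $I=I'$ (making overlap-above automatic and the intersection equal to the whole interval) and the mono/epi hypothesis (controlling multiplicities, via \cref{prop:ker_overlap_matching} and the isomorphism criterion for barcodes) do the work. I would also double-check the degenerate edge cases (e.g.\ $I'\setminus I$ or $I\setminus I'$ empty, unmatched bars) against \cref{defi:ker_coker_intervals}, but these cause no trouble.
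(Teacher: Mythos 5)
Your argument is correct and is essentially the paper's own proof: the converse direction takes the opposite matching $\sigma'$ as the one-sided inverse (noting that $I=I'$ makes it an overlap matching), and the forward direction extracts from the retraction/section an overlap matching sending $(I',a')$ back to $(I,a)$, so that $I$ and $I'$ overlap each other above in both directions, forcing $I=I'$. One tiny remark: in the converse direction, monicity of $\sigma$ is what guarantees that \emph{every} $(I,a)\in B$ appears in $\sigma'\bullet\sigma$ (the matching property alone already rules out non-identity pairs), rather than serving the uniqueness role you attribute to it.
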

\begin{proof}
If $((I,a),(I',a'))\in\sigma$ implies $I=I'$ for some overlap matching $\sigma$, then its opposite matching $\sigma'$ is again an overlap matching. If $\sigma$ is epi, this yields a right inverse and if $\sigma$ is mono, this yields a left inverse.

If on the other hand $\sigma$ is split mono or split epi, there needs to be an overlap matching $\tau$ with $((I',a'),(I,a))\in\tau$ whenever $((I,a),(I',a'))\in\sigma$. Since both $\sigma$ and $\tau$ are overlap matchings, $I$ and $I'$ overlap each other above, so we have $I=I'$ whenever $((I,a),(I',a'))\in\sigma$.
\end{proof}

\subsection{Dualization for Persistence Modules}\label{subsec:dualization}
We have seen persistent homology and cohomology as examples of persistence modules indexed by either $(T,\leq)$ or $(T,\geq)$. In general, the following yields a way of translating between the two.

\begin{defi}
We define the contravariant \emph{dualization} functor $\dual{(-)}\colon\mathbf{Vec^{T}}\to\mathbf{Vec^{T^{\mathrm{op}}}}$ by applying vector space dualization pointwise, i.e., for a $\mathbf{T}$-indexed persistence module $M$, its \emph{dual} $\dual{M}$ is the $\mathbf{T^{\mathrm{op}}}$-indexed persistence module given by $\dual{M}_{t}=\Hom(M_{t},\mathbb{F})$ for all $t\in T$.
\end{defi}

Note that a subset $I\subseteq T$ is an interval with respect to $\leq$ if and only if it is an interval with respect to $\geq$. This yields an obvious contravariant isomorphism between $\mathbf{Barc(T)}$ and $\mathbf{Barc(T^{\mathrm{op}})}$ which maps each barcode to itself. Thus, we can compare barcodes of persistence modules indexed by $(T,\leq)$ with barcodes of persistence modules indexed by $(T,\geq)$. As such, we have the following well-known fact.

\begin{lem}\label{lem:dual_barcode}
Let $M$ be a p.f.d. persistence module. Then $B$ is a barcode for $M$ if and only if it is a barcode for $\dual{M}$.
\end{lem}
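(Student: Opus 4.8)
The plan is to reduce the statement to the structure theorem and the behavior of dualization on interval modules. First I would recall that a p.f.d.\ persistence module $M$ admits an interval decomposition $M\cong\bigoplus_{(I,a)\in B}C(I)$ for some barcode $B$, by the structure theorem cited after \cref{defi:interval_decomp} together with \cref{prop:passing_to_modules} (so that $M\cong\mathcal M(B)$); by \cref{thm:krull_schmidt} the barcode $B$ is determined up to isomorphism. Note also that each $C(I)$ is p.f.d.\ (one-dimensional on $I$), so the direct sum is a pointwise direct sum of finite-dimensional spaces and dualization commutes with it pointwise: $\dual M\cong\bigoplus_{(I,a)\in B}\dual{C(I)}$.

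The key computation is then that $\dual{C(I)}\cong C(I)$ as a $\mathbf{T^{\mathrm{op}}}$-indexed persistence module. This is immediate from the definition of the interval module: $C(I)_t=\mathbb F$ for $t\in I$ and $0$ otherwise, with identity structure maps on $I$; applying $\Hom(-,\mathbb F)$ pointwise gives $\dual{C(I)}_t=\mathbb F$ for $t\in I$, and the transpose of the identity is again the identity, while the transpose of the zero map is zero. Since the definition of an interval of $T$ does not depend on the direction of the order (as observed just before \cref{lem:dual_barcode}), $C(I)$ viewed over $\mathbf{T^{\mathrm{op}}}$ is literally the interval module for the same interval $I$. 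Hence $\dual{C(I)}\cong C(I)$, and therefore $\dual M\cong\bigoplus_{(I,a)\in B}C(I)=\mathcal M(B)$ over $\mathbf{T^{\mathrm{op}}}$, using the identification of $\mathbf{Barc(T)}$ with $\mathbf{Barc(T^{\mathrm{op}})}$.

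Putting these together: if $B$ is a barcode for $M$, then $M\cong\mathcal M(B)$, so $\dual M\cong\dual{\mathcal M(B)}\cong\mathcal M(B)$, i.e.\ $B$ is a barcode for $\dual M$. For the converse, I would apply the forward direction to $\dual M$ — which is again p.f.d.\ since $M$ is — to get that $B$ is a barcode for $\dual{(\dual M)}$; using the natural isomorphism $\dual{(\dual M)}\cong M$ for p.f.d.\ modules (finite-dimensional double duals), $B$ is a barcode for $M$. Alternatively one can argue directly by the symmetry of the statement in $M$ and $\dual M$.

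I do not expect a genuine obstacle here; the only points requiring a little care are (i) checking that pointwise vector space dualization commutes with the (pointwise) direct sum defining $\mathcal M(B)$, which is fine because each summand is pointwise finite-dimensional, and (ii) being explicit that an interval of $(T,\le)$ is the same as an interval of $(T,\ge)$ so that "the same barcode $B$" makes sense on both sides — but this is exactly the remark preceding the lemma. The rest is bookkeeping with the structure theorem and Krull--Schmidt.
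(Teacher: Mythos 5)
The paper does not actually prove this lemma; it is stated as a ``well-known fact'' immediately after the observation that intervals of $(T,\le)$ and $(T,\ge)$ coincide, so there is nothing to compare against line by line. Your argument is the standard one and is correct: decompose $M\cong\mathcal M(B)$ by the structure theorem, observe $\dual{C(I)}\cong C(I)$ over $\mathbf{T^{\mathrm{op}}}$, and use double duality (or symmetry) for the converse; uniqueness of the barcode via \cref{thm:krull_schmidt} is rightly invoked so that ``the'' barcode of $\dual M$ is well defined.

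One justification deserves tightening. You write that dualization commutes with the direct sum because it is ``a pointwise direct sum of finite-dimensional spaces,'' but dualization does \emph{not} commute with infinite direct sums of finite-dimensional spaces in general ($\dual{(\bigoplus_n\mathbb F)}\cong\prod_n\mathbb F$). The correct reason is that $M$ itself is p.f.d., so at each index $t$ only finitely many summands $C(I)_t$ are nonzero (only finitely many bars, counted with multiplicity, contain $t$); hence at each $t$ the sum is finite and $\dual{(\bigoplus_{(I,a)}C(I)_t)}\cong\bigoplus_{(I,a)}\dual{C(I)_t}$ naturally, compatibly with the (dualized) structure maps. With that fix the proof is complete.
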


Recall that $\mathbb{F}$ is injective as a module over itself, which means that the contravariant functor $\Hom(-,\mathbb{F})\colon\mathbf{Vec}\to\mathbf{Vec}$ is exact. As pointwise application of an exact functor yields an exact functor of diagram categories, we get the following.

\begin{lem}
\label{lem:dual_maps}
The dualization functor $\dual{(-)}$ is exact.
In particular, a morphism $\varphi\colon M\to N$ of persistence modules yields isomorphisms
\begin{align*}
\dual{(\ker\varphi)}&\cong\coker\dual{\varphi} , &
\dual{(\im\varphi)}&\cong\im\dual{\varphi} , &
\dual{(\coker\varphi)}&\cong\ker\dual{\varphi} .
\end{align*}
\end{lem}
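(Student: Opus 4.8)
The plan is to deduce \Cref{lem:dual_maps} from two facts: first, the general principle that pointwise application of an exact functor between abelian categories yields an exact functor between the corresponding diagram categories; and second, the classical fact that $\Hom(-,\mathbb{F})\colon\mathbf{Vec}\to\mathbf{Vec}$ is exact because $\mathbb{F}$ is injective over itself. Both of these are stated in the paragraph immediately preceding the lemma, so the proof is essentially an unwinding of definitions together with a standard diagram-chase argument.

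First I would recall that a contravariant functor is called exact if it sends short exact sequences to short exact sequences, equivalently if it is both left and right exact. Since $\mathbf{Vec^{T}}$ and $\mathbf{Vec^{T^{\mathrm{op}}}}$ have kernels and cokernels computed pointwise (as noted after \Cref{defi:persistence_module}), and $\dual{(-)}$ is defined by applying $\Hom(-,\mathbb{F})$ pointwise, exactness of $\dual{(-)}$ follows immediately from exactness of $\Hom(-,\mathbb{F})$: a sequence $M\to N\to P$ in $\mathbf{Vec^{T}}$ is exact if and only if $M_t\to N_t\to P_t$ is exact for every $t\in T$, and applying the exact functor $\Hom(-,\mathbb{F})$ preserves this pointwise, hence preserves exactness of the diagram-level sequence.

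Next I would derive the three displayed isomorphisms. Given a morphism $\varphi\colon M\to N$, consider the short exact sequences $0\to\ker\varphi\to M\to\im\varphi\to 0$ and $0\to\im\varphi\to N\to\coker\varphi\to 0$ in $\mathbf{Vec^{T}}$. Applying the contravariant exact functor $\dual{(-)}$ reverses arrows and preserves exactness, yielding $0\to\dual{(\im\varphi)}\to\dual{M}\to\dual{(\ker\varphi)}\to 0$ and $0\to\dual{(\coker\varphi)}\to\dual{N}\to\dual{(\im\varphi)}\to 0$. Comparing these with the canonical short exact sequences associated to the morphism $\dual{\varphi}\colon\dual{N}\to\dual{M}$, namely $0\to\ker\dual{\varphi}\to\dual{N}\to\im\dual{\varphi}\to 0$ and $0\to\im\dual{\varphi}\to\dual{M}\to\coker\dual{\varphi}\to 0$, and using that $\dual{(-)}$ sends the factorization $M\twoheadrightarrow\im\varphi\hookrightarrow N$ to the factorization $\dual{N}\twoheadrightarrow\dual{(\im\varphi)}\hookrightarrow\dual{M}$, uniqueness of epi-mono factorizations gives $\dual{(\im\varphi)}\cong\im\dual{\varphi}$, $\dual{(\ker\varphi)}\cong\coker\dual{\varphi}$, and $\dual{(\coker\varphi)}\cong\ker\dual{\varphi}$.

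I do not expect a serious obstacle here; the statement is standard and the tools are all in place. The only point requiring a modicum of care is making sure the identifications of kernels, images, and cokernels are natural rather than merely abstract isomorphisms — this is handled by tracking the universal properties through the contravariant functor, i.e., observing that $\dual{(-)}$ turns the defining limit/colimit diagram for $\ker\varphi$ into the defining colimit/limit diagram for $\coker\dual{\varphi}$ pointwise, and likewise for the others. Given the brevity expected of such a lemma, a one- or two-sentence proof citing the preceding remarks would suffice.
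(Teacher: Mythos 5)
Your proposal is correct and follows exactly the paper's route: the paper proves this lemma implicitly via the paragraph preceding it, noting that $\Hom(-,\mathbb{F})$ is exact because $\mathbb{F}$ is injective over itself and that pointwise application of an exact functor gives an exact functor on diagram categories, with the three isomorphisms then following from exactness. Your more detailed unwinding of the short exact sequences and the epi-mono factorization is a faithful elaboration of what the paper leaves to the reader.
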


\section{Lifespan Functors}\label{sec:lifespan}
In \cref{subsec:lifespan}, we will construct what we call the lifespan functors based on (co)units of the adjunction of the diagonal functor with (co)limits. We then establish conditions for when the lifespan functors commute with the image, kernel, and cokernel functors in \cref{subsec:lifespan_images}.

\subsection{Defining Lifespan Functors}\label{subsec:lifespan}
Let $\mathbf{A}$ be any category with $\mathbf{T}$-shaped limits and colimits, so that we get functors $\lim\colon\mathbf{A^T}\to\mathbf{A}$ and $\colim\colon\mathbf{A^T}\to\mathbf{A}$. As for any functor category, we also have a diagonal functor $\Delta\colon\mathbf{A}\to\mathbf{A^T}$, mapping each object to the corresponding constant diagram. Of course, this setting includes the case where $\mathbf{A}=\mathbf{Vec}$. 

For each object $V$ in $\mathbf{A^T}$, the canonical maps $V_t \to \colim V$ for $t\in T$ form a natural transformation $\colimUnit_V\colon V \to \Delta\colim V.$
Recall that $\colim$ is left adjoint to the diagonal functor $\Delta$, and the morphism $\colimUnit_V$ is the component at $V$ for the unit $\colimUnit \colon \id_{\mathbf{A^T}} \to \Delta \circ \colim$ of the adjunction $\colim\dashv\Delta$. 
Similarly, the canonical maps $\lim V \to V_t$ give a natural transformation $\limCounit_V\colon \Delta\lim V \to V,$ which is the counit $\limCounit \colon \Delta \circ \lim \to \id_{\mathbf{A^T}}$ of the adjunction $\Delta\dashv\lim$.
We thus get the diagram
\[
\begin{tikzcd}%
\Delta\lim V \ar[r,"\limCounit_V"] & V \ar[r,"\colimUnit_V"] & \Delta\colim V .
\end{tikzcd} 
\] 
From now on, we assume that $\mathbf{A}$ is Puppe-exact, so that we can form kernels, cokernels, and images.

\begin{defi}\label{defi:lifespan_1}
We define the following functors $\mathbf{A^T}\to\mathbf{A^T}$.
\begin{enumerate}
	\item The \emph{mortal part} functor is defined as $\deathPart{(-)}=\ker\colimUnit_{(-)}$.  
	\item The \emph{immortal part} functor is defined as $\immortalPart{(-)}=\im\colimUnit_{(-)}$.
	\item The \emph{nascent part} functor is defined as $\birthPart{(-)}=\coker\limCounit_{(-)}$.  
	\item The \emph{ancient part} functor is defined as $\ancientPart{(-)}=\im\limCounit_{(-)}$.
\end{enumerate}
\end{defi}
\goodbreak

By definition, for each object $V$ in $\mathbf{A^T}$ we get 
a natural diagram 
\[
\begin{tikzcd}[row sep={5ex,between origins},column sep={5ex,between origins}]
\deathPart{V}%
 \arrow[rd, hook] & & \birthPart{V}%
 \\
& V \arrow[ru, two heads] \arrow[rd, two heads] & \\
\ancientPart{V}%
\arrow[ru, hook] & & \immortalPart{V} %
\end{tikzcd} 
\] 
with diagonal short exact sequences. 
We also get composite natural transformations 
\[
\deathPart{(-)}\to\id_{\mathbf{A^T}}\to\birthPart{(-)}
\qquad
\text{and}
\qquad
\ancientPart{(-)}\to\id_{\mathbf{A^T}}\to\immortalPart{(-)}
\]
and can again form kernels, cokernels and images to get new functors.

\begin{defi}\label{defi:lifespan_2}
We define the following functors $\mathbf{A^T}\to\mathbf{A^T}$.
\begin{enumerate}
	\item The \emph{finite part} functor is defined as $\finitePart{(-)}=\im(\deathPart{(-)}\to\birthPart{(-)})$.
	\item The \emph{constant part} functor is defined as $\constantPart{(-)}=\im(\ancientPart{(-)}\to\immortalPart{(-)})$.
\end{enumerate}
\end{defi}

\begin{rem}\label{rem:constanPart}
The universal property of epi-mono-factorizations implies that we have a canonical isomorphism $\constantPart{V}\cong\im(\Delta\lim V\to \Delta\colim V)$ for all $V$.
\end{rem}

We will also form kernels and cokernels of the above composite morphisms. In the cases that we are interested in, these turn out to be the same: Following \cite[Lemma~2.2.4]{MR2977522}, pullbacks of monos and pushouts of epis exist in p-exact categories, and we have canonical isomorphisms
\begin{align*}
\ker(\deathPart{V}\to\birthPart{V})\cong \deathPart{V}&\times_{V}\ancientPart{V}\cong\ker(\ancientPart{V}\to\immortalPart{V})\\
\coker(\deathPart{V}\to\birthPart{V})\cong \birthPart{V}&+_{V}\immortalPart{V}\cong\coker(\ancientPart{V}\to\immortalPart{V})
\end{align*}
for any $V$. Using this fact, we can make the following well-posed definition.

\begin{defi}
We define the following functors $\mathbf{A^T}\to\mathbf{A^T}$.
\begin{enumerate}
	\item The \emph{ancient mortal part} functor is defined as 
	\[\ancientDeathPart{(-)}=\ker(\deathPart{(-)}\to\birthPart{(-)})=\ker(\ancientPart{(-)}\to\immortalPart{(-)}).\]
	\item The \emph{immortal nascent part} functor is defined as 
	\[\immortalBirthPart{(-)}=\coker(\deathPart{(-)}\to\birthPart{(-)})=\coker(\ancientPart{(-)}\to\immortalPart{(-)}).\]
	\end{enumerate}
\end{defi}

We give a common name to all the functors defined above.

\begin{defi}\label{defi:lifespan_diagram}
For an object $V$ in $\mathbf{A^T}$, we will call the diagram %
\[
\small
\begin{tikzcd}[row sep={6ex,between origins},column sep={6ex,between origins}]
&&\finitePart{V}\arrow[dr,hook]&&\\
&\deathPart{V} \arrow[ur,two heads]\arrow[rd, hook] & &\birthPart{V}\arrow[dr,two heads] &\\
\ancientDeathPart{V}\arrow[ur,hook]\arrow[dr,hook]&& V \arrow[ru, two heads] \arrow[rd, two heads] &&\immortalBirthPart{V}\\
&\ancientPart{V}\arrow[dr,two heads] \arrow[ru, hook] & &\immortalPart{V}\arrow[ur,two heads]&\\
&&\constantPart{V}\arrow[ur,hook]&&
\end{tikzcd} 
\] 
the \emph{lifespan diagram of $V$}. We call the functors at the nodes of the diagram \emph{lifespan functors} and the natural maps between them \emph{lifespan transformations}.
\end{defi}

Note that the lifespan diagram simplifies to a smaller diagram in many applications. For example, the short exact sequence $\ancientDeathPart{V} \hookrightarrow \ancientPart{V} \twoheadrightarrow \constantPart{V}$ on the bottom left vanishes if $V$ is bounded below.
Similarly, the bottom right sequence vanishes if $V$ is bounded above. For the top left and the top right short exact sequences in the lifespan diagram, we have the following conditions.

\begin{prop}\label{prop:vanishing_parts}
Consider $\lim,\colim\colon\mathbf{A^{T}}\to\mathbf{A}$ and an object $V$ in $\mathbf{A^{T}}$.
\begin{enumerate}
\item If $\colim$ is exact, then $\deathPart{V}=0$ if and only if all structure maps of $V$ are mono.
\item If $\lim$ is exact, then $\birthPart{V}=0$ if and only if all structure maps of $V$ are epi.
\end{enumerate}
\end{prop}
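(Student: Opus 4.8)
The plan is to prove both statements symmetrically, so I will focus on part~(1) and obtain part~(2) by the dual argument (replacing $\colim$ by $\lim$, $\colimUnit$ by $\limCounit$, kernels by cokernels, and reversing the order on $T$). Recall $\deathPart{V}=\ker\colimUnit_V$ where $\colimUnit_V\colon V\to\Delta\colim V$. Since kernels in $\mathbf{A^T}$ are computed pointwise, $\deathPart{V}=0$ if and only if $(\colimUnit_V)_t\colon V_t\to\colim V$ is mono for every $t\in T$.

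For the ``if'' direction, suppose all structure maps of $V$ are mono. I want to show each $(\colimUnit_V)_t$ is mono. The key observation is that, since $T$ is totally ordered (hence $\mathbf{T}$ is a filtered category), the colimit is a \emph{filtered} colimit; and exactness of $\colim$ means it preserves monos. Concretely, fix $t$ and consider the subdiagram of $V$ supported on indices $\geq t$; call it $V_{\geq t}$. Its colimit agrees with $\colim V$ because $\{s\in T\mid s\geq t\}$ is cofinal in $T$. The canonical map $V_t\to\colim V_{\geq t}$ can be realized as the colimit of the monomorphisms $V_t\to V_s$ (for $s\geq t$), viewed as a morphism from the constant diagram $\Delta V_t$ into $V_{\geq t}$; since $\colim$ is exact it sends this pointwise-mono morphism of diagrams to a mono, and $\colim\Delta V_t\cong V_t$ because $\mathbf{T}$ is connected. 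Hence $(\colimUnit_V)_t$ is mono, and $\deathPart{V}=0$.

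For the ``only if'' direction, suppose $\deathPart{V}=0$, i.e.\ every $(\colimUnit_V)_t$ is mono. Given a structure map $V_{t,u}\colon V_t\to V_u$ with $t\leq u$, it factors as $V_t\xrightarrow{V_{t,u}} V_u\xrightarrow{(\colimUnit_V)_u}\colim V$, and this composite is exactly $(\colimUnit_V)_t$, which is mono; since a morphism whose postcomposition with something is mono must itself be mono, $V_{t,u}$ is mono. I expect the main obstacle to be the cofinality/connectedness bookkeeping in the ``if'' direction: one must carefully justify that $\colim V\cong\colim V_{\geq t}$ and that $\colim\Delta V_t\cong V_t$, and then package the family $(V_{t,u})_{u\geq t}$ as a single morphism of diagrams to which exactness of $\colim$ applies. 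An alternative that sidesteps the cofinality argument is to work one pair of indices at a time: exactness of $\colim$ applied to the diagram morphism $\Delta V_t\to V$ (with components $V_{t,u}$ for $u\geq t$ and zero otherwise is not functorial, so this needs the cofinal restriction) — so in practice the restriction to $V_{\geq t}$ is the clean route, and that is the step to write out with care.
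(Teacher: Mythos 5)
Your argument is essentially the paper's: the ``only if'' direction is identical (the composite $V_t\to V_u\to\colim V$ equals the canonical map $(\colimUnit_V)_t$, which is mono, so the structure map is mono), and the ``if'' direction rests on the same idea of exhibiting $(\colimUnit_V)_t\colon V_t\to\colim V$ as the image under $\colim$ of a pointwise-mono morphism of diagrams. The one step that does not quite work as written is the restriction to $V_{\geq t}$: the hypothesis is that $\colim\colon\mathbf{A^T}\to\mathbf{A}$ is exact, whereas your morphism $\Delta V_t\to V_{\geq t}$ lives in the category of diagrams indexed by $\{s\in T\mid s\geq t\}$, so you are invoking exactness of a \emph{different} colimit functor, which was not assumed (and would itself require a cofinality argument to deduce from the hypothesis). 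The paper sidesteps this by going the other way: instead of restricting $V$, it extends the truncated diagram, defining $\tilde V\in\mathbf{A^T}$ by $\tilde V_s=V_s$ for $s<t$ and $\tilde V_u=V_t$ for $u\geq t$. The evident morphism $\tilde V\to V$ (identities below $t$, structure maps above) is pointwise mono, $\colim\tilde V\cong V_t$ since the diagram is constant on the cofinal subset $\{u\geq t\}$, and exactness of the hypothesized functor applies directly to show $V_t\to\colim V$ is mono. Your proof becomes correct if you replace the restriction $V_{\geq t}$ by such an extension back to a $\mathbf{T}$-indexed diagram (the paper's $\tilde V$, or $\Delta V_t$ extended by zero below $t$); everything else, including the duality reduction for part~(2), matches the paper.
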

\begin{proof}
We only show the first statement since the second one is dual to it. So, assume that taking colimits is exact.

If $\deathPart{V}=0$, then $V\to\Delta\colim V$ is mono, i.e., $V_{t}\to\colim V$ is mono for any $t\in T$. Now, for any structure map $V_{t}\to V_{u}$, we obtain that the composition $V_{t}\to V_{u}\to \colim V$ is mono since it is equal to the natural map $V_{t}\to\colim V$. This implies that $V_{t}\to V_{u}$ is mono.

Next, assume that all structure maps of $V$ are mono and let $t\in T$. Define an object $\tilde{V}$ in $\mathbf{A^{T}}$ by setting $\tilde{V}_{s}=V_{s}$ for any $s<t$ and $\tilde{V}_{u}=V_{t}$ for any $u\geq t$. There is an obvious map $\tilde{V}\to V$ consisting of structure maps of $V$ and because we assume these structure maps to be mono the map $\tilde{V} \to V$ is mono, too. We assume that taking colimits is exact, so the induced map $\colim\tilde{V}\to\colim V$ is still mono. But $\colim\tilde{V}$ is $V_{t}$ and the induced map is given by the natural map $V_{t}\to\colim V$. Hence, $V\to\Delta\colim V$ is mono, which implies $\deathPart{V}=0$.
\end{proof}

The construction of the lifespan functors involves kernels, cokernels, and images of the natural transformations $\limCounit$ and $\colimUnit$. 
Note, however, that we have not used $\ker\limCounit_{(-)}$ and $\coker\colimUnit_{(-)}$ so far.
These play a somewhat different role than the lifespan functors, as they do not yield subobjects or quotients of the object we start with.
Still, their properties will be of similar importance.

\begin{defi}\label{defi:two_more_functors}
We define the following functors $\mathbf{A^T}\to\mathbf{A^T}$.
\begin{enumerate}
	\item The \emph{ghost complement} functor is defined as $\ghostlikePart{(-)}=\ker\limCounit_{(-)}$.  
	\item The \emph{unborn complement} functor is defined as $\unbornPart{(-)}=\coker\colimUnit_{(-)}$.  
\end{enumerate}
\end{defi}

\subsection{Lifespan Functors and Images, Kernels, and Cokernels}\label{subsec:lifespan_images}
One of our overall goals is to study images, kernels, and cokernels of morphisms in persistent homology. For that purpose, we want to study how the lifespan functors appearing in the statement of \cref{prop:abs_rel_single_filtration} behave with respect to these operations. The relevant theorems hold in the general setting, so, as before, let $\mathbf{A}$ be p-exact with $\mathbf{T}$-indexed limits and colimits.

\begin{ex}
The following examples show that the nascent and mortal part do not preserve images. For both examples, let the index set be $\mathbb{Z}$.
\begin{enumerate}
	\item Consider a morphism $\varphi\colon C([0,+\infty))\to C([0,1])$ which has maximal rank everywhere, e.g., by taking $\varphi_0$ and $\varphi_1$ to be identities and all other maps $0$. Clearly, $\varphi$ is epi and in particular 
	$
	\deathPart{(\im\varphi)}=\deathPart{C([0,1])}=C([0,1]).
	$
	However, we have $\deathPart{C([0,+\infty))}=0$, so $\im\deathPart{\varphi}=0$ and thus $\im\deathPart{\varphi}\neq \deathPart{(\im\varphi)}$.
	\item Now let $\varphi\colon C([-1,0])\to C((-\infty,0])$ be of maximal rank everywhere. By a similar argument, we get $\im\birthPart{\varphi}=0$ but $\birthPart{(\im\varphi)}=C([-1,0])$.
\end{enumerate}
\end{ex}

While the preservation of images fails in general, there are classes of morphisms for which we get the desired result. 
We start with a lemma.

\begin{lem}\label{lem:birth_epi_death_mono}
Let $V$ and $W$ be objects in $\mathbf{A^{T}}$ and $\varphi\colon V\to W$ a morphism.
\begin{enumerate}
\item 
	If $\varphi$ is epi, then $\birthPart{\varphi}$ is epi;
	if $\varphi$ is mono, then $\deathPart{\varphi}$ is mono.
\item
	If $\lim\varphi$ is epi, then $\ancientPart{\varphi}$ is epi;
	if $\colim\varphi$ is mono, then $\immortalPart{\varphi}$ is mono.
\end{enumerate}
\end{lem}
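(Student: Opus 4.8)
The statement has two parts, each with two dual halves, so it suffices to prove one representative half of each and invoke duality (the other follows by passing to $\mathbf{A}^{\mathrm{op}}$, which swaps $\lim \leftrightarrow \colim$, mono $\leftrightarrow$ epi, $\deathPart{(-)} \leftrightarrow \birthPart{(-)}$, and $\ancientPart{(-)} \leftrightarrow \immortalPart{(-)}$). For part (1), I would prove: if $\varphi$ is epi then $\birthPart{\varphi}$ is epi. For part (2), I would prove: if $\lim\varphi$ is epi then $\ancientPart{\varphi}$ is epi.

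\textbf{Part (1).} Recall $\birthPart{(-)} = \coker\limCounit_{(-)}$, so $\birthPart{\varphi}$ is the induced map on cokernels in the commuting square whose vertical arrows are $\limCounit_V\colon \Delta\lim V \to V$ and $\limCounit_W \colon \Delta\lim W \to W$, and whose horizontal arrows are $\Delta\lim\varphi$ and $\varphi$. The plan is to use the standard fact in a p-exact (or abelian) category that in such a square, if the right-hand vertical composite $V \to W \to \birthPart{W}$ is epi, then the induced map $\birthPart{V} \to \birthPart{W}$ is epi. Indeed $\birthPart{V} = \coker\limCounit_V$ comes with an epi $V \twoheadrightarrow \birthPart{V}$, and the square gives a commuting diagram $V \to \birthPart{V} \to \birthPart{W}$ equal to $V \to W \to \birthPart{W}$; since $\varphi$ is epi and $W \twoheadrightarrow \birthPart{W}$ is epi, their composite is epi, hence it factors through the epi $V \twoheadrightarrow \birthPart{V}$ as an epi followed by $\birthPart{\varphi}$, forcing $\birthPart{\varphi}$ to be epi (an epi that factors as $g\circ f$ forces $g$ epi). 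This uses only that epis compose, that epis are cokernels in a p-exact category, and the universal property of the cokernel; no exactness of $\lim$ is needed here, matching the hypotheses.

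\textbf{Part (2).} Here $\ancientPart{(-)} = \im\limCounit_{(-)}$, so $\ancientPart{\varphi}$ is the map on images induced by the square above. The key point is that $\ancientPart{V}$ is the image of the composite $\Delta\lim V \xrightarrow{\Delta\lim\varphi}$ ... wait — more precisely, from the commuting square, the composite $\Delta\lim V \to V \to W$ equals $\Delta\lim V \to \Delta\lim W \to W$, so its image in $W$ is simultaneously (the image of) $\limCounit_W \circ \Delta\lim\varphi$ and a quotient of $\ancientPart{V}$ mapping into $\ancientPart{W} = \im\limCounit_W$. The plan is: by hypothesis $\Delta\lim\varphi$ is epi (since $\Delta$ is exact, applied pointwise), so $\limCounit_W \circ \Delta\lim\varphi$ has the same image as $\limCounit_W$, namely all of $\ancientPart{W}$; but this composite also equals $\varphi \circ \limCounit_V$, which factors through $\ancientPart{V} = \im\limCounit_V$, giving $\ancientPart{V} \to \ancientPart{W}$ that is epi because the total map onto $\ancientPart{W}$ is epi and factors through it. Concretely I would factor $\limCounit_V$ as $\Delta\lim V \twoheadrightarrow \ancientPart{V} \hookrightarrow V$, so $\varphi\circ\limCounit_V$ factors as $\Delta\lim V \twoheadrightarrow \ancientPart{V} \to \ancientPart{W} \hookrightarrow W$ (the middle being $\ancientPart{\varphi}$), and since the left-to-middle-to-right-vertical composite onto $\ancientPart{W}$ is epi (being $\limCounit_W\circ\Delta\lim\varphi$ after identifying images), $\ancientPart{\varphi}$ is epi.

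\textbf{Main obstacle.} The routine steps are just diagram chasing with epi-mono factorizations, so the main thing to get right is the bookkeeping of \emph{which} composite equals \emph{which}, and being careful that in a p-exact category one may only assume finite limits (pullbacks of monos, pushouts of epis) — but since everything here reduces to "a composite is epi, hence the second factor is epi" and "$\Delta$ and $\Delta\lim$ commute with $\varphi$ by naturality of $\limCounit$," no problematic (co)limits are invoked. I would double-check at the start that $\mathbf{A}^{\mathrm{op}}$ being p-exact with $\mathbf{T}$-indexed (co)limits is immediate, so that the duality reduction is legitimate, and that under it $\ghostlikePart{(-)}$-type subtleties do not arise since we only use the four functors $\deathPart{(-)},\immortalPart{(-)},\birthPart{(-)},\ancientPart{(-)}$, which are interchanged cleanly.
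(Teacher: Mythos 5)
Your proposal is correct and follows essentially the same route as the paper: for the nascent part you form the commutative square with the canonical epis $V\twoheadrightarrow\birthPart{V}$, $W\twoheadrightarrow\birthPart{W}$, observe the composite $V\to\birthPart{W}$ is epi, and conclude $\birthPart{\varphi}$ is epi; the paper then dismisses the remaining three cases as "analogous," which is exactly the duality/epi-mono-factorization bookkeeping you spell out for the ancient part. The only detail worth a sanity check in your duality reduction is that passing to $\mathbf{A}^{\mathrm{op}}$ also reverses the indexing poset to $\mathbf{T}^{\mathrm{op}}$, which is harmless since the argument uses no special property of $T$.
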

\begin{proof}
Assume $\varphi$ is epi. Note that the canonical map $P\twoheadrightarrow\birthPart{P}$ is also epi for any $P$. We get a commutative diagram
\[
\begin{tikzcd}
V\arrow[r,"\varphi", two heads]\arrow[d, two heads] & W\arrow[d, two heads]\\
\birthPart{V}\arrow[r,"\birthPart{\varphi}"] & \birthPart{W}
\end{tikzcd}
\]
where the composition $V\to\birthPart{W}$ is epi. Thus, $\birthPart{\varphi}$ must be epi, too. The other assertions can be shown analogously.
\end{proof}

\begin{thm}\label{thm:image_parts}
Let $V$ and $W$ be objects in $\mathbf{A^{T}}$ and $\varphi\colon V\to W$ a morphism.
\begin{enumerate}

\item If $\ker\colim\varphi=0$, we have canonical isomorphisms
\begin{align*}
\ker\deathPart{\varphi}&\cong\ker\varphi , &
\ker\immortalPart{\varphi}&=0 , \\
\im\deathPart{\varphi}&\cong\deathPart{(\im\varphi)} , &
\im\immortalPart{\varphi}&\cong\immortalPart{(\im\varphi)}\cong\immortalPart{V} .
\end{align*}

\item If $\coker\lim\varphi=0$, we have canonical isomorphisms
\begin{align*}
\coker\birthPart{\varphi}&\cong\coker\varphi , &
\coker\ancientPart{\varphi}&= 0 , \\
\im\birthPart{\varphi}&\cong\birthPart{(\im\varphi)} , &
\im\ancientPart{\varphi}&\cong\ancientPart{(\im\varphi)}\cong\ancientPart{W}.
\end{align*}
\end{enumerate}
\end{thm}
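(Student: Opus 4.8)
The plan is to prove part~(1) and obtain part~(2) by the evident dual argument: replace $\colim$, $\colimUnit$, $\deathPart{(-)}$, $\immortalPart{(-)}$ by $\lim$, $\limCounit$, $\birthPart{(-)}$, $\ancientPart{(-)}$, interchange the roles of monos and epis and of kernels and cokernels, and use that $\lim$ is a right adjoint (hence left exact, preserving monos and kernels) where the proof of~(1) uses that $\colim$ is a left adjoint (hence right exact). Note that the image factorization $\varphi = \iota\circ\pi$ is processed "from the opposite end" in the dual, which is why $\ancientPart{W}$ appears in~(2) at the spot where $\immortalPart{V}$ appears in~(1).

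The backbone is the naturality square of $\colimUnit$: for any $\psi\colon P\to Q$ the square with top row $\colimUnit_P$, bottom row $\colimUnit_Q$, and vertical maps $\psi$ and $\Delta\colim\psi$ commutes, and $\deathPart\psi$, $\immortalPart\psi$ are precisely the morphisms it induces on kernels and on images. If $\ker\colim\varphi=0$, equivalently $\Delta\colim\varphi$ is mono, then $\ker\immortalPart\varphi=0$ already by \cref{lem:birth_epi_death_mono}, while $\ker\deathPart\varphi\cong\ker\varphi$ because $\Delta\colim\varphi$ being mono forces $\ker\varphi\hookrightarrow V$ to factor through $\ker\colimUnit_V=\deathPart V$ and $\deathPart\varphi$ is just the restriction of $\varphi$ to $\deathPart V$. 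This settles the two kernel identities.

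For the image identities I would factor $\varphi=\iota\circ\pi$ through its image, with $\pi\colon V\twoheadrightarrow\im\varphi$ epi and $\iota\colon\im\varphi\hookrightarrow W$ mono. Since $\colim$ is a left adjoint it preserves cokernels and hence epis, so $\colim\pi$ is epi; as $\colim\varphi=\colim\iota\circ\colim\pi$ is mono by hypothesis, $\colim\pi$ is both mono and epi, hence an isomorphism in the p-exact category $\mathbf A$, and therefore $\colim\iota$ is mono. Then \cref{lem:birth_epi_death_mono} gives that $\deathPart\iota$ and $\immortalPart\iota$ are mono. On the other factor, since $\colim\pi$ is an isomorphism, a short diagram chase — comparing the kernels and images occurring in the naturality square of $\colimUnit$ for $\pi$, or equivalently applying the snake lemma to the induced morphism of short exact sequences $\deathPart{(-)}\hookrightarrow(-)\twoheadrightarrow\immortalPart{(-)}$ — shows that $\deathPart\pi$ is epi and $\immortalPart\pi$ is an isomorphism. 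Assembling: $\deathPart\varphi=\deathPart\iota\circ\deathPart\pi$ is an epi followed by a mono, so $\im\deathPart\varphi\cong\im\deathPart\iota\cong\deathPart{(\im\varphi)}$; and $\immortalPart\varphi=\immortalPart\iota\circ\immortalPart\pi$ with $\immortalPart\pi$ an isomorphism and $\immortalPart\iota$ mono gives $\im\immortalPart\varphi\cong\immortalPart{(\im\varphi)}$, which the isomorphism $\immortalPart\pi$ further identifies with $\immortalPart V$.

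The step I expect to demand the most care is carrying out these diagram chases purely inside the p-exact functor category, where one has no elements available: each move of the form "image of a preimage" or "kernel of a composite" must be justified through pullbacks of monos, pushouts of epis, and the uniqueness of epi-mono factorizations, or by invoking the snake lemma in the p-exact setting, much as in the background recalled in \cref{sec:prelim}. By contrast, the two image factorizations, the bookkeeping of which adjunction yields which exactness, and the final assembly are all routine once the refined kernel observation and the isomorphy of $\colim\pi$ (resp. its dual for $\lim$) are in place.
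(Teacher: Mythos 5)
Your proposal is correct and follows essentially the same route as the paper's proof: the kernel statements via \cref{lem:birth_epi_death_mono}, the image statements via the epi-mono factorization $\varphi=\iota\circ\pi$, the observation that $\colim\pi$ is an isomorphism, the snake lemma in the p-exact setting to see that $\deathPart{\pi}$ is epi, and duality for part (2). The only cosmetic differences are that the paper gets $\ker\deathPart{\varphi}\cong\ker\varphi$ from the left-exact sequence of kernels induced by the lifespan short exact sequences, and gets $\immortalPart{V}\cong\immortalPart{(\im\varphi)}$ from uniqueness of the epi-mono factorizations of $\colimUnit_V$ and $\colimUnit_{\im\varphi}$ rather than from $\immortalPart{\pi}$ being an isomorphism; both variants are sound.
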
 
\begin{proof}
We only show the first part of the theorem, the second one being completely dual.
First, assume that $\ker\colim\varphi=0$, i.e., $\colim\varphi$ is mono. Taking kernels is left exact, so we have an exact sequence
\[
\begin{tikzcd}
0\arrow[r] & \ker\deathPart{\varphi}\arrow[r] & \ker\varphi\arrow[r] & \ker\immortalPart{\varphi}
\end{tikzcd}
\]
induced by the corresponding sequences from the lifespan diagrams of $V$ and $W$.
By the second part of \cref{lem:birth_epi_death_mono}, our assumption that $\colim\varphi$ is mono implies that $\immortalPart{\varphi}$ is mono. This implies $\ker\immortalPart{\varphi}=0$, and by exactness of the above sequence also $\ker\deathPart{\varphi}\cong\ker\varphi$. In addition, we obtain $\im\immortalPart{\varphi}\cong\immortalPart{V}$ because $\immortalPart{\varphi}$ is mono.

For the assertion on images, consider the epi-mono-factorizations
\[
\begin{tikzcd}
\deathPart{V}\arrow[r, two heads] & \im\deathPart{\varphi}\arrow[r,hook] & \deathPart{W}
\end{tikzcd}
\qquad
\text{and}
\qquad
\begin{tikzcd}
V\arrow[r,"p", two heads] & \im\varphi\arrow[r,"i",hook] & W
\end{tikzcd}
\]
of $\varphi$ and $\deathPart{\varphi}$. Applying the mortal part functor to the second factorization and leaving the first one as is yields a commutative diagram
\[
\begin{tikzcd}
\deathPart{V}\arrow[r, two heads]\arrow[dr, "\deathPart{p}", swap] & \im\deathPart{\varphi}\arrow[r, hook] & \deathPart{W}\\
&\deathPart{(\im\varphi)}\arrow[ur, "\deathPart{i}", swap]&
\end{tikzcd}
\]
By the universal property of epi-mono-factorizations, we get $\im\deathPart{\varphi}\cong\deathPart{(\im\varphi)}$ if $\deathPart{i}$ is mono and $\deathPart{p}$ is epi. Since $i$ is mono, by \cref{lem:birth_epi_death_mono} $\deathPart{i}$ is mono, too. 
By assumption, $\colim\varphi=\colim i\circ\colim p$ is also mono, so $\colim p$ is mono. Using the second part of \cref{lem:birth_epi_death_mono}, we get that $\immortalPart{p}$ is mono as well.
Thus, applying the snake lemma (which holds in p-exact categories, see \cite[Lemma 6.2.8]{MR2977522}) to the diagram
\[
\begin{tikzcd}
0\arrow[r] & \deathPart{V}\arrow[r]\arrow[d,"\deathPart{p}"] & V\arrow[r,]\arrow[d,two heads,"p"] & \immortalPart{V} \arrow[d,hook,"\immortalPart{p}"]\arrow[r] & 0 \\
0\arrow[r] & \deathPart{(\im\varphi)}\arrow[r] & \im\varphi\arrow[r]& \immortalPart{(\im\varphi)}\arrow[r] & 0
\end{tikzcd}
\]
yields that $\deathPart{p}$ is epi. Hence, we obtain $\im\deathPart{\varphi}\cong\deathPart{(\im\varphi)}$ as claimed.

Moreover, recall that $\colim$ preserves epis, so $\colim p$ is not only mono but in fact an isomorphism. Thus, we get a commutative diagram
\[
\begin{tikzcd}
V\arrow[r,two heads]\arrow[d,"p",two heads] & \im\colimUnit_V\arrow[r,hook] & \Delta\colim(V)\\
\im\varphi\arrow[r,two heads] & \im\colimUnit_{\im\varphi}\arrow[r,hook] & \Delta\colim(\im\varphi)\arrow[u,"(\Delta\colim p)^{-1}",hook,swap]
\end{tikzcd}
\]
with the epi-mono-factorizations of $\colimUnit_{V}$ and $\colimUnit_{\im\varphi}$ in the rows. Uniqueness of the epi-mono-factorization implies that the middle terms have to agree, so we obtain
$\immortalPart{V}=\im\colimUnit_V\cong\im\colimUnit_{\im\varphi}=\immortalPart{(\im\varphi)}$. We have already observed that $\im\immortalPart{\varphi}\cong \immortalPart{V}$, so we obtain $\im\immortalPart{\varphi}\cong\immortalPart{(\im\varphi)}$.
\end{proof}

\section{Lifespan of Persistence Modules}\label{sec:lifespan_persistence}
We will look at the special case of the lifespan functors for persistence modules and describe their effect at the level of barcodes in \cref{subsec:lifespan_barcodes}. We then discuss how lifespan functors behave under dualization of persistence modules in \cref{subsec:lifespan_dual}.

\subsection{Lifespan Functors and Barcodes}\label{subsec:lifespan_barcodes}
In order to give an explicit description of how our lifespan functors change the barcode of an interval-decomposable persistence module we will take a detour via matching diagrams. We can apply the theory of lifespan functors to them because they have limits and colimits, as we will show using the component set from \cref{defi:component_set}.

\begin{prop}\label{prop:mch_lim_colim}
Every matching diagram $D$ indexed by $\mathbf{T}$ has a limit and a colimit.
\end{prop}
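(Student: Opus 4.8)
The plan is to construct the (co)limit explicitly in terms of the component set $\mathcal{C}(D)$ from \cref{defi:component_set}, mirroring the way limits and colimits of interval-decomposable persistence modules are assembled from their barcodes. Recall that each component $Q \in \mathcal{C}(D)$ is itself a matching diagram with $Q_t \subseteq D_t$ a singleton or empty, and $\supp(Q)$ is an interval. I will first describe the colimit. The natural candidate is the set $\colim D = \{Q \in \mathcal{C}(D) \mid \supp(Q)\text{ is bounded above}\}$, i.e.\ the set of components whose support is not cofinal in $T$ — equivalently, for $\mathbb{R}$-indexed diagrams, the components that "die". The cocone leg $\eta_t \colon D_t \to \colim D$ should send $d \in D_t$ to its component $[d]$ if that component lies in $\colim D$, and to nothing (i.e.\ $(d, Q) \notin \eta_t$ for any $Q$) otherwise; this is a matching because distinct elements of $D_t$ lie in distinct components. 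One checks this is a cocone (compatibility with structure maps $D_{t,u}$ is immediate since matched elements share a component) and then verifies the universal property: given any cocone $(f_t \colon D_t \to C)_{t \in T}$, the compatibility forces all elements of a fixed component $Q$ to be sent to the same element of $C$, and if $\supp(Q)$ is unbounded above there is no element of $C$ they can all be matched to except by a non-matching, so they must be sent to nothing; this produces a unique matching $\colim D \to C$ factoring every $f_t$. The limit is constructed dually, as $\lim D = \{Q \in \mathcal{C}(D) \mid \supp(Q)\text{ is bounded below}\}$ with legs $\limCounit_t \colon \lim D \to D_t$ sending $Q$ to the unique element of $Q_t$ when $t \in \supp(Q)$; alternatively, one can invoke the self-duality of $\mathbf{Mch}$ (\cref{defi:matchings}) to deduce existence of limits in $\mathbf{Mch}^{\mathbf{T}}$ from existence of colimits in $\mathbf{Mch}^{\mathbf{T}^{\mathrm{op}}}$, since the opposite of a matching diagram is again a matching diagram.

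The key steps, in order, are: (1) define the candidate objects $\colim D$ and $\lim D$ as the subsets of $\mathcal{C}(D)$ consisting of components with support bounded above, resp.\ below; (2) define the cocone/cone legs as matchings and check they are well-defined matchings and compatible with the structure maps of $D$; (3) verify the universal property in each case, the crux being that the component structure rigidly determines where elements must go and that unboundedness of support obstructs matching to any single target element; (4) optionally, note that one direction follows from the other by the self-duality of $\mathbf{Mch}$.

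The main obstacle I expect is step (3), specifically handling the subtle point of \emph{why} components with cofinal support cannot be matched to an element of an arbitrary cocone target. This is the matching-diagram analogue of the fact that $\colim$ of a persistence module kills the "immortal" part — but in $\mathbf{Mch}$ there are no genuine limits of elements, so the argument is combinatorial rather than algebraic: if $(f_t)_t$ is a cocone and $d \in D_t$, $d' \in D_u$ lie in the same component $Q$ via a chain of matchings, then cocone compatibility (applied along the chain) forces $f_t$ and $f_u$ to match $d$ and $d'$ to the \emph{same} element $c \in C$ (or both to nothing); if $\supp(Q)$ is unbounded above, the single element $c$ would need to be in the image of $f_t$ for cofinally many $t$, which is fine, so actually the obstruction is more delicate and must instead be extracted from the minimality/freeness built into how $\colim D$ is defined — the universal map is forced to be the canonical quotient of $\bigcup_t \{t\}\times D_t$ by $\sim$, restricted to classes that "stabilize", and one shows any cocone factors through this. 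I would therefore be careful to phrase the universal property argument as: the unique factoring morphism is dictated by the equivalence relation $\sim$, and well-definedness of this morphism as a \emph{matching} (at most one image per element) is exactly what forces the restriction to components with bounded-above support. Once this combinatorial bookkeeping is set up cleanly, the rest is routine, and the equivalence $\mathbf{Mch}^{\mathbf{T}} \simeq \mathbf{Barc}(T)$ of \cref{thm:equivalence} provides a useful sanity check: under this equivalence the construction should match the evident notion of (co)limit for barcodes.
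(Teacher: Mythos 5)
There is a genuine error here: your candidate objects are the set-theoretic complements (inside $\mathcal{C}(D)$) of the correct ones. The paper takes $\colim D$ to be the set of components whose support is \emph{not} strictly bounded above --- the ``immortal'' components --- and dually takes $\lim D$ to be the components whose support is not strictly bounded below; you take exactly the opposite in each case. To see that your choice cannot work, let $T=\mathbb{Z}$ and let $D$ be the matching diagram of the single interval $[0,1]$, so $D_0=D_1=\{\ast\}$ are matched to each other and $D_t=\emptyset$ otherwise. For any cocone $(f_t\colon D_t\to C)_t$, compatibility $f_2\circ D_{1,2}=f_1$ together with $D_2=\emptyset$ forces $f_1=\emptyset$, hence $f_0=\emptyset$; so the colimit of this diagram is the empty set. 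Your definition would instead put this mortal component into $\colim D$ and have $\eta_1$ match $\ast\in D_1$ to it, and the resulting family of legs is then not even a cocone.

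The same computation supplies the universal-property argument you were missing: if $\supp(Q)$ is strictly bounded above by some $u$, then $Q_u=\emptyset$ and the relation $f_u\circ D_{t,u}=f_t$ forces every element of $Q$ to be matched to nothing in \emph{any} cocone, which is exactly why such components die in the colimit; a component with cofinal support, by contrast, can consistently be matched to a single element of $C$, and these are the components that survive as elements of $\colim D$. You half-noticed this when you observed that cofinal support is ``fine'' and concluded the obstruction must be ``more delicate'' --- the resolution is not more delicate bookkeeping but swapping the two cases. The analogy with persistence modules that you invoke as a sanity check points the same way: $\colim C(I)\cong\mathbb{F}$ precisely when $I$ is unbounded above and $0$ otherwise (cf.\ \cref{lem:F_colim}), so the colimit retains the immortal part, not the mortal one. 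With the corrected sets, the rest of your outline (legs matching an element to its class when that class lies in the chosen subset, the componentwise verification of universality, and the optional appeal to self-duality of $\mathbf{Mch}$ for the limit) coincides with the paper's construction, which states these formulas and omits the routine verification.
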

\begin{proof}
The limit is
given by
\[
\lim D=\{Q\in\mathcal{C}(D)\mid\supp(Q)~\text{is not strictly bounded below}\},
\]
with natural maps $\lim D\to D_{t}$ matching a class $Q$ to its representative in $D_{t}$ if there is one. %
We can also explicitly construct the colimit of $D$ as
\[
\colim D=\{Q\in\mathcal{C}(D)\mid\supp(Q)~\text{is not strictly bounded above} \}.
\]
Here, the natural maps $D_{t}\to\colim D$ match an element to its equivalence class if this class is contained in the set above. 
We omit the straightforward verification that these construction satisfy the universal properties of limits and colimits.
\end{proof}

\begin{rem}
The construction above can be adapted to show that $\mathbf{Mch}$ not only has totally ordered limits and colimits, but all cofiltered limits and filtered colimits.
\end{rem}

We will now look at how the lifespan functors behave when being transported to barcodes via the equivalence in \cref{thm:equivalence}. We introduce some notation.

\begin{defi}\label{defi:bounded_intervals}
We define the following subsets of the intervals $\mathfrak{I}$ in $T$.
\begin{align*}
\birthPart{\mathfrak{I}} &= \{I\in \mathfrak{I}\mid I~\text{is strictly bounded below} \}
,&
\ancientPart{\mathfrak{I}} &= \mathfrak{I}\setminus\birthPart{\mathfrak{I}}
,\\
\deathPart{\mathfrak{I}} &= \{I\in \mathfrak{I}\mid I ~\text{is strictly bounded above} \}
,&
\immortalPart{\mathfrak{I}} &= \mathfrak{I}\setminus\deathPart{\mathfrak{I}}
,\\
\finitePart{\mathfrak{I}} &= \birthPart{\mathfrak{I}}\cap\deathPart{\mathfrak{I}}
,&
\constantPart{\mathfrak{I}} &= \ancientPart{\mathfrak{I}}\cap\immortalPart{\mathfrak{I}}
,\\
\ancientDeathPart{\mathfrak{I}} &= \ancientPart{\mathfrak{I}}\cap\deathPart{\mathfrak{I}}
,&
\immortalBirthPart{\mathfrak{I}} &= \birthPart{\mathfrak{I}}\cap\immortalPart{\mathfrak{I}}
.
\end{align*}
If $B$ is a barcode, we also define
\[
\lifespan{B}=\{(I,a)\in B\mid I\in \lifespan{\mathfrak{I}}\}
\]
for any lifespan functor $\lifespan{(-)}$.
\end{defi}

\begin{thm}\label{thm:barcode_parts}
Let $B$ be a barcode. We have
\[
\mathcal{B}(\lifespan{\mathcal{D}(B)})\cong \lifespan{B}
\]
for all lifespan functors $\lifespan{(-)}$. Moreover, under these isomorphisms, all lifespan transformations correspond to the respective inclusions and coinclusions.
\end{thm}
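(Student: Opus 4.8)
The plan is to prove the statement componentwise, working through the lifespan diagram node by node and using the explicit description of limits and colimits of matching diagrams from \cref{prop:mch_lim_colim} together with the explicit formulas for kernels, images, and cokernels of overlap matchings from \cref{prop:ker_overlap_matching}. The key observation tying everything together is that for the matching diagram $\mathcal{D}(B)$, the component set $\mathcal{C}(\mathcal{D}(B))$ is canonically in bijection with $B$ itself, with the component of $(I,a)$ supported exactly on $I$; this is immediate from \cref{defi:D}, since $\mathcal{D}(B)_{t,u}$ only matches an element with itself. Thus $\colimUnit$ and $\limCounit$ for $\mathcal{D}(B)$ can be read off directly.

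First I would treat the four basic lifespan functors. For the mortal part $\deathPart{(-)}=\ker\colimUnit$: by \cref{prop:mch_lim_colim}, $\colim\mathcal{D}(B)$ consists of those components whose support is not strictly bounded above, i.e.\ those $(I,a)$ with $I\in\immortalPart{\mathfrak{I}}$, and the natural map $\mathcal{D}(B)_t\to\colim\mathcal{D}(B)$ matches $(I,a)$ to its class precisely when $I$ is not strictly bounded above. Hence $\colimUnit_{\mathcal{D}(B)}$, viewed as a matching, is the partial identity defined on exactly the components with $I\in\immortalPart{\mathfrak I}$, so by the kernel formula in \cref{thm:mch_puppe_exact} (applied pointwise) its kernel is the sub-matching-diagram spanned by the components with $I\in\deathPart{\mathfrak I}$. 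Under $\mathcal{B}$ this is $\deathPart{B}$, and the inclusion $\deathPart{\mathcal{D}(B)}\hookrightarrow\mathcal{D}(B)$ is sent by $\mathcal B$ to the inclusion $\deathPart B\hookrightarrow B$ by \cref{defi:B}. The immortal part $\immortalPart{(-)}=\im\colimUnit$ is handled the same way: its image is spanned by the components with $I\in\immortalPart{\mathfrak I}$. The nascent and ancient parts are completely dual, using that $\lim\mathcal{D}(B)$ consists of components whose support is not strictly bounded below.

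Next I would deduce the remaining five nodes from the basic four. The finite part $\finitePart{(-)}=\im(\deathPart{(-)}\to\birthPart{(-)})$: having identified $\deathPart{\mathcal{D}(B)}$ and $\birthPart{\mathcal{D}(B)}$ with the sub/quotient matching diagrams on $\deathPart B$ and $\birthPart B$, the composite $\deathPart{\mathcal{D}(B)}\to\mathcal{D}(B)\to\birthPart{\mathcal{D}(B)}$ is again a partial identity, whose image is spanned by the components with $I\in\deathPart{\mathfrak I}\cap\birthPart{\mathfrak I}=\finitePart{\mathfrak I}$, giving $\finitePart B$; dually for $\constantPart{(-)}$ (one may also invoke \cref{rem:constanPart}). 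For the ancient mortal part $\ancientDeathPart{(-)}=\ker(\ancientPart{(-)}\to\immortalPart{(-)})$ and the immortal nascent part $\immortalBirthPart{(-)}$ the same computation gives the intersections $\ancientPart{\mathfrak I}\cap\deathPart{\mathfrak I}$ and $\birthPart{\mathfrak I}\cap\immortalPart{\mathfrak I}$, matching \cref{defi:bounded_intervals}. Throughout, each lifespan transformation is a partial-identity overlap matching restricting to the inclusion or coinclusion of the corresponding subsets of intervals, which is exactly what \cref{defi:B} sends it to; so the "moreover" clause follows along the way rather than needing separate treatment.

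The main obstacle, and the only place requiring genuine care, is the bookkeeping around the identification $\mathcal{C}(\mathcal{D}(B))\cong B$ and the verification that, under $\mathcal{B}$, a partial-identity morphism of matching diagrams supported on a subset $S\subseteq B$ of components indeed corresponds to the inclusion/coinclusion of $\{(I,a)\in B\mid I\in S'\}$ for the relevant $S'\subseteq\mathfrak I$ — one must check the condition $Q_t\times R_t\subseteq\psi_t$ in \cref{defi:B} is met exactly by the diagonal pairs. Everything else is a routine but somewhat lengthy traversal of the lifespan diagram; I would present the argument for $\deathPart{(-)}$ in full and then indicate that the other eight nodes follow by the same method together with duality, to avoid eight near-identical computations.
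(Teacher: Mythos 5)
Your proposal is correct and follows essentially the same route as the paper's proof: both identify the components of $\mathcal{D}(B)$ with $B$ itself, use the explicit limit/colimit construction from \cref{prop:mch_lim_colim} to describe $\colimUnit_{\mathcal{D}(B)}$ and $\limCounit_{\mathcal{D}(B)}$ as partial identities determined by boundedness of supports, and then read off kernels, images, and cokernels from the explicit formulas. The only (immaterial) difference is that the paper transports the unit and counit to $\mathbf{Barc(T)}$ first and computes there via \cref{prop:ker_overlap_matching}, whereas you compute in $\mathbf{Mch^T}$ via \cref{thm:mch_puppe_exact} and apply $\mathcal{B}$ at the end.
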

\begin{proof}
From the definitions of $\mathcal{B}$ and $\mathcal{D}$ as well as the explicit constructions of limits and colimits for matching diagrams in the proof of \cref{prop:mch_lim_colim}, we obtain
\begin{align*}
\mathcal{B}(\Delta\lim\mathcal{D}(B)) &\cong \{(T,(I,a))\in\mathfrak{I}\times B \mid I\in\ancientPart{\mathfrak{I}}\}
,
\\
\mathcal{B}(\Delta\colim\mathcal{D}(B)) &\cong \{(T,(I,a))\in\mathfrak{I}\times B \mid I\in\immortalPart{\mathfrak{I}}\}.
\end{align*}
The overlap matching
$\mathcal{B}(\limCounit_{\mathcal{D}(B)}) \colon \mathcal{B}(\Delta\lim\mathcal{D}(B)) \to B$ matches every interval $(T,(I,a))$ to $(I,a)$.
Similarly, $\mathcal{B}(\colimUnit_{\mathcal{D}(B)})$ matches every element $(I,a)$ with $I\in\immortalPart{\mathfrak{I}}$ to $(T,(I,a))$. All lifespan functors are given on the level of barcodes by first forming kernels, cokernels, and images of $\mathcal{B}(\limCounit_{\mathcal{D}(B)})$ and $\mathcal{B}(\colimUnit_{\mathcal{D}(B)})$, and then kernels, cokernels, and images of the resulting composite lifespan transformations. Hence, the claim follows by applying the formulas for kernels, cokernels, and images of overlap matchings from \cref{prop:ker_overlap_matching} several times.
\end{proof}

Next, we want to show that all the lifespan functors are compatible with the matching module functor $\mathcal{F}$. Since $\mathcal{F}$ is exact, a straightforward proof strategy would be to show that $\mathcal{F}$ also commutes with $\lim$ and $\colim$ and then use the fact that all lifespan functors are obtained from $\lim$ and $\colim$ by forming kernels, cokernels, and images. For colimits, this works out.

\begin{lem}\label{lem:F_colim}
The functor $\mathcal{F}\colon\mathbf{Mch}\to\mathbf{Vec}$ commutes with $\mathbf{T}$-indexed colimits.
\end{lem}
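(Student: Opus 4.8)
The plan is to verify directly that applying $\mathcal{F}$ to the colimit cocone constructed in \cref{prop:mch_lim_colim} produces a colimit cocone in $\mathbf{Vec}$. Fix a matching diagram $D$ indexed by $\mathbf{T}$, and let $\lambda_t\colon D_t\to\colim D$ denote the natural maps from \cref{prop:mch_lim_colim}. Since $\mathcal{F}$ is a functor, the maps $\mathcal{F}(\lambda_t)\colon\mathcal{F}(D_t)\to\mathcal{F}(\colim D)$ form a cocone over $\mathcal{F}\circ D$, and concretely $\mathcal{F}(\lambda_t)$ sends a basis vector $d\in D_t$ to the basis vector $[d]\in\colim D$ when the component $[d]$ lies in $\colim D$ (equivalently, when $\supp([d])$ is not strictly bounded above), and to $0$ otherwise. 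I would then show that this cocone is universal, which gives precisely that the canonical comparison map $\colim(\mathcal{F}\circ D)\to\mathcal{F}(\colim D)$ is an isomorphism.

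For the universal property I would first record two facts about components, both following from \cref{defi:component_set}, the totality of the order, and the description of components as interval-supported sub-matching-diagrams from \cite{Bauer2020Persistence}: (i) if $d\in D_t$ and $d'\in D_u$ with $t\le u$ represent the same component, then $(d,d')\in D_{t,u}$, so $\mathcal{F}(D_{t,u})(d)=d'$; and (ii) if $u\ge t$ with $u\notin\supp([d])$, then $(d,d')\notin D_{t,u}$ for all $d'\in D_u$, so $\mathcal{F}(D_{t,u})(d)=0$. Since $\supp([d])$ is an interval containing $t$, it is strictly bounded above exactly when such a $u\ge t$ with $u\notin\supp([d])$ exists, i.e.\ exactly when $d$ is killed by some structure map of $\mathcal{F}\circ D$. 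Now given a vector space $W$ with a compatible family $f_t\colon\mathcal{F}(D_t)\to W$, I would define $g\colon\mathcal{F}(\colim D)\to W$ on the basis $\colim D$ by $g(Q)=f_t(d)$ for any representative $d\in D_t$ of $Q$; fact (i) and the relation $f_u\circ\mathcal{F}(D_{t,u})=f_t$ make this independent of the choice. One then checks $g\circ\mathcal{F}(\lambda_t)=f_t$ on each basis vector $d\in D_t$: if $[d]\in\colim D$ this is the definition of $g$, and if $[d]\notin\colim D$ then $\mathcal{F}(D_{t,u})(d)=0$ for some $u\ge t$ by fact (ii), so $f_t(d)=f_u(\mathcal{F}(D_{t,u})(d))=0=g(0)$. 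Uniqueness of $g$ is immediate, since every basis vector of $\mathcal{F}(\colim D)$ is of the form $\mathcal{F}(\lambda_t)(d)$. Hence $\mathcal{F}$ commutes with $\mathbf{T}$-indexed colimits.

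The only genuine friction I anticipate is the bookkeeping around facts (i) and (ii) — essentially the statement that a component, viewed as a matching diagram, is supported on an interval and has identity structure maps throughout that interval, so that ``$[d]=[d']$ for comparable indices'' is witnessed by a single structure map of $D$. This is exactly the content cited from \cite{Bauer2020Persistence} following \cref{defi:component_set,defi:B}, so I would simply invoke it; everything else is a routine verification of the universal property of the colimit in $\mathbf{Vec}$.
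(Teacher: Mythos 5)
Your proof is correct, but it takes a genuinely different route from the paper's. The paper argues structurally: it decomposes $D$ as the disjoint union (butterfly product) of its components, observes that $\mathcal{F}$ turns this into a direct sum of the corresponding one-bar modules, and then uses that $\colim$ commutes with direct sums together with the one-line computation that $\colim\mathcal{F}(Q)$ is $\mathbb{F}$ or $0$ according to whether $\supp(Q)$ lies in $\immortalPart{\mathfrak{I}}$; matching this against the explicit description of $\colim D$ from \cref{prop:mch_lim_colim} finishes the argument. You instead verify the universal property directly, constructing the mediating map $g$ on the basis $\colim D$ of $\mathcal{F}(\colim D)$ by choosing representatives, and checking well-definedness, factorization, and uniqueness by hand. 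Both arguments ultimately rest on the same cited facts about components (interval support, and that membership of two elements at comparable indices in the same component is witnessed by a single structure map of $D$ -- your facts (i) and (ii)); the paper additionally uses this when it identifies $D$ with the disjoint union of its components. The paper's version is shorter and reuses general categorical machinery (colimits commute with coproducts), while yours is more elementary and self-contained, and has the minor virtue of exhibiting the comparison isomorphism explicitly rather than inferring it from a chain of natural isomorphisms. Either is acceptable.
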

\begin{proof}
Recall that in the proof of \cref{prop:mch_lim_colim} we constructed the colimit of a matching diagram $D$ as the set of components $Q\in\mathcal{C}(D)$ whose support is in $\immortalPart{\mathfrak{I}}$. Further, recall from the definition of the component set that each component can be regarded as a matching diagram. As such, $D$ is canonically isomorphic to the disjoint union (which is not the coproduct, but rather a \emph{butterfly product} in $\mathbf{Mch}$, cf. \cite[Section~2.1.7]{MR2977522}) of all its components. Clearly, $\mathcal{F}$ takes disjoint unions to direct sums. Moreover, for each component $Q\in\mathcal{C}(D)$ the colimit of $\mathcal{F}(Q)$ is one-dimensional if $\supp Q\in\immortalPart{\mathfrak{I}}$ and trivial else. Altogether, we obtain a natural isomorphism
\[
\colim\mathcal{F}(D)\cong\colim\bigoplus_{Q\in\mathcal{C}(D)}\mathcal{F}(Q)
\cong\bigoplus_{Q\in\mathcal{C}(D)}\colim \mathcal{F}(Q)\cong\bigoplus_{\substack{Q\in\mathcal{C}(D)\\ \mathclap{\supp(Q)\in\immortalPart{\mathfrak{I}}(T)}}}\mathbb{F}\cong\mathcal{F}(\colim D),
\]
proving the claim.
\end{proof}

In contrast to colimits, $\mathcal{F}$ generally does not commute with $\mathbf{T}$-indexed limits:
Consider the matching diagram $D$ indexed by the negative integers and given by $D_{-n}=\{1,\dots,n\}$ with structure maps matching each number to itself. Then $\mathcal{F}(\lim D)=\bigoplus_{n\in\mathbb{N}}\mathbb{F}$, but $\lim\mathcal{F}(D)=\prod_{n\in\mathbb{N}}\mathbb{F}$.

Instead, we will use a more explicit argument to show that $\mathcal{F}$ commutes with the ancient part, which, together with the colimit, can also be used as a starting point to construct the other lifespan functors by forming kernels, cokernels, and images.

\begin{thm}\label{thm:lifespan_matchingmodule_commute}
Let $D$ be a matching diagram. 
We have canonical isomorphisms
\[\lifespan{\mathcal{F}(D)}\cong\mathcal{F}(\lifespan{D})\] 
for all lifespan functors $\lifespan{(-)}$, which commute with the lifespan transformations.
\end{thm}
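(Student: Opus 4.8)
The plan is to reduce the statement to the behaviour of $\mathcal{F}$ on the two universal transformations $\colimUnit$ and $\limCounit$, using that $\mathcal{F}$ is exact (\cref{prop:F_creates_exactness}) and therefore commutes with forming kernels, images, and cokernels of morphisms, and that $\mathcal{F}$ applied pointwise clearly commutes with the diagonal functor $\Delta$. Since $\mathcal{F}$ commutes with $\colim$ by \cref{lem:F_colim}, the universal property of colimits identifies $\mathcal{F}(\colimUnit_D)$ with $\colimUnit_{\mathcal{F}(D)}$ under the canonical isomorphism $\mathcal{F}(\Delta\colim D)\cong\Delta\colim\mathcal{F}(D)$; applying $\mathcal{F}$ to the defining kernel and image then yields canonical isomorphisms $\mathcal{F}(\deathPart{D})\cong\deathPart{\mathcal{F}(D)}$ and $\mathcal{F}(\immortalPart{D})\cong\immortalPart{\mathcal{F}(D)}$ as, respectively, a subobject and a quotient of $\mathcal{F}(D)$. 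The obstacle is the ancient part, where the analogous reasoning fails because $\mathcal{F}$ does \emph{not} commute with $\lim$ (e.g.\ a direct sum can sit strictly inside a product).

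For the ancient part I would argue directly that, although $\mathcal{F}(\limCounit_D)$ and $\limCounit_{\mathcal{F}(D)}$ have different domains, they have the \emph{same image} as subobjects of $\mathcal{F}(D)$. As in the proof of \cref{lem:F_colim}, decompose $D$ as the disjoint union of its components $Q\in\mathcal{C}(D)$, so $\mathcal{F}(D)\cong\bigoplus_{Q}\mathcal{F}(Q)$ with each $\mathcal{F}(Q)$ the interval module $C(\supp Q)$ from \cref{defi:barcode_module}. Since the structure maps of a direct sum are block diagonal, an element of $\lim\mathcal{F}(D)$ is a family whose $Q$-component lies in $\lim C(\supp Q)$; and a compatible family in $C(I)$ is constant on $I$ and is forced to vanish precisely when $I$ is strictly bounded below (a strict lower bound $s<I$ makes the structure map $C(I)_s\to C(I)_u$ trivial at its source for every $u\in I$). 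Evaluating at an index $t$, it follows that $\im\limCounit_{\mathcal{F}(D)}$ at $t$ is spanned by the generators of those $C(\supp Q)$ with $t\in\supp Q$ and $\supp Q\in\ancientPart{\mathfrak{I}}$. On the matching-diagram side, $\ancientPart{D}=\im\limCounit_{D}$ is the disjoint union of exactly the components $Q$ with $\supp Q$ not strictly bounded below (compare \cref{thm:barcode_parts}), so $\mathcal{F}(\ancientPart{D})$ is the corresponding sub-direct-sum of $\mathcal{F}(D)$, namely the very same subobject. Hence $\mathcal{F}(\ancientPart{D})\cong\ancientPart{\mathcal{F}(D)}$, compatibly with the inclusions into $\mathcal{F}(D)$. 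I expect this computation to be the main obstacle: the key point is that $\limCounit$ at a fixed index only records one coordinate per component, and these are precisely the components for which $\lim C(\supp Q)$ is already one-dimensional, so the failure of $\mathcal{F}(\lim D)\to\lim\mathcal{F}(D)$ to be an isomorphism does not affect the image.

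From here everything is formal. A cokernel of a morphism depends only on the morphism's image, so $\mathcal{F}(\birthPart{D})=\mathcal{F}(\coker\limCounit_D)\cong\coker\mathcal{F}(\limCounit_D)\cong\coker\limCounit_{\mathcal{F}(D)}=\birthPart{\mathcal{F}(D)}$, realizing $\mathcal{F}(\birthPart{D})$ as the canonical quotient $\mathcal{F}(D)/\ancientPart{\mathcal{F}(D)}$. The remaining lifespan functors $\finitePart{(-)}$, $\constantPart{(-)}$, $\ancientDeathPart{(-)}$, $\immortalBirthPart{(-)}$ of \cref{defi:lifespan_diagram} are the images, kernels, and cokernels of the composite maps $\deathPart{(-)}\to\birthPart{(-)}$ and $\ancientPart{(-)}\to\immortalPart{(-)}$, which factor through $D$; applying the exact functor $\mathcal{F}$ and substituting the identifications already obtained — each of which presents $\mathcal{F}$ of a lifespan functor of $D$ as the corresponding canonical subobject or quotient of $\mathcal{F}(D)$, compatibly with the maps to and from $\mathcal{F}(D)$ — produces the remaining isomorphisms. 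Compatibility with every lifespan transformation, and naturality of the isomorphisms in $D$, then follow from the functoriality of the kernel, image, and cokernel constructions together with the naturality of the isomorphism in \cref{lem:F_colim}.
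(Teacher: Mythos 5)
Your proposal is correct and follows essentially the same route as the paper: reduce everything to the colimit unit (handled by the exactness of $\mathcal{F}$ and \cref{lem:F_colim}) plus a hands-on identification of the ancient part, then derive the remaining six functors formally as kernels, images, and cokernels of the resulting composites. Your ancient-part computation — comparing the images of $\mathcal{F}(\limCounit_D)$ and $\limCounit_{\mathcal{F}(D)}$ as subobjects of $\mathcal{F}(D)$ via the component decomposition and the vanishing of $\lim C(I)$ for $I$ strictly bounded below — is the same argument the paper carries out by an element chase with finite linear combinations, packaged slightly more cleanly.
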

\begin{proof}
We start by showing that $\mathcal{F}$ commutes with the ancient part. For this, consider the epi-mono-factorizations
$\Delta\lim D \twoheadrightarrow  \ancientPart{D} \hookrightarrow D$
and
$\Delta\lim\mathcal{F}(D) \twoheadrightarrow \ancientPart{\mathcal{F}(D)} \hookrightarrow \mathcal{F}(D)$.
Recall that $\mathcal{F}$ preserves exactness and hence also monos and epis. Thus, by applying $\mathcal{F}$ to the first diagram, we get another epi-mono-factorization.
The universal property of the limit also induces a unique morphism $\mathcal{F}(\Delta\lim D)\to\Delta\lim\mathcal{F}(D)$ through which the cone morphism $\mathcal{F}(\Delta\lim D)\to\mathcal{F}(D)$ factors. We obtain a commutative diagram
\[
\begin{tikzcd}
\mathcal{F}(\Delta\lim D)\arrow[r, two heads]\arrow[d]& \mathcal{F}(\ancientPart{D})\arrow[r, hook] & \mathcal{F}(D)\\
\Delta\lim\mathcal{F}(D)\arrow[r, two heads] & \ancientPart{\mathcal{F}(D)}\arrow[r, hook] & \mathcal{F}(D)\arrow[u,hook,"\id"]
\end{tikzcd}
\]
Since epi-mono factorizations are unique up to unique isomorphism, we only need to show that the composite morphism $\mathcal{F}(\Delta\lim D)\to\ancientPart{\mathcal{F}(D)}$ is epi in order to obtain our claim. So let $t_{0}\in T$ and $m\in\ancientPart{\mathcal{F}(D)}_{t_{0}}$. Because $m$ is in the ancient part, i.e., the image of the natural map $\lim\mathcal{F}(D)\to\mathcal{F}(D)_{t_{0}}$, there exists a family $(m_{t})_{t}$ with $m_{t}\in\mathcal{F}(D)_{t}$, $m_{t_{0}}=m$ and such that $\mathcal{F}(D)_{s,t}(m_{s})=m_{t}$ whenever $s\leq t$. Now, write finite formal linear combinations $m_{t}=\sum_{\alpha\in A_{t}}\lambda_{\alpha,t}d_{\alpha,t}$ with $d_{\alpha,t}\in D_{t}$ and $\lambda_{\alpha,t}\neq 0$ for all $t\in T$, $\alpha\in A_{t}$. Because $\mathcal{F}(D)_{s,t_{0}}(m_{s})=m_{t_{0}}$ holds for any $s\leq t_{0}$, we obtain that for any $\alpha\in A_{t_{0}}$ and $s\leq t_{0}$ there exists $\beta=\beta(\alpha,s)\in A_{s}$ with $(d_{\beta,s},d_{\alpha,t_{0}})\in D_{s,t_{0}}$. In particular, the component $Q_{\alpha}$ represented by $d_{\alpha,t_{0}}$ has support in $\ancientPart{\mathfrak{I}}$ for any $\alpha\in A_{t_{0}}$. Thus, $m$ is the image of $\sum_{\alpha\in A_{t_{0}}}\lambda_{\alpha,t_{0}}Q_{\alpha}\in\mathcal{F}(\lim D)$ under the composite morphism $\mathcal{F}(\lim D)\to\ancientPart{\mathcal{F}(D)}_{t_{0}}$. In particular, this map is epi as we needed to show.

The claimed isomorphisms for the other lifespan functors can now be deduced from the isomorphisms we have shown already: Consider the commutative squares
\[
\begin{tikzcd}
\mathcal{F}(D)\arrow[d,"\id"]\arrow[r,"\colimUnit_{\mathcal{F}(D)}"] &\Delta\colim\mathcal{F}(D)\arrow[d] \\
\mathcal{F}(D)\arrow[r,"\mathcal{F}(\colimUnit_{D})"] & \mathcal{F}(\Delta\colim D)
\end{tikzcd}
\qquad
\text{and}
\qquad
\begin{tikzcd}
\ancientPart{\mathcal{F}(D)}\arrow[r,"\alpha_{\mathcal{F}(D)}"]\arrow[d] & \mathcal{F}(D)\arrow[d,"\id"]\\
\mathcal{F}(\ancientPart{D})\arrow[r,"\mathcal{F}(\alpha_{D})"] & \mathcal{F}(D)
\end{tikzcd}
\]
We have just shown that the vertical maps in the square on the right are isomorphisms. The vertical maps in the square on the left are isomorphisms because $\mathcal{F}$ and $\colim$ commute by \cref{lem:F_colim}. Thus, we obtain 
\begin{align*}
\deathPart{\mathcal{F}(D)}=\ker\colimUnit_{\mathcal{F}(D)}&\cong\mathcal{F}(\ker\colimUnit_{D})=\mathcal{F}(\deathPart{D})\\
\immortalPart{\mathcal{F}(D)}=\im\colimUnit_{\mathcal{F}(D)}&\cong\mathcal{F}(\im\colimUnit_{D})=\mathcal{F}(\immortalPart{D})\\
\birthPart{\mathcal{F}(D)}=\coker\alpha_{\mathcal{F}(D)}&\cong\mathcal{F}(\coker\alpha_{D})=\mathcal{F}(\birthPart{D}).
\end{align*}
By these isomorphisms, the vertical maps in the commutative squares
\[
\begin{tikzcd}
\deathPart{\mathcal{F}(D)}\arrow[r,"\beta_{\mathcal{F}(D)}"]\arrow[d] & \birthPart{\mathcal{F}(D)}\arrow[d]\\
\mathcal{F}(\deathPart{D})\arrow[r,"\mathcal{F}(\beta_{D})"] & \mathcal{F}(\birthPart{D})
\end{tikzcd}
\qquad
\text{and}
\qquad
\begin{tikzcd}
\ancientPart{\mathcal{F}(D)}\arrow[r,"\gamma_{\mathcal{F}(D)}"]\arrow[d] & \immortalPart{\mathcal{F}(D)}\arrow[d]\\
\mathcal{F}(\ancientPart{D})\arrow[r,"\mathcal{F}(\gamma_{D})"] & \mathcal{F}(\immortalPart{D})
\end{tikzcd}
\]
are isomorphisms, too. This yields
\begin{align*}
\ancientDeathPart{\mathcal{F}(D)}=\ker\beta_{\mathcal{F}(D)}&\cong\mathcal{F}(\ker\beta_{D})=\mathcal{F}(\ancientDeathPart{D})\\
\finitePart{\mathcal{F}(D)}=\im\beta_{\mathcal{F}(D)}&\cong\mathcal{F}(\im\beta_{D})=\mathcal{F}(\finitePart{D})\\
\immortalBirthPart{\mathcal{F}(D)}=\coker\gamma_{\mathcal{F}(D)}&\cong\mathcal{F}(\coker\gamma_{D})=\mathcal{F}(\immortalBirthPart{D})\\
\constantPart{\mathcal{F}(D)}=\im\gamma_{\mathcal{F}(D)}&\cong\mathcal{F}(\im\gamma_{D})=\mathcal{F}(\constantPart{D}).
\qedhere
\end{align*}
\end{proof}

Finally, combining the fact that $\mathcal{F}$ commutes with the lifespan functors by \cref{thm:lifespan_matchingmodule_commute} with the fact that passing from barcodes to persistence modules is compatible with passing from matching diagrams to persistence modules by \cref{prop:passing_to_modules}, we can use the formulas for the effect of lifespan functors on barcodes from \cref{thm:barcode_parts} to describe how the lifespan functors change the barcodes of persistence modules.

\begin{cor}\label{cor:change_in_barcode}
Let $M$ be a persistence module. If $B$ is a barcode of $M$, then
\[
\lifespan{B}=\{(I,a)\in B\mid I\in\lifespan{\mathfrak{I}}\}
\]
is a barcode for $\lifespan{M}$, where $\lifespan{(-)}$ is any lifespan functor. 
\end{cor}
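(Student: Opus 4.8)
The plan is to chase the chain of natural isomorphisms and equivalences relating persistence modules, matching diagrams, and barcodes that has been assembled in the preceding sections. By \cref{defi:barcode_module}, the hypothesis that $B$ is a barcode of $M$ means $\mathcal{M}(B)\cong M$, and the conclusion to be proved is that $\mathcal{M}(\lifespan{B})\cong\lifespan{M}$, where $\lifespan{B}$ is the barcode defined in \cref{defi:bounded_intervals}.

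First I would transport the computation from $\mathbf{Vec^T}$ to $\mathbf{Mch^T}$. Using the natural isomorphism $\mathcal{M}\cong\mathcal{F}\circ\mathcal{D}$ of \cref{prop:passing_to_modules}, we get $M\cong\mathcal{F}(\mathcal{D}(B))$, and since the lifespan functor $\lifespan{(-)}\colon\mathbf{Vec^T}\to\mathbf{Vec^T}$ preserves isomorphisms, $\lifespan{M}\cong\lifespan{(\mathcal{F}(\mathcal{D}(B)))}$. Now \cref{thm:lifespan_matchingmodule_commute}, applied to the matching diagram $D=\mathcal{D}(B)$, yields $\lifespan{(\mathcal{F}(\mathcal{D}(B)))}\cong\mathcal{F}(\lifespan{(\mathcal{D}(B))})$.

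Next I would identify $\lifespan{(\mathcal{D}(B))}$ at the level of barcodes. Since $\mathcal{B}$ and $\mathcal{D}$ are quasi-inverse equivalences by \cref{thm:equivalence}, we have $\lifespan{(\mathcal{D}(B))}\cong\mathcal{D}\big(\mathcal{B}(\lifespan{(\mathcal{D}(B))})\big)$, and \cref{thm:barcode_parts} gives $\mathcal{B}(\lifespan{(\mathcal{D}(B))})\cong\lifespan{B}$, so $\lifespan{(\mathcal{D}(B))}\cong\mathcal{D}(\lifespan{B})$. Applying $\mathcal{F}$ and invoking $\mathcal{M}\cong\mathcal{F}\circ\mathcal{D}$ once more, $\mathcal{F}(\lifespan{(\mathcal{D}(B))})\cong\mathcal{F}(\mathcal{D}(\lifespan{B}))\cong\mathcal{M}(\lifespan{B})$. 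Composing all the isomorphisms produced along the way gives $\lifespan{M}\cong\mathcal{M}(\lifespan{B})$, which is exactly the claim; if desired, the coherence of the isomorphisms with the lifespan transformations provided by \cref{thm:barcode_parts,thm:lifespan_matchingmodule_commute} shows that this identification respects the whole lifespan diagram.

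Since every step is a direct invocation of a previously established natural isomorphism or equivalence, I do not expect a genuine obstacle here; the only thing requiring care is the notational overloading of $\lifespan{(-)}$, which denotes a functor on $\mathbf{Vec^T}$, a functor on $\mathbf{Mch^T}$, and — via \cref{defi:bounded_intervals} — the operation $B\mapsto\lifespan{B}$ on barcodes. One must make sure that at each step the correct incarnation is used and that \cref{thm:barcode_parts} and \cref{thm:lifespan_matchingmodule_commute} are applied to the appropriate object ($\mathcal{D}(B)$ as a barcode-derived matching diagram, respectively as an argument to $\mathcal{F}$).
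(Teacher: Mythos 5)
Your proposal is correct and follows the same route as the paper, which derives the corollary by combining \cref{thm:lifespan_matchingmodule_commute} (commutation of $\mathcal{F}$ with the lifespan functors), \cref{prop:passing_to_modules} ($\mathcal{M}\cong\mathcal{F}\circ\mathcal{D}$), and the barcode formulas of \cref{thm:barcode_parts}. Your explicit chain of isomorphisms, together with the caveat about the overloaded notation $\lifespan{(-)}$, is exactly the intended argument spelled out in more detail.
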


From \cref{cor:change_in_barcode} we obtain that all the short exact sequences in the lifespan diagram of an interval-decomposable persistence module are obtained up to isomorphism by applying $\mathcal{M}$ to a short exact sequence of barcodes of the form
$B' \hookrightarrow B'\sqcup B'' \twoheadrightarrow B''$.
The inclusion and coinclusion into and out of the disjoint union only match bars with identical underlying intervals, so they admit one-sided inverses by \cref{prop:split_overlap}. Applying $\mathcal{M}$ to the sequence of barcodes above preserves these one-sided inverses, so we obtain the following corollary.

\begin{cor}\label{cor:lifespan_split}
All short exact sequences in the lifespan diagram of an interval-decomposable persistence module split.
\end{cor}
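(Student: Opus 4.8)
The plan is to transport the claim to the category of barcodes, where the relevant short exact sequences are manifestly disjoint-union sequences and hence split, and then to push the splitting back to persistence modules through the functor $\mathcal{M}$.

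To set this up, I would fix an interval decomposition $M \cong \mathcal{M}(B) \cong \mathcal{F}(\mathcal{D}(B))$ for some barcode $B$, using \cref{prop:passing_to_modules}. By \cref{thm:lifespan_matchingmodule_commute}, the matching module functor $\mathcal{F}$ commutes with every lifespan functor and every lifespan transformation, so the whole lifespan diagram of $M$ is isomorphic to the image under $\mathcal{F}$ of the lifespan diagram of the matching diagram $\mathcal{D}(B)$. By \cref{thm:barcode_parts} together with the equivalence of \cref{thm:equivalence}, the lifespan diagram of $\mathcal{D}(B)$ is in turn isomorphic to $\mathcal{D}$ applied to the diagram of barcodes whose node at the lifespan functor $\lifespan{(-)}$ is the sub-barcode $\lifespan{B} = \{(I,a)\in B \mid I \in \lifespan{\mathfrak{I}}\}$ and whose edges are the corresponding inclusion and coinclusion overlap matchings. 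Composing, every short exact sequence in the lifespan diagram of $M$ is isomorphic to $\mathcal{M} = \mathcal{F}\circ\mathcal{D}$ applied to a short exact sequence of barcodes.

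Next I would unwind which short exact sequences of barcodes occur. Each diagonal short exact sequence in the lifespan diagram corresponds, via \cref{defi:bounded_intervals}, to a partition of one of the index sets $\lifespan{\mathfrak{I}}$ into two of the others — for instance $\mathfrak{I} = \deathPart{\mathfrak{I}} \sqcup \immortalPart{\mathfrak{I}}$, $\mathfrak{I} = \ancientPart{\mathfrak{I}} \sqcup \birthPart{\mathfrak{I}}$, $\deathPart{\mathfrak{I}} = \ancientDeathPart{\mathfrak{I}} \sqcup \finitePart{\mathfrak{I}}$, $\birthPart{\mathfrak{I}} = \finitePart{\mathfrak{I}} \sqcup \immortalBirthPart{\mathfrak{I}}$, $\ancientPart{\mathfrak{I}} = \ancientDeathPart{\mathfrak{I}} \sqcup \constantPart{\mathfrak{I}}$, and $\immortalPart{\mathfrak{I}} = \constantPart{\mathfrak{I}} \sqcup \immortalBirthPart{\mathfrak{I}}$, all of which follow immediately from the defining intersections and complements. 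Restricting $B$ along such a partition realizes the sequence as the canonical $B' \hookrightarrow B'\sqcup B'' \twoheadrightarrow B''$, where the inclusion overlap matching matches each $(I,a)\in B'$ only to its own copy in $B'\sqcup B''$; in particular a pair $((I,a),(I',a'))$ lies in this matching only if $I=I'$, so \cref{prop:split_overlap} provides a retraction and the inclusion is a split monomorphism, and symmetrically the coinclusion is a split epimorphism.

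Finally I would invoke that any functor preserves retractions, so $\mathcal{M}$ sends the split monomorphism $B'\hookrightarrow B'\sqcup B''$ to a split monomorphism $\mathcal{M}(B')\hookrightarrow\mathcal{M}(B'\sqcup B'')$; a short exact sequence in the abelian category $\mathbf{Vec^T}$ whose left-hand monomorphism admits a retraction splits by the splitting lemma. Transporting along the isomorphisms from the first step, every short exact sequence in the lifespan diagram of $M$ splits, which is the claim. I do not anticipate a genuine obstacle here, since \cref{thm:barcode_parts,thm:lifespan_matchingmodule_commute,prop:split_overlap} already do the heavy lifting; the only point demanding care is the bookkeeping — matching each diagonal short exact sequence in the lifespan diagram to the correct partition of intervals and checking, via \cref{thm:barcode_parts}, that the lifespan transformations genuinely become the canonical disjoint-union (co)inclusions rather than some other overlap matchings.
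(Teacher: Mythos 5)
Your proposal is correct and follows essentially the same route as the paper: identify each short exact sequence in the lifespan diagram with $\mathcal{M}$ applied to a canonical disjoint-union sequence of barcodes (via \cref{cor:change_in_barcode}, which rests on \cref{thm:barcode_parts,thm:lifespan_matchingmodule_commute}), split it there using \cref{prop:split_overlap}, and transport the one-sided inverses back through the functor. The paper's argument is just a terser version of your bookkeeping, so there is nothing to add.
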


For the unborn complement, the expected formula for the effect on barcodes and the compatibility with $\mathcal{F}$ hold as for the lifespan functors.
We summarize the results and omit the analogous proofs.

\begin{prop}\label{prop:unborn_part}
We have
\[
\mathcal{B}(\unbornPart{\mathcal{D}(B)})\cong\unbornPart{B}:=\{(T\setminus I,a)\mid(I,a)\in B, I\neq T\text{ and }I\in\immortalPart{\mathfrak{I}}\}
\]
for any barcode $B$. 
Moreover, the unborn complement commutes with the matching module functor up to natural transformation. In particular, if $M$ is a persistence module with barcode $B$, then $\unbornPart{B}$ is a barcode of $\unbornPart{M}$.
\end{prop}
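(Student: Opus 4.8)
The plan is to follow the same three-stage route already used for the lifespan functors in \cref{thm:barcode_parts,thm:lifespan_matchingmodule_commute}: first compute the effect of $\unbornPart{(-)}$ on barcodes through the equivalence of \cref{thm:equivalence}, then check that the matching module functor $\mathcal{F}$ commutes with $\unbornPart{(-)}$, and finally combine these with \cref{prop:passing_to_modules} to read off the statement for persistence modules. Throughout one uses only that $\unbornPart{(-)}=\coker\colimUnit_{(-)}$ together with results already established for $\colim$.

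\emph{Barcode formula.} I would argue exactly as in the proof of \cref{thm:barcode_parts}. Recall from there that $\mathcal{B}(\Delta\colim\mathcal{D}(B))\cong\{(T,(I,a))\in\mathfrak{I}\times B\mid I\in\immortalPart{\mathfrak{I}}\}$ and that the overlap matching $\mathcal{B}(\colimUnit_{\mathcal{D}(B)})$ sends each $(I,a)$ with $I\in\immortalPart{\mathfrak{I}}$ to $(T,(I,a))$. Since $\mathcal{B}$ is part of an equivalence it transports the cokernel $\unbornPart{\mathcal{D}(B)}=\coker\colimUnit_{\mathcal{D}(B)}$ to the cokernel of this overlap matching in $\mathbf{Barc(T)}$, which is computed by \cref{prop:ker_overlap_matching}: every target bar $(T,(I,a))$ is matched, with partner of underlying interval $I$, so its contribution to the cokernel is $(T\setminus I,(I,a))$. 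The one point that needs a genuinely new remark — and is the main (minor) obstacle here — is that $T\setminus I$ must again be an interval: in a total order the complement of an interval $I$ splits into the elements lying below all of $I$ and those lying above all of $I$, and the latter set is empty precisely when $I$ is not strictly bounded above, i.e.\ $I\in\immortalPart{\mathfrak{I}}$; hence for such $I$ the set $T\setminus I$ is a down-set and so an interval, except when $I=T$, in which case $T\setminus I=\emptyset$ and the summand simply disappears. After the harmless reindexing $(T\setminus I,(I,a))\mapsto(T\setminus I,a)$ this yields $\mathcal{B}(\unbornPart{\mathcal{D}(B)})\cong\unbornPart{B}$.

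\emph{Compatibility with $\mathcal{F}$.} This is even easier than the ancient-part argument in \cref{thm:lifespan_matchingmodule_commute}, since no delicate limit computation is required. By \cref{lem:F_colim}, $\mathcal{F}$ commutes with $\colim$, hence with $\Delta\circ\colim$, and naturality of $\colimUnit$ gives a commutative square identifying $\colimUnit_{\mathcal{F}(D)}$ with $\mathcal{F}(\colimUnit_D)$ through vertical isomorphisms, naturally in $D$. As $\mathcal{F}$ is exact (\cref{prop:F_creates_exactness}) it preserves cokernels, so $\unbornPart{\mathcal{F}(D)}=\coker\colimUnit_{\mathcal{F}(D)}\cong\mathcal{F}(\coker\colimUnit_D)=\mathcal{F}(\unbornPart{D})$, with the isomorphism natural in $D$.

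\emph{Persistence modules.} Given $M$ with barcode $B$, i.e.\ $\mathcal{M}(B)\cong M$, I combine the two steps above with \cref{prop:passing_to_modules}: using $\mathcal{M}\cong\mathcal{F}\circ\mathcal{D}$, the fact that $\mathcal{D}$ is quasi-inverse to $\mathcal{B}$ (so $\unbornPart{\mathcal{D}(B)}\cong\mathcal{D}(\unbornPart{B})$ by the first step), and the commutation of the second step, one obtains $\unbornPart{M}\cong\unbornPart{\mathcal{F}(\mathcal{D}(B))}\cong\mathcal{F}(\unbornPart{\mathcal{D}(B)})\cong\mathcal{F}(\mathcal{D}(\unbornPart{B}))\cong\mathcal{M}(\unbornPart{B})$, so $\unbornPart{B}$ is a barcode of $\unbornPart{M}$. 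Apart from the interval-ness check in the first step and the observation that the $I=T$ bars contribute nothing, every part of the argument is a direct transcription of the proofs already given for the lifespan functors, which is why the statement can reasonably be left with an omitted proof.
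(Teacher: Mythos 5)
Your proposal is correct and is precisely the ``analogous proof'' the paper omits: it runs the unborn complement $\coker\colimUnit_{(-)}$ through the same three stages as \cref{thm:barcode_parts,thm:lifespan_matchingmodule_commute} (cokernel of the overlap matching $\mathcal{B}(\colimUnit_{\mathcal{D}(B)})$ via \cref{prop:ker_overlap_matching}, commutation with $\mathcal{F}$ via \cref{lem:F_colim} and exactness, then \cref{prop:passing_to_modules}). The added checks --- that $T\setminus I$ is a down-set, hence an interval, exactly when $I\in\immortalPart{\mathfrak{I}}$, and that the $I=T$ bars vanish --- are the right ones and are handled correctly.
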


For the ghost complement, however, not all of the corresponding statements hold in general: It does not commute with the matching module functor and thus does not change the barcode of a persistence module as we would like it to. This is closely related to the fact that $\mathcal{F}$ does not commute with limits as mentioned before \cref{thm:lifespan_matchingmodule_commute}. The problem disappears for classes of persistence modules where limits do commute with $\mathcal{F}$, e.g. those of \emph{finite type}, which are persistence modules with a finite barcode. Similarly, everything works out as desired if the index set has a smallest element $t_{\min}$, because then, we have $\lim\mathcal{F}(D)\cong\mathcal{F}(D_{t_{\min}})\cong\mathcal{F}(\lim D)$. 

\begin{prop}\label{prop:ghostlike_part}
Assume that $(T,\leq)$ has a smallest element. Then we have
\[
\mathcal{B}(\ghostlikePart{\mathcal{D}(B)})\cong\ghostlikePart{B}:=\{(T\setminus I,a)\mid(I,a)\in B, I\neq T\text{ and }I\in\ancientPart{\mathfrak{I}}\}
\]
for any barcode $B$. 
Moreover, the ghostlike complement commutes with the matching module functor up to natural transformation. In particular, if $M$ is a persistence module with barcode $B$, then $\ghostlikePart{B}$ is a barcode of $\ghostlikePart{M}$.
\end{prop}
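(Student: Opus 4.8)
The plan is to transcribe the proofs of \cref{thm:barcode_parts}, \cref{thm:lifespan_matchingmodule_commute}, and \cref{prop:unborn_part}, which establish the corresponding statements for the lifespan functors and for the unborn complement. The only place where the hypothesis on $T$ enters is that a smallest element forces $\mathcal{F}$ to commute with limits, playing here the role that \cref{lem:F_colim} plays for the unborn complement.

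For the barcode formula, the identification of $\mathcal{B}(\Delta\lim\mathcal{D}(B))$ with $\{(T,(I,a))\in\mathfrak{I}\times B\mid I\in\ancientPart{\mathfrak{I}}\}$ and of $\mathcal{B}(\limCounit_{\mathcal{D}(B)})$ with the overlap matching sending $(T,(I,a))$ to $(I,a)$ is already carried out inside the proof of \cref{thm:barcode_parts}. Since $\mathcal{B}$ is an equivalence (\cref{thm:equivalence}) it preserves kernels, so $\mathcal{B}(\ghostlikePart{\mathcal{D}(B)})\cong\ker\mathcal{B}(\limCounit_{\mathcal{D}(B)})$, and I would compute this kernel with the formula for overlap matchings from \cref{prop:ker_overlap_matching}: the matched bar $(T,(I,a))$ contributes $(T\setminus I,(I,a))$, which survives precisely when $T\setminus I\neq\emptyset$, i.e.\ $I\neq T$. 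This gives $\mathcal{B}(\ghostlikePart{\mathcal{D}(B)})\cong\ghostlikePart{B}$, and one reads off from the same formula that the lifespan transformations become (co)inclusions. One should also record that $T\setminus I$ is indeed an interval whenever $I\in\ancientPart{\mathfrak{I}}$: such an $I$ is exactly a nonempty downward-closed interval, so its complement is upward-closed. This part does not in fact use the hypothesis on $T$.

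For the compatibility with $\mathcal{F}$, the key point is that a smallest element $t_{\min}$ makes both $\lim\colon\mathbf{Mch^T}\to\mathbf{Mch}$ and $\lim\colon\mathbf{Vec^T}\to\mathbf{Vec}$ naturally isomorphic to evaluation at $t_{\min}$ — for matching diagrams this is read off the description of $\lim D$ in \cref{prop:mch_lim_colim} (a component fails to be strictly bounded below exactly when it meets $D_{t_{\min}}$), and for vector spaces it is immediate — so that $\mathcal{F}(\lim D)\cong\mathcal{F}(D_{t_{\min}})\cong\lim\mathcal{F}(D)$ naturally. Applying the pointwise functor $\mathcal{F}$ to the constant diagram $\Delta\lim D$ then yields a natural isomorphism $\mathcal{F}(\Delta\lim D)\cong\Delta\lim\mathcal{F}(D)$ intertwining $\mathcal{F}(\limCounit_D)$ with $\limCounit_{\mathcal{F}(D)}$; since $\mathcal{F}$ is exact and hence preserves kernels,
\[
\mathcal{F}(\ghostlikePart{D})=\mathcal{F}(\ker\limCounit_D)\cong\ker\mathcal{F}(\limCounit_D)\cong\ker\limCounit_{\mathcal{F}(D)}=\ghostlikePart{\mathcal{F}(D)}
\]
naturally in $D$. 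The last assertion then follows: if $B$ is a barcode of $M$ then $M\cong\mathcal{F}(\mathcal{D}(B))$ by \cref{prop:passing_to_modules}, so $\ghostlikePart{M}\cong\mathcal{F}(\ghostlikePart{\mathcal{D}(B)})\cong\mathcal{F}(\mathcal{D}(\ghostlikePart{B}))\cong\mathcal{M}(\ghostlikePart{B})$, combining the barcode formula with \cref{thm:equivalence} and \cref{prop:passing_to_modules}.

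Most of this is routine bookkeeping; the only genuine content is the commutation of $\mathcal{F}$ with limits under the smallest-element hypothesis, and the step to watch is the naturality of the identification $\lim\cong(-)_{t_{\min}}$ in both categories, since this is what guarantees that the counits — and hence their kernels — are matched canonically rather than merely abstractly isomorphic.
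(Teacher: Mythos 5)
Your proposal is correct and follows exactly the route the paper intends: the paper omits this proof as analogous to those of \cref{thm:barcode_parts} and \cref{thm:lifespan_matchingmodule_commute}, pointing out that the only new ingredient is the identification $\lim\mathcal{F}(D)\cong\mathcal{F}(D_{t_{\min}})\cong\mathcal{F}(\lim D)$ afforded by the smallest element, which is precisely the step you isolate and justify. Your observations that the barcode formula itself needs no hypothesis on $T$ and that $T\setminus I$ is an interval for $I\in\ancientPart{\mathfrak{I}}$ are correct and consistent with the paper's discussion.
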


\begin{rem}
For persistence modules, some of the lifespan functors admit more explicit descriptions. In particular, the mortal part of a persistence module $M=((M_t)_t,(m_{s,t})_{s,t})$ is the submodule given by the subspaces $\deathPart{M}_t = \bigcup_u\ker m_{t,u}\subseteq M_t$.
In this form, the construction has been considered before by H\"oppner and Lenzing in \cite{MR645942}. They describe it as analogous to taking the submodule of all torsion elements of a module over some integral domain. Certain categories of persistence modules can be shown to be equivalent to categories of modules over some ring (cf. \cite{MR2121296,MR3873177,MR3348168}), and under these equivalences, the mortal part indeed corresponds to the torsion submodule. 
Furthermore, the immortal part has also been considered before in applications of barcodes to symplectic geometry. For a recent example, see \cite{MR4289925}.

Note that, on the other hand, while the ancient part is always a submodule of the intersection of images, $\ancientPart{M}_t \subseteq \bigcap_s\im m_{s,t}$, in general the two need not be isomorphic. The persistence module $M_3$ described in \cref{rem:counterex_inj_proj} provides a counterexample.

\end{rem}

\subsection{Lifespan and Dualization}\label{subsec:lifespan_dual}
When passing from homology to cohomology, we will later see that what happens on the level of persistence modules is dualization. When passing from absolute to relative persistent homology, our correspondence result \cref{prop:abs_rel_single_filtration} will involve the lifespan functors. So, in order to get the full picture involving all four persistence modules associated to a diagram of spaces, we now also have to analyze whether dualization is compatible with lifespan functors.

Because we dualize, we will not only consider persistence modules indexed by $(T,{\leq})$, but also ones indexed by $(T,{\geq})$. When interpreting lifespan in terms of barcodes, it is important to note that the index set changes the meaning of the different classes of intervals we consider, i.e., \cref{defi:bounded_intervals} depends on whether we use the usual or the opposite order.
For example, we have $\birthPart{\mathfrak{I}}(T,{\leq}) = \deathPart{\mathfrak{I}}(T,{\geq})$ and $\ancientPart{\mathfrak{I}}(T,{\leq}) = \immortalPart{\mathfrak{I}}(T,{\geq})$.
Thus, one should expect duals of mortal parts to correspond to nascent parts of duals and so on. To avoid confusion, we introduce some notation.%

\begin{defi}
In the context of the indexing category $\mathbf{T^{\mathrm{op}}}$, 
we will write
\begin{align*}
\deathDualPart{(-)} &:= \birthPart{(-)}
,&
\immortalDualPart{(-)} &:= \ancientPart{(-)}
,&
\birthDualPart{(-)} &:= \deathPart{(-)}
,&
\ancientDualPart{(-)} &:= \immortalPart{(-)}
,\\
\finiteDualPart{(-)} &:= \finitePart{(-)}
,&
\constantDualPart{(-)} &:= \constantPart{(-)}
,&
\immortalBirthDualPart{(-)} &:= \ancientDeathPart{(-)}
,&
\ancientDeathDualPart{(-)} &:= \immortalBirthPart{(-)}
,\\
\unbornDualPart{(-)} &:= \ghostlikePart{(-)}
,&
\ghostlikeDualPart{(-)} &:= \unbornPart{(-)}
.
\end{align*}
\end{defi}
The above convention now yields $\dualLifespan{\mathfrak{I}}(T,\geq) = \lifespan{\mathfrak{I}}(T,\leq)$ for any lifespan functor $\lifespan{(-)}$.

\begin{prop}
Let $M$ be a persistence module. We have canonical isomorphisms
\begin{align*}
\dual{(\deathPart{M})}&\cong\deathDualPart{(\dual{M})}, &
\dual{(\immortalPart{M})}&\cong\immortalDualPart{(\dual{M})}, &
\dual{(\unbornPart{M})}&\cong\unbornDualPart{(\dual{M})}
\end{align*}
\end{prop}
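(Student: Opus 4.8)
The plan is to deduce each isomorphism from the already-established compatibility of dualization with kernels, cokernels, and images (\cref{lem:dual_maps}), combined with the fact that dualization intertwines colimits and limits. First I would record the key observation that for a $\mathbf{T}$-indexed persistence module $M$, the dual of the canonical cocone $\colimUnit_M \colon M \to \Delta\colim M$ is, under the canonical identification, the canonical cone for $\dual M$ as a $\mathbf{T^{\mathrm{op}}}$-indexed module. Concretely, since $\dual{(-)} \colon \mathbf{Vec^T} \to \mathbf{Vec^{T^{\mathrm{op}}}}$ is exact and sends colimits over $\mathbf{T}$ to limits over $\mathbf{T^{\mathrm{op}}}$ (dualization exchanges the roles of products/coproducts and of the direction of the indexing order, and $\colim$ in $\mathbf{Vec}$ is built from coproducts and cokernels, all of which $\dual{(-)}$ turns into the building blocks of $\lim$), we get a canonical isomorphism $\dual{(\Delta\colim M)} \cong \Delta\lim \dual M$ in $\mathbf{Vec^{T^{\mathrm{op}}}}$ under which $\dual{(\colimUnit_M)}$ corresponds to $\limCounit_{\dual M} \colon \Delta\lim\dual M \to \dual M$. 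Dually, $\dual{(\limCounit_M)}$ corresponds to $\colimUnit_{\dual M}$.

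Granting this, the three isomorphisms follow mechanically. For the mortal part: $\dual{(\deathPart M)} = \dual{(\ker\colimUnit_M)} \cong \coker\dual{(\colimUnit_M)} \cong \coker\limCounit_{\dual M} = \birthPart{(\dual M)} = \deathDualPart{(\dual M)}$, using \cref{lem:dual_maps} for the first isomorphism, the observation above for the second, and the definitions of $\birthPart{(-)}$ and of $\deathDualPart{(-)}$ at the end. For the immortal part: $\dual{(\immortalPart M)} = \dual{(\im\colimUnit_M)} \cong \im\dual{(\colimUnit_M)} \cong \im\limCounit_{\dual M} = \ancientPart{(\dual M)} = \immortalDualPart{(\dual M)}$. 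For the unborn complement: $\dual{(\unbornPart M)} = \dual{(\coker\colimUnit_M)} \cong \ker\dual{(\colimUnit_M)} \cong \ker\limCounit_{\dual M} = \ghostlikePart{(\dual M)} = \unbornDualPart{(\dual M)}$. Naturality of all the isomorphisms involved (that of \cref{lem:dual_maps} and that of the colimit–limit exchange, which comes from the universal properties) makes these canonical.

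The main obstacle is making the first step fully rigorous: verifying that $\dual{(-)}$ carries the canonical morphism $\colimUnit_M$ to $\limCounit_{\dual M}$ under a canonical identification $\dual{(\colim M)} \cong \lim \dual M$. One clean way is to invoke the adjunctions directly: $\colim \dashv \Delta$ on the $\mathbf{T}$ side and $\Delta \dashv \lim$ on the $\mathbf{T^{\mathrm{op}}}$ side, together with the fact that $\dual{(-)}$ and its analogue on $\mathbf{T^{\mathrm{op}}}$-modules intertwine the diagonal functors strictly (since $\dual{(\Delta X)} = \Delta \dual X$ pointwise) and are (contravariant) equivalences after restricting to, say, pointwise-finite-dimensional modules; but since the statement is for arbitrary $M$, it is cleaner to argue pointwise and diagram-chase the universal cocone. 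Alternatively, one can give the argument elementwise on $\colim M = \bigl(\bigoplus_t M_t\bigr)/N$ and dualize, identifying $\dual{(\colim M)}$ with the subspace of $\prod_t \dual{M_t}$ of compatible families, which is exactly $\lim \dual M$; under this identification the transpose of $M_t \to \colim M$ is the projection $\lim\dual M \to \dual{M_t}$, i.e. the component of $\limCounit_{\dual M}$. Either route is routine but is the one place where care is needed; everything after it is a two-line substitution.
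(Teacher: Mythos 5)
Your proposal is correct and follows essentially the same route as the paper: the key point in both is that $\Hom(-,\mathbb{F})$ carries colimits to limits, giving the canonical identification $\dual{(\Delta\colim M)}\cong\Delta\lim\dual{M}$ under which $\dual{(\colimUnit_M)}$ becomes $\limCounit_{\dual{M}}$, after which \cref{lem:dual_maps} converts kernels, images, and cokernels as you describe. The paper states this in two sentences; your elaboration of why the transpose of the cocone is the cone for $\dual{M}$ is the same verification, just spelled out.
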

\begin{proof}
The functor $\Hom(-,\mathbb{F})$ takes colimits to limits, so we have a canonical isomorphism $\dual{(\Delta\colim M)}\cong\Delta\lim \dual{M}$. Together with the kernel, cokernel, and image descriptions for dual maps from \cref{lem:dual_maps}, this yields the claim.
\end{proof}

The limit functor for persistence modules commonly exhibits less desirable properties than the colimit functor. For example, the limit functor does not preserve exactness and does not commute with the functor $\mathcal{F}$ while the colimit functor does.
A similar phenomenon arises with dualization of persistence modules, preventing the previous proposition from holding for all lifespan functors: In general, we do not have an isomorphism between $\dual{(\Delta\lim M)}$ and $\Delta\colim\dual{M}$,
because the vector spaces $\Hom (\lim M,\mathbb{F})$ and $\colim\dual{M}$ need not be isomorphic.
However, if $(T,\leq)$ has a smallest element $t_{\min}$, then we have $\dual{(\Delta\lim M)}\cong\Delta\Hom(M_{t_{\min}},\mathbb{F})\cong\Delta\colim\dual{M}$. Thus, we get the following.

\begin{prop}\label{prop:dual_lifespan}
Assume that $(T,\leq)$ has a smallest element and let $M$ be a $\mathbf{T}$-indexed persistence module. Then we have canonical isomorphisms $\dual{(\lifespan{M})}\cong\dualLifespan{(\dual{M})}$ for any lifespan functor $\lifespan{(-)}$.
\end{prop}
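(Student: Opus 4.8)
The plan is to reduce everything to two facts about how dualization interacts with the unit $\colimUnit$ and the counit $\limCounit$, and then propagate these through the construction of the lifespan functors using exactness of $\dual{(-)}$. The previous proposition already handles $\deathPart{(-)}$, $\immortalPart{(-)}$, and $\unbornPart{(-)}$; its proof rests on the canonical isomorphism $\dual{(\Delta\colim M)}\cong\Delta\lim\dual M$ (since $\Hom(-,\mathbb{F})$ turns colimits into limits), under which $\dual{(\colimUnit_M)}$ is identified with $\limCounit_{\dual M}$. Under the smallest-element hypothesis I want the mirror statement: under the canonical isomorphism $\dual{(\Delta\lim M)}\cong\Delta\colim\dual M$, the morphism $\dual{(\limCounit_M)}$ is identified with $\colimUnit_{\dual M}$.

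First I would prove this identification at the level of components. Since $t_{\min}$ is the least element of $(T,\leq)$, the limit functor $\mathbf{Vec^T}\to\mathbf{Vec}$ is canonically the evaluation at $t_{\min}$, and the $t$-component of $\limCounit_M$ is the structure map $M_{t_{\min}}\to M_t$. Dually, $t_{\min}$ is the greatest element of $(T,\geq)$, so the colimit functor $\mathbf{Vec^{T^{\mathrm{op}}}}\to\mathbf{Vec}$ is evaluation at $t_{\min}$ as well, and the $t$-component of $\colimUnit_{\dual M}$ is the structure map $\dual M_t\to\dual M_{t_{\min}}$, which is precisely $\dual{(M_{t_{\min}}\to M_t)}$. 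Since dualization is computed pointwise, the composite $\dual{(\Delta\lim M)}\cong\Delta\dual{(M_{t_{\min}})}\cong\Delta\colim\dual M$ intertwines $\dual{(\limCounit_M)}$ with $\colimUnit_{\dual M}$; I would also check naturality in $M$, which is immediate from naturality of all the maps involved.

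With both identifications in hand, the argument becomes bookkeeping. Applying the exact functor $\dual{(-)}$ to the diagram $\Delta\lim M\to M\to\Delta\colim M$ and using the two isomorphisms yields the diagram $\Delta\lim\dual M\to\dual M\to\Delta\colim\dual M$ but with the two natural transformations swapped in role, i.e., $\dual{(\limCounit_M)}$ acting as $\colimUnit_{\dual M}$ and $\dual{(\colimUnit_M)}$ acting as $\limCounit_{\dual M}$. Since $\dual{(-)}$ is exact, it sends kernels to cokernels, cokernels to kernels, and images to images (by \cref{lem:dual_maps}). Running through \cref{defi:lifespan_1,defi:lifespan_2} and the subsequent definitions, this gives for instance $\dual{(\deathPart M)}=\dual{(\ker\colimUnit_M)}\cong\coker(\dual{(\colimUnit_M)})\cong\coker(\limCounit_{\dual M})=\birthPart{(\dual M)}=\deathDualPart{(\dual M)}$, and $\dual{(\ancientPart M)}=\dual{(\im\limCounit_M)}\cong\im(\colimUnit_{\dual M})=\immortalPart{(\dual M)}=\ancientDualPart{(\dual M)}$; the remaining lifespan functors $\finitePart{(-)}$, $\constantPart{(-)}$, $\ancientDeathPart{(-)}$, $\immortalBirthPart{(-)}$ (and, by the same reasoning, the ghost complement) are obtained from these by one further layer of kernels, cokernels, and images of lifespan transformations, so the same exactness argument applies. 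In every case the swap $\ker\leftrightarrow\coker$ forced by dualization is exactly what the $\dualLifespan{(-)}$ notation encodes, and compatibility with the lifespan transformations is automatic because all the isomorphisms are obtained by applying the exact functor $\dual{(-)}$ to the (co)inclusions of the lifespan diagram of $M$.

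The main obstacle is the first step: making precise and verifying (on components, with naturality) that $\dual{(\limCounit_M)}$ is the unit $\colimUnit_{\dual M}$ under $\dual{(\Delta\lim M)}\cong\Delta\colim\dual M$, since this is where the smallest-element hypothesis is genuinely used and where the previous proposition's argument does not directly carry over. Once that is settled, the rest is a formal consequence of exactness of $\dual{(-)}$ together with the definitions of the lifespan functors.
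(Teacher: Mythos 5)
Your proposal is correct and follows essentially the same route as the paper: the paper's proof consists precisely of the observation that a smallest element gives $\dual{(\Delta\lim M)}\cong\Delta\Hom(M_{t_{\min}},\mathbb{F})\cong\Delta\colim\dual{M}$ (identifying $\dual{(\limCounit_M)}$ with $\colimUnit_{\dual{M}}$), after which everything follows from exactness of dualization and \cref{lem:dual_maps} exactly as in the preceding proposition. Your more detailed componentwise verification of that identification is the same argument spelled out.
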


For later use, we also record the following completely equivalent reformulation in terms of persistence modules indexed by the opposite order.

\begin{prop}\label{prop:dual_lifespan_2}
Assume that $(T,\leq)$ has a largest element and let $M$ be a $\mathbf{T^{\mathrm{op}}}$-indexed persistence module. Then we have canonical isomorphisms $\dual{(\dualLifespan{M})}\cong\lifespan{(\dual{M})}$ for any lifespan functor $\lifespan{(-)}$.
\end{prop}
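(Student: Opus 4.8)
The plan is to derive \cref{prop:dual_lifespan_2} directly from \cref{prop:dual_lifespan} by a change of indexing category, exploiting the self-duality of the whole setup. Suppose $(T,\leq)$ has a largest element $t_{\max}$. Then the opposite order $(T,\geq)$ has a smallest element, namely $t_{\max}$. Let me write $\mathbf{S} = \mathbf{T^{\mathrm{op}}}$, so that $\mathbf{S}$ corresponds to a totally ordered set with a smallest element, and $\mathbf{S^{\mathrm{op}}} = \mathbf{T}$. A $\mathbf{T^{\mathrm{op}}}$-indexed persistence module $M$ is precisely an $\mathbf{S}$-indexed persistence module, and \cref{prop:dual_lifespan} applies to it verbatim with $\mathbf{S}$ in place of $\mathbf{T}$: we get canonical isomorphisms $\dual{(\lifespan{M})} \cong \dualLifespan{(\dual{M})}$, where now $\lifespan{(-)}$ is a lifespan functor in the context of the indexing category $\mathbf{S} = \mathbf{T^{\mathrm{op}}}$ and $\dualLifespan{(-)}$ is its counterpart in the context of $\mathbf{S^{\mathrm{op}}} = \mathbf{T}$.

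The next step is to reconcile the notational conventions. In the paragraph preceding the definition of $\dualLifespan{(-)}$, the paper fixes the convention that, \emph{in the context of the indexing category $\mathbf{T^{\mathrm{op}}}$}, the dagger/star decorations on the subscript are used for what would be the superscript-decorated functors in the context of $\mathbf{T}$; concretely, $\deathDualPart{(-)} := \birthPart{(-)}$, $\immortalDualPart{(-)} := \ancientPart{(-)}$, and so on, and these satisfy $\dualLifespan{\mathfrak{I}}(T,\geq) = \lifespan{\mathfrak{I}}(T,\leq)$. The point is that the subscript notation $\dualLifespan{(-)}$ is \emph{by definition} the superscript-notation lifespan functor with respect to the order $\mathbf{T^{\mathrm{op}}}$, and conversely the superscript notation $\lifespan{(-)}$, when applied in the $\mathbf{T^{\mathrm{op}}}$-context via the involution $\mathbf{S} \mapsto \mathbf{S^{\mathrm{op}}}$, is exactly the subscript-notation functor. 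So, applying \cref{prop:dual_lifespan} with the roles of $\mathbf{T}$ and $\mathbf{T^{\mathrm{op}}}$ exchanged, and translating through the convention, gives exactly $\dual{(\dualLifespan{M})} \cong \lifespan{(\dual{M})}$ for every lifespan functor $\lifespan{(-)}$, which is the claim. One should double-check the direction of dualization: $\dual{(-)}$ sends $\mathbf{T}$-indexed modules to $\mathbf{T^{\mathrm{op}}}$-indexed ones and vice versa, so for $M$ a $\mathbf{T^{\mathrm{op}}}$-indexed module, $\dual{M}$ is $\mathbf{T}$-indexed and both sides of the asserted isomorphism live in $\mathbf{Vec^T}$, which is consistent.

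I expect the only genuine subtlety — and hence the step deserving the most care — to be the bookkeeping of the dual-decoration convention: making sure that "a lifespan functor $\lifespan{(-)}$ in the $\mathbf{T^{\mathrm{op}}}$-context" is unambiguously identified with "$\dualLifespan{(-)}$ in the $\mathbf{T}$-context", including the ten special cases in the table (mortal/nascent swap, ancient/immortal swap, the mixed parts, and the ghost/unborn swap), and that the statement of \cref{prop:dual_lifespan} is being instantiated with the correct smallest-element hypothesis. Everything else is a formal consequence of \cref{prop:dual_lifespan} together with the fact that passing to the opposite order is an involution on totally ordered sets that interchanges "has a smallest element" with "has a largest element"; no new computation with kernels, cokernels, or images of $\limCounit$ and $\colimUnit$ is required.
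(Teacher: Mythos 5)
Your proposal is correct and coincides with the paper's own treatment: the paper introduces \cref{prop:dual_lifespan_2} explicitly as a ``completely equivalent reformulation'' of \cref{prop:dual_lifespan}, i.e., precisely the relabeling $\mathbf{T}\leftrightarrow\mathbf{T^{\mathrm{op}}}$ (exchanging ``smallest element'' with ``largest element'') together with the bookkeeping of the $\lifespan{(-)}$ versus $\dualLifespan{(-)}$ conventions that you carry out.
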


Note that above, we do not distinguish $\dual{(-)}$ notationally as a functor from $\mathbf{T}$-indexed persistence modules to  $\mathbf{T^{\mathrm{op}}}$-indexed persistence modules or vice versa.

Furthermore, in the p.f.d.\@ case, applying any lifespan functor $\lifespan{(-)}$ to a persistence module $M$ has the same effect on barcodes as the corresponding functor $\dualLifespan{(-)}$ applied to the dual persistence module $\dual{M}$.

\begin{prop}
Let $M$ be a p.f.d.\@ persistence module. Then
$\lifespan{M}$
and $\dualLifespan{(\dual{M})}$ have the same barcodes for any lifespan functor $\lifespan{(-)}$.
\end{prop}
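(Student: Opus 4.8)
The plan is to derive the statement directly from \cref{cor:change_in_barcode} and \cref{lem:dual_barcode}, the only real work being to keep track of which totally ordered set indexes which module. Since $M$ is p.f.d.\ it is interval-decomposable; fix a barcode $B$ of $M$. By \cref{lem:dual_barcode} this same $B$ is a barcode of $\dual M$, which is a p.f.d.\ persistence module indexed by $\mathbf{T^{\mathrm{op}}}$, i.e.\ by $(T,\geq)$.

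First I would apply \cref{cor:change_in_barcode} to $M$ over $(T,\leq)$: this gives that $\lifespan B=\{(I,a)\in B\mid I\in\lifespan{\mathfrak I}(T,\leq)\}$ is a barcode of $\lifespan M$. Then I would apply the same corollary, now with $(T,\geq)$ in place of $(T,\leq)$, to the module $\dual M$ and the functor $\dualLifespan{(-)}$ (which is precisely ``the'' lifespan functor $\lifespan{(-)}$ read in the indexing category $\mathbf{T^{\mathrm{op}}}$): this gives that $\{(I,a)\in B\mid I\in\dualLifespan{\mathfrak I}(T,\geq)\}$ is a barcode of $\dualLifespan{(\dual M)}$. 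Finally I would invoke the identity $\dualLifespan{\mathfrak I}(T,\geq)=\lifespan{\mathfrak I}(T,\leq)$ recorded right after the introduction of the notation $\deathDualPart{(-)},\birthDualPart{(-)},\dots$, which is itself immediate from \cref{defi:bounded_intervals} since strict boundedness below for $\leq$ is strict boundedness above for $\geq$, and so on for the other classes. This identity shows that the two selected sub-barcodes of $B$ coincide, so $\lifespan B$ is simultaneously a barcode of $\lifespan M$ and of $\dualLifespan{(\dual M)}$.

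To conclude that $\lifespan M$ and $\dualLifespan{(\dual M)}$ have \emph{the same} barcodes, and not merely one common one, note that both modules are interval-decomposable by the previous step, so by the uniqueness of interval decompositions---\cref{thm:krull_schmidt}, transported along the equivalence between matching diagrams and barcodes---each of them has, up to isomorphism, the single barcode $\lifespan B$; hence a barcode of one is a barcode of the other. I do not expect a genuine obstacle here: the argument is pure bookkeeping, and the only point demanding care is reading \cref{cor:change_in_barcode} for the opposite order when it is applied to $\dual M$, together with the matching of interval classes under order reversal.
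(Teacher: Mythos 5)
Your proposal is correct and matches the paper's own proof, which likewise combines \cref{lem:dual_barcode} with the explicit barcode formula of \cref{cor:change_in_barcode}; you have merely spelled out the order-reversal bookkeeping and the uniqueness-of-barcodes step that the paper leaves implicit.
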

\begin{proof}
By \cref{lem:dual_barcode} we know that p.f.d.\@ persistence modules have the same barcode as their duals, so the claim follows immediately from the explicit formula in \cref{cor:change_in_barcode} for the effect of lifespan functors on barcodes.
\end{proof}

\section{Projectivity, Injectivity, and Lifespan}
\label{subsec:proj_inj}
As a first application, we will use our lifespan functors to characterize projective and injective objects in the categories of barcodes, matching diagrams, and p.f.d.\@ persistence modules.

\begin{thm}\label{prop:inj_proj_barc}
A barcode $B$ is projective if and only if $\deathPart{B}=0$, and injective if and only if $\birthPart{B}=0$.
\end{thm}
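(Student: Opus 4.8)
The plan is to use the standard characterization, valid in the p-exact category $\mathbf{Barc(T)}$, that an object is projective exactly when every epimorphism onto it splits, and injective exactly when every monomorphism out of it splits. Combined with the explicit kernel and cokernel formulas for overlap matchings (\cref{prop:ker_overlap_matching}) and the splitting criterion of \cref{prop:split_overlap}, this reduces everything to a short combinatorial observation about intervals. I would prove the statement about projectivity and $\deathPart{(-)}$ in detail; the statement about injectivity and $\birthPart{(-)}$ is entirely dual, obtained by passing to the opposite order on $T$ (recall $\deathPart{\mathfrak{I}}(T,{\le})=\birthPart{\mathfrak{I}}(T,{\ge})$ and $\immortalPart{\mathfrak{I}}(T,{\le})=\ancientPart{\mathfrak{I}}(T,{\ge})$).

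The combinatorial core is: if $I\subseteq J$ are intervals in the totally ordered set $T$ and $J$ overlaps $I$ above, then every $t\in J\setminus I$ is a strict upper bound of $I$; consequently, if $I$ is not strictly bounded above, i.e. $I\in\immortalPart{\mathfrak{I}}$, then $J=I$. Indeed, since $I$ bounds $J$ below there is $s_0\in I$ with $s_0\le t$, and since $t\notin I$ and $I$ is convex, totality forces $t>s$ for every $s\in I$; hence $t$ lies in the (empty) set of strict upper bounds of $I$, so $J\setminus I=\emptyset$. The mirror statement, with ``overlaps above'' replaced by ``$I$ overlaps $J$ above'' and ``strict upper bound'' by ``strict lower bound'', is proved the same way.

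First I would show that $\deathPart{B}=0$ implies $B$ is projective. Let $\sigma\colon C\twoheadrightarrow B$ be an epi overlap matching. By the cokernel formula of \cref{prop:ker_overlap_matching}, $\coker\sigma=0$ forces each $(I,a)\in B$ to be matched to some $(J,a')\in C$ with $I\setminus J=\emptyset$, that is $I\subseteq J$; since moreover $J$ overlaps $I$ above and $I\in\immortalPart{\mathfrak{I}}$, the observation gives $J=I$. Thus every matched pair of $\sigma$ has identical underlying intervals, so $\sigma$ splits by \cref{prop:split_overlap} (applicable since $\sigma$ is epi). Hence every epi onto $B$ splits, so $B$ is projective. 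Conversely, if $\deathPart{B}\neq 0$, pick $(I,a)\in B$ with $I$ strictly bounded above and a strict upper bound $u_0$ of $I$, and let $J$ be the convex hull of $I\cup\{u_0\}$; this is an interval with $I\subsetneq J$ and $J$ overlaps $I$ above. Let $C$ be $B$ with the bar $(I,a)$ replaced by $(J,a)$, and let $\sigma\colon C\to B$ match $(J,a)$ to $(I,a)$ and every other bar to itself. Then $\sigma$ is an overlap matching with $\coker\sigma=0$, so epi, but it contains the pair $((J,a),(I,a))$ with $J\neq I$, so it is not split by \cref{prop:split_overlap}; hence $B$ is not projective.

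The injective statement follows by the dual argument: replace ``epi'' by ``mono'', ``cokernel'' by ``kernel'', ``$J$ overlaps $I$ above'' by ``$I$ overlaps $J$ above'', ``strict upper bound'' by ``strict lower bound'', and $\immortalPart{\mathfrak{I}}$ by $\ancientPart{\mathfrak{I}}$ throughout; equivalently, apply the already-proven equivalence in $\mathbf{Barc}(T^{\mathrm{op}})$. The main obstacle I anticipate is not deep but purely a matter of care: keeping the direction of the overlap conditions straight in each invocation of \cref{prop:ker_overlap_matching,prop:split_overlap}, and checking that the interval $J$ constructed in the two converse directions really is an interval overlapping $I$ in the required sense; once the combinatorial observation above is in place, the rest is essentially bookkeeping.
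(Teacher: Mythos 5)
Your combinatorial lemma and both ``negative'' constructions are correct, and the non-split epi you build when $\deathPart{B}\neq 0$ is essentially the one the paper uses. The gap is in your very first step: you reduce projectivity to ``every epimorphism onto $B$ splits'' and call this a standard characterization valid in p-exact categories. The implication ``projective $\Rightarrow$ every epi onto it splits'' is trivial and is all you need for the converse direction, but the implication you actually rely on, ``every epi onto $B$ splits $\Rightarrow$ $B$ is projective,'' is proved in abelian (or Quillen-exact) categories by pulling back an arbitrary epi $\tau\colon B''\to B'$ along an arbitrary $\sigma\colon B\to B'$ and using that the projection of the pullback onto $B$ is again epi. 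A Puppe-exact category need not admit such pullbacks --- the paper only ever invokes the existence of pullbacks of monos and pushouts of epis (\cite[Lemma~2.2.4]{MR2977522}), and no pullback-stability of epis is established for $\mathbf{Barc(T)}$ anywhere. (Dually, the injective half would need pushouts of monos.) A posteriori the characterization does hold in $\mathbf{Barc(T)}$, but only as a consequence of the theorem you are trying to prove, so citing it here is circular. Note also that your splitting argument constrains epis \emph{into} $B$, whereas projectivity is about lifting morphisms \emph{out of} $B$ through epis between other barcodes --- these are different morphisms.

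The repair is to prove the lifting property directly, which is what the paper does: given $\sigma\colon B\to B'$ and an epi $\tau\colon B''\to B'$, form the plain relational composite $\rho=\tau'\circ\sigma$ with the opposite matching $\tau'$, and verify that $\rho$ is an overlap matching and that $\tau\bullet\rho=\sigma$. The verification uses $\deathPart{B}=0$ differently from your observation: every interval $I$ occurring in $B$ lies in $\immortalPart{\mathfrak{I}}$ and hence bounds \emph{every} interval above, while the cokernel formula of \cref{prop:ker_overlap_matching} applied to the epi $\tau$ gives $I'\subseteq I''$ for the matched bar in $B''$, from which the remaining overlap conditions for $\rho$ follow. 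Your interval lemma does correctly show that every epi onto a barcode with vanishing mortal part splits, but by itself this does not yield projectivity; if you insist on your structure, you must additionally prove that $\mathbf{Barc(T)}$ has pullbacks of epis along arbitrary morphisms (and pushouts of monos for injectivity), which is no easier than the direct lifting argument.
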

\begin{proof}
We will only show the first statement, the second one can be shown analogously. 
First, assume that $\deathPart{B}=0$. In order to show that $B$ is projective, we consider some overlap matching $\sigma\colon B\to B'$ and need to show that it factors through an arbitrary epi $\tau\colon B''\to B'$. Consider $\sigma$ and $\tau$ as ordinary matchings and set $\rho=\tau'\circ\sigma$, where $\tau'$ is the opposite matching of $\tau$ (see \cref{defi:matchings}). We show that $\rho$ is in fact an overlap matching, i.e., that for any $((I,a),(I'',a''))\in\rho$ we have that $I$ overlaps $I''$ above:

Since $\deathPart{B}=0$, we have $I\in\immortalPart{\mathfrak{I}}$, so that $I$ bounds any other interval, and in particular $I''$, above. What is left to check is that $I''$ bounds $I$ below and that the two intervals have non-empty intersection. If $((I,a),(I'',a''))\in\rho$, then by definition of $\rho$ there is some $(I',a')\in B'$ such that $((I,a),(I',a'))\in\sigma$ and $((I'',a''),(I',a'))\in\tau$. Since $\tau$ is epi, its cokernel vanishes and we obtain $I'\subseteq I''$ from the explicit cokernel formula in \cref{prop:ker_overlap_matching}. Moreover, $I$ overlaps $I'$ above, so we know that $I'$ bounds $I$ below and that $I\cap I'\neq\emptyset$. Together with $I'\subseteq I''$, this implies that $I''$ bounds $I$ below and that $I\cap I''\neq\emptyset$. In total, $I$ overlaps $I''$ above and $\rho$ is an overlap matching. 

Now, an easy calculation verifies that we have $\tau\bullet\rho=\sigma$, i.e., that when considering $\rho$ as an overlap matching, its overlap composition with $\tau$ recovers $\sigma$. Hence, we have shown that $\sigma$ factors through $\tau$, so $B$ is projective.

Next, assume that $\deathPart{B}\neq 0$. We want to show that in this case $B$ is not projective by constructing a barcode $B'$ and an epi $\sigma\colon B'\to B$ that does not split. To do so, choose $(I,a)\in B$ such that $I\in\deathPart{\mathfrak{I}}$, which is possible by our assumption $\deathPart{B}\neq 0$. Define 
\[
J=\{t\in T\mid\text{there exists }s\in I\text{ with }s\leq t\}.
\] 
Clearly, $J$ is an interval in $T$ and it overlaps $I$ above. We define 
\[
B'=(B\setminus\{(I,a)\})\cup\{(J,a)\}
\]
and $\sigma\colon B'\to B$ by matching each element of $B\setminus\{(I,a)\}$ to itself and matching $(J,a)$ to $(I,a)$. This matching $\sigma$ has trivial cokernel since $I\subseteq J$, so $\sigma$ is epi as desired. But, we have $I\neq J$ since $I\in\deathPart{\mathfrak{I}}$. Thus, $\sigma$ matches non-identical intervals and consequently does not split by \cref{prop:split_overlap}, so $B$ cannot be projective.
\end{proof}

Translating via the equivalence of barcodes and matching diagrams, we also obtain that a matching diagram is projective if and only if its mortal part vanishes and injective if and only if nascent part vanishes. 

By \cref{prop:vanishing_parts} we know that vanishing mortal and nascent part can equivalently be described in terms of the structure maps of a diagram, given that taking limits and colimits of diagrams is exact. It is therefore interesting to check whether taking limits and colimits of matching diagrams is exact.

\begin{prop}\label{prop:exact_lim_colim}
The functors $\colim,\lim\colon\mathbf{Mch^{T}}\to\mathbf{Mch}$ are exact.
\end{prop}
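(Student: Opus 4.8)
The plan is to reduce everything to the case of colimits by using the self-duality of $\mathbf{Mch}$, and to treat colimits by transporting the problem to $\mathbf{Vec}$ via the matching module functor $\mathcal{F}$. It suffices to show that each of $\colim$ and $\lim$ preserves short exact sequences, which is what exactness means for a functor between p-exact categories.

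For $\colim$, I would argue as follows. Given a short exact sequence $0\to D'\to D\to D''\to 0$ in $\mathbf{Mch^T}$, applying $\mathcal{F}$ yields a short exact sequence in $\mathbf{Vec^T}$, since $\mathcal{F}$ preserves exactness (\cref{prop:F_creates_exactness}). Because $\mathbf{T}$ is a nonempty totally ordered set it is filtered, so $\colim\colon\mathbf{Vec^T}\to\mathbf{Vec}$ is exact; applying it gives a short exact sequence $0\to\colim\mathcal{F}(D')\to\colim\mathcal{F}(D)\to\colim\mathcal{F}(D'')\to 0$. By the natural isomorphism $\colim\circ\,\mathcal{F}\cong\mathcal{F}\circ\colim$ of \cref{lem:F_colim}, this is $\mathcal{F}$ applied to $0\to\colim D'\to\colim D\to\colim D''\to 0$, and since $\mathcal{F}$ also reflects exactness, the latter sequence is short exact in $\mathbf{Mch}$. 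Hence $\colim$ is exact. (If $T=\emptyset$, both functors are constantly the zero object of $\mathbf{Mch}$, hence trivially exact.)

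For $\lim$, I would invoke the isomorphism $\mathbf{Mch}\cong\mathbf{Mch}^{\mathrm{op}}$ given by sending each matching to its opposite matching (\cref{defi:matchings}). Post-composing with this isomorphism identifies $\mathbf{Mch^T}$ with $(\mathbf{Mch}^{\mathbf{T^{\mathrm{op}}}})^{\mathrm{op}}$ in such a way that $\lim\colon\mathbf{Mch^T}\to\mathbf{Mch}$ corresponds to $\colim\colon\mathbf{Mch}^{\mathbf{T^{\mathrm{op}}}}\to\mathbf{Mch}$, since limits in any category are colimits in its opposite. As $(T,\geq)$ is again a totally ordered set, the colimit case already established (applied with $\mathbf{T^{\mathrm{op}}}$ in place of $\mathbf{T}$) shows this latter functor is exact; and since a functor preserves short exact sequences if and only if its opposite does, $\lim$ is exact as well.

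The step I expect to require the most care is this final reduction: one cannot simply rerun the $\mathcal{F}$-argument for limits, because $\mathcal{F}$ does not commute with $\mathbf{T}$-indexed limits (as shown just before \cref{thm:lifespan_matchingmodule_commute}) and $\lim\colon\mathbf{Vec^T}\to\mathbf{Vec}$ is not exact, so both inputs to the colimit argument fail. The self-duality of $\mathbf{Mch}$ is exactly what repairs this; the only subtlety is the bookkeeping of opposite categories — checking that the duality genuinely interchanges $\lim$ and $\colim$ and turns the index category into $\mathbf{T^{\mathrm{op}}}$, which is still totally ordered, so that \cref{lem:F_colim} and the exactness of filtered colimits in $\mathbf{Vec}$ remain available in the reduced setting.
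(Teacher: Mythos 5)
Your proposal is correct and follows essentially the same route as the paper: transport the colimit case to $\mathbf{Vec^T}$ via $\mathcal{F}$ (using that $\mathcal{F}$ preserves and reflects exactness and commutes with $\mathbf{T}$-indexed colimits), and then deduce the limit case from the self-duality of $\mathbf{Mch}$. The only difference is that you spell out the opposite-category bookkeeping for the duality step, which the paper leaves implicit.
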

\begin{proof}
Let $D \to D' \to D''$
be an exact sequence of matching diagrams. By \cref{prop:F_creates_exactness} the functor $\mathcal{F}$ preserves exactness, so the sequence $\mathcal{F}(D) \to \mathcal{F}(D') \to \mathcal{F}(D'')$
remains exact. It is well-known that taking colimits of persistence modules is exact. Thus, the sequence $\colim\mathcal{F}(D) \to \colim\mathcal{F}(D') \to \colim\mathcal{F}(D'')$
is still exact. Using that $\mathcal{F}$ commutes with taking $\mathbf{T}$-indexed colimits by \cref{lem:F_colim}, we get that $\mathcal{F}(\colim D) \to \mathcal{F}(\colim D') \to \mathcal{F}(\colim D'')$
is also exact. By \cref{prop:F_creates_exactness} the functor $\mathcal{F}$ reflects exactness, so
$\colim D \to \colim D' \to \colim D''$
is exact, proving that taking colimits of matching diagrams is exact. Self-duality of the category $\mathbf{Mch}$ implies that taking limits then has to be exact, too.
\end{proof}

Knowing that taking limits and colimits of matching diagrams is exact, we can now combine the equivalent conditions for vanishing mortal and nascent parts from \cref{prop:vanishing_parts,prop:inj_proj_barc} to obtain the following.

\begin{cor}\label{cor:inj_proj_mchdgm}
A matching diagram $D$ is projective if and only if all of its structure maps are mono, and injective if and only if all of its structure maps are epi.
\end{cor}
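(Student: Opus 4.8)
The plan is to assemble the statement from three results that have already been established, so the proof should be short. First I would invoke the characterization of projective and injective matching diagrams obtained by transporting \cref{prop:inj_proj_barc} across the equivalence $\mathbf{Mch^T}\simeq\mathbf{Barc(T)}$ of \cref{thm:equivalence}: an equivalence of categories preserves and reflects both projectivity and injectivity, and the lifespan functors are compatible with this equivalence by \cref{thm:barcode_parts}, so a matching diagram $D$ is projective exactly when $\deathPart{D}=0$ and injective exactly when $\birthPart{D}=0$. This is precisely the observation recorded immediately after the proof of \cref{prop:inj_proj_barc}.

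Next I would feed in \cref{prop:exact_lim_colim}, which says that both $\colim$ and $\lim$, regarded as functors $\mathbf{Mch^T}\to\mathbf{Mch}$, are exact. This is exactly the hypothesis needed to apply \cref{prop:vanishing_parts} in the case $\mathbf{A}=\mathbf{Mch}$ (legitimate since $\mathbf{Mch}$ is p-exact with $\mathbf{T}$-shaped limits and colimits by \cref{thm:mch_puppe_exact} and \cref{prop:mch_lim_colim}, so that the lifespan functors are defined on $\mathbf{Mch^T}$ in the first place): part (1) then gives that $\deathPart{D}=0$ if and only if every structure map of $D$ is mono, and part (2) gives that $\birthPart{D}=0$ if and only if every structure map of $D$ is epi.

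Chaining these equivalences yields the corollary: $D$ is projective $\iff \deathPart{D}=0 \iff$ all structure maps of $D$ are mono, and dually $D$ is injective $\iff \birthPart{D}=0 \iff$ all structure maps of $D$ are epi. I do not expect a genuine obstacle here; the one point meriting a word of care is making sure the hypothesis ``$\colim$ exact'' (resp.\ ``$\lim$ exact'') in \cref{prop:vanishing_parts} is read in the ambient category $\mathbf{A}=\mathbf{Mch}$ rather than in $\mathbf{Vec}$, which is exactly what \cref{prop:exact_lim_colim} supplies.
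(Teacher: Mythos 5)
Your proposal is correct and follows essentially the same route as the paper: transport the barcode characterization of \cref{prop:inj_proj_barc} across the equivalence of \cref{thm:equivalence} (using \cref{thm:barcode_parts} to identify the lifespan functors on both sides), then apply \cref{prop:vanishing_parts} with $\mathbf{A}=\mathbf{Mch}$, whose exactness hypothesis is supplied by \cref{prop:exact_lim_colim}. Your explicit remark that the exactness of $\colim$ and $\lim$ must be read in $\mathbf{Mch}$ rather than $\mathbf{Vec}$ is exactly the point the paper is careful about as well.
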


A natural question to ask is whether statements analogous to the above \cref{prop:inj_proj_barc,cor:inj_proj_mchdgm} also hold for persistence modules instead of matching diagrams: can we characterize projectivity/injectivity or structure maps being mono/epi by vanishing mortal/nascent parts? We start with some general results.

\begin{prop}\label{prop:death_part_monos}
For any persistence module $M$, we have $\deathPart{M}=0$ if and only if all structure maps of $M$ are mono. Moreover, if $\birthPart{M}=0$, then all structure maps of $M$ are epi.
\end{prop}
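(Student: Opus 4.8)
The plan is to treat the two statements separately, since they are of rather different natures: the first is an equivalence that falls out of the general machinery already established, while the second is only an implication and is proved by hand.

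For the first statement, I would simply invoke \cref{prop:vanishing_parts}(1), whose only hypothesis is that $\colim\colon\mathbf{Vec^T}\to\mathbf{Vec}$ be exact. This holds because $\mathbf{T}$ is a totally ordered set, hence a directed set, hence a filtered category (any two indices have a common upper bound, and thinness of $\mathbf{T}$ disposes of the condition on parallel arrows), and filtered colimits of vector spaces are exact. With this in hand, \cref{prop:vanishing_parts}(1) yields directly that $\deathPart{M}=\ker\colimUnit_M=0$ if and only if every structure map of $M$ is mono.

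For the second statement I would argue directly from the construction of the limit, \emph{without} appealing to \cref{prop:vanishing_parts}(2): that part requires $\lim$ to be exact, which fails for persistence modules (the $\varprojlim^1$ obstruction), and this is precisely why the converse implication is not asserted. Concretely, $\birthPart{M}=\coker\limCounit_M=0$ means that for every $t\in T$ the component $(\limCounit_M)_t\colon\lim M\to M_t$ is surjective. Now fix $s\leq t$ and $x\in M_t$, and choose a preimage, i.e.\ a family $(y_u)_{u\in T}\in\lim M\subseteq\prod_{u\in T}M_u$ with $y_t=x$. Since elements of $\lim M$ are compatible with the structure maps, applying the structure map $M_s\to M_t$ to $y_s$ gives $y_t=x$, so $x$ lies in the image of $M_s\to M_t$. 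As $x$ was arbitrary, the structure map $M_s\to M_t$ is epi.

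I do not expect a genuine obstacle here; the one point requiring care is exactly the asymmetry between colimits and limits. It is tempting to dualize the first statement verbatim, but that dualization is invalid because $\lim$ is not exact, so only the one-directional claim survives, and it must be obtained from the elementary lifting argument above rather than from a citation to \cref{prop:vanishing_parts}.
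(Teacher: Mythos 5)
Your proposal is correct and follows essentially the same route as the paper: the first statement is deduced from \cref{prop:vanishing_parts}(1) using exactness of filtered colimits of vector spaces, and the second is proved by observing that the epi $\lim M\to M_t$ factors through $M_s\to M_t$. Your element-wise lifting of a compatible family is just the concrete form of the paper's argument that the composition $\lim M\to M_s\to M_t$ equals the epi $\lim M\to M_t$, and your remark on why \cref{prop:vanishing_parts}(2) cannot be cited (non-exactness of $\lim$) matches the paper's reasoning exactly.
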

\begin{proof}
The first part of the proposition is just a special case of the first part of \cref{prop:vanishing_parts}, noting that taking colimits of persistence modules is exact.

For the second part, we repeat parts of the dual version of the proof of \cref{prop:vanishing_parts}:
If $\birthPart{M}=0$, then $\Delta\lim M\to M$ is epi, i.e., $\lim M\to M_{t}$ is epi for all $t\in T$. This implies in particular that for any structure map $M_{s}\to\ M_{t}$, the composition $\lim M\to M_{s}\to M_{t}$ is epi since it is equal to $\lim M\to M_{t}$. As a consequence, $M_{s}\to M_{t}$ needs to be epi, finishing the proof.
\end{proof}

In the category $\mathbf{vec^{T}}$ of p.f.d.\@  persistence modules, we can indeed characterize projectives and injectives in a way analogous to matching diagrams $\mathbf{Mch^{T}}$.

\begin{thm}\label{thm:proj_inj}
Let $M$ be a p.f.d.\@ persistence module.
\begin{enumerate}
\item The following are equivalent: %
\begin{enumerate}
\item All structure maps of $M$ are mono.
\item $\deathPart{M}=0$.
\item $M$ is projective in $\mathbf{vec^{T}}$.
\end{enumerate}
\goodbreak
\item The following are equivalent: %
\begin{enumerate}
\item All structure maps of $M$ are epi.
\item $\birthPart{M}=0$.
\item $M$ is injective in $\mathbf{vec^{T}}$.
\end{enumerate}
\end{enumerate}
\end{thm}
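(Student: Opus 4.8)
The plan is to prove each of the two chains of equivalences by showing (a)$\Leftrightarrow$(b), then (b)$\Rightarrow$(c), and finally (c)$\Rightarrow$(a), working out only the first item in detail since the second is dual. Much of the work is already done: by \cref{prop:death_part_monos} the equivalence (a)$\Leftrightarrow$(b) holds for arbitrary persistence modules, so the real content is the interplay with projectivity in $\mathbf{vec^T}$. Note that we cannot directly invoke \cref{prop:inj_proj_barc}, because that statement is about the p-exact category $\mathbf{Barc(T)}$ (equivalently $\mathbf{Mch^T}$), which is \emph{not} the same as $\mathbf{vec^T}$; projectivity has to be checked against epimorphisms in $\mathbf{vec^T}$. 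However, since every p.f.d.\@ persistence module is interval-decomposable by \cite[Theorem~1.1]{MR3323327}, we may choose a barcode $B$ for $M$ and work with $\mathcal{M}(B) = \bigoplus_{(I,a)\in B} C(I)$.

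For (b)$\Rightarrow$(c): assume $\deathPart{M}=0$, so by \cref{cor:change_in_barcode} every interval $I$ appearing in the barcode $B$ of $M$ lies in $\immortalPart{\mathfrak{I}}$, i.e.\@ is not strictly bounded above. Since a direct sum of projectives is projective, it suffices to show each interval module $C(I)$ with $I\in\immortalPart{\mathfrak{I}}$ is projective in $\mathbf{vec^T}$. Given an epi $\pi\colon N\twoheadrightarrow N'$ in $\mathbf{vec^T}$ (pointwise surjective) and a morphism $f\colon C(I)\to N'$, I would pick any $t_0\in I$, lift the generator $f_{t_0}(1)\in N'_{t_0}$ to some $x\in N_{t_0}$, and attempt to propagate: for $t\in I$ with $t\geq t_0$ set the lift to be the image of $x$ under the structure map $N_{t_0,t}$; the subtlety is handling $t\in I$ with $t<t_0$, and more generally building a coherent family over all of $I$. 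The cleanest route is to observe that restricting $C(I)$ and $N,N'$ to the interval $I$ (a full subposet with the property that $C(I)|_I$ is the constant module $\Delta\mathbb{F}$), a lift of $f|_I$ corresponds exactly to a lift through $\lim(\pi|_I)\colon \lim N|_I \to \lim N'|_I$ of the element in $\lim N'|_I$ determined by $f$; since $I$ is not strictly bounded above, $I$ is a \emph{downward-directed}-complemented situation — more precisely, one uses that the structure maps let you express the generator's image compatibly, and that the limit of a pointwise-surjective diagram of finite-dimensional vector spaces over a totally ordered set is surjective (a Mittag–Leffler / finite-dimensionality argument: inverse limits of surjective systems of finite-dimensional vector spaces are surjective). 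This gives the required lift, hence projectivity of $C(I)$, hence of $M$.

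For (c)$\Rightarrow$(a): I would argue by contraposition exactly as in the proof of \cref{prop:inj_proj_barc}. Suppose some structure map of $M$ is not mono, equivalently (by \cref{prop:death_part_monos}) $\deathPart{M}\neq 0$, so the barcode $B$ of $M$ contains an interval $I\in\deathPart{\mathfrak{I}}$, say with multiplicity witnessed by $(I,a)\in B$. Set $J=\{t\in T\mid \exists\, s\in I,\ s\leq t\}$, which is an interval strictly containing $I$, and let $B' = (B\setminus\{(I,a)\})\cup\{(J,a)\}$. Applying $\mathcal{M}$ gives an epimorphism $\mathcal{M}(B')\twoheadrightarrow \mathcal{M}(B)\cong M$ in $\mathbf{vec^T}$ (the component $C(J)\to C(I)$ is the canonical surjection, which is pointwise surjective; note $\mathcal{M}(B')$ is still p.f.d.). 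If $M$ were projective this epi would split, giving a mono $C(I)\hookrightarrow C(J)\oplus(\text{rest})$; but any morphism $C(I)\to C(J)$ that is split-injective at the relevant indices would force — via the interval-indecomposable structure and the Krull–Schmidt property (\cref{thm:krull_schmidt}, transported through $\mathcal{F}$) — the intervals to coincide, contradicting $I\neq J$. Equivalently and more directly, a splitting of $\mathcal{M}(B')\to\mathcal{M}(B)$ would, by \cref{prop:F_creates_exactness} and \cref{thm:krull_schmidt}, descend to a splitting of the corresponding epi of barcodes $B'\to B$, which is impossible by \cref{prop:split_overlap} since it matches the non-identical intervals $J$ and $I$. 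Hence $M$ is not projective.

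The main obstacle I anticipate is the surjectivity-of-limits step inside (b)$\Rightarrow$(c): one must show that lifting through a pointwise epi is possible for the constant-on-$I$ module $C(I)$, and the only place this can fail for general diagrams is in the inverse-limit direction. Finite-dimensionality of the values (the p.f.d.\@ hypothesis, which is \emph{essential} here — the analogous statement fails in $\mathbf{Vec^T}$) rescues this via a Mittag–Leffler argument, but the details of setting up the coherent family of lifts over a totally ordered index set with no minimum need to be written carefully. Everything else is either quoted from earlier results or a direct dualization.
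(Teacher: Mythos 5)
Your proposal is correct in outline and matches the paper everywhere except at (b)$\Rightarrow$(c), where you take a genuinely different route. The paper never verifies the lifting property directly: it shows that every epi $\varphi\colon N\twoheadrightarrow C(I)$ in $\mathbf{vec^{T}}$ splits, by decomposing the \emph{source} $N$ into interval modules $C(J_\beta)$, using pointwise finite-dimensionality to find a single component $\varphi_{\beta_0}\colon C(J_{\beta_0})\to C(I)$ that is already epi, and concluding $J_{\beta_0}=I$ from $I\in\immortalPart{\mathfrak{I}}$; since $\mathbf{vec^{T}}$ is abelian, splitting of all epis onto $C(I)$ gives projectivity. You instead identify $\Hom(C(I),N)$ with $\lim(N|_I)$ and reduce the lifting problem to surjectivity of $\lim(N|_I)\to\lim(N'|_I)$, which holds because derived inverse limits of p.f.d.\@ systems over directed posets vanish (the paper's own citation \cite[Proposition 1.1]{MR260839}). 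This works, but two points need care. First, the identification $\Hom(C(I),N)\cong\lim(N|_I)$ is \emph{exactly} where $I\in\immortalPart{\mathfrak{I}}$ enters: cofinality of $I$ in $T$ means there is no index above $I$ at which a morphism out of $C(I)$ would be forced to vanish, so a compatible family over $I$ extends by zero to all of $T$; for $I\in\deathPart{\mathfrak{I}}$ that extra vanishing condition is precisely what destroys projectivity. Your write-up is vaguest at this point and should make it explicit, since otherwise the argument would "prove" that every interval module is projective. Second, your route imports the $\lim^1$-vanishing input already into the projective half, whereas the paper's splitting argument is more elementary and only needs that input for the injective half.

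For (c)$\Rightarrow$(a), your primary argument (a splitting of $\bigoplus_{\alpha\neq\alpha_0}C(I_\alpha)\oplus C(J)\twoheadrightarrow M$ would yield a nonzero morphism $C(I_{\alpha_0})\to C(J)$, impossible since $I_{\alpha_0}$ does not overlap $J$ above) is exactly the paper's. Your "more direct" alternative — descending the module splitting to a splitting of barcodes via \cref{prop:F_creates_exactness} and \cref{thm:krull_schmidt} — does not work as stated: Krull--Schmidt reflects isomorphy of \emph{objects}, and an arbitrary morphism between matching modules need not be induced by a matching, so drop that variant and keep the $\Hom$-vanishing argument. Finally, the dualization for part 2 is not literal: the paper proves (a)$\Rightarrow$(b) there separately using exactness of $\lim$ on $\mathbf{vec^{T}}$, and notes that barcode decompositions must be read as biproducts so that closure of injectives under \emph{products} applies; your Mittag--Leffler remark shows you have the key ingredient, but this half deserves the same explicit treatment.
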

\begin{proof}
Starting with the first part of the theorem, we first note that we have already shown that $\deathPart{M}=0$ is equivalent to $M$ having mono structure maps for any persistence module $M$ in \cref{prop:death_part_monos}. Thus, what is left to show for the first part is that $\deathPart{M}=0$ is equivalent to $M$ being projective in the p.f.d.\@ category. To do so, we fix a barcode decomposition $M\cong\bigoplus_{\alpha}C(I_{\alpha})$, which is possible by Crawley-Boevey's Theorem since $M$ is p.f.d.

Now, assume that $\deathPart{M}=0$, or equivalently $M=\immortalPart{M}$. We want to show that $M$ is projective in $\mathbf{vec^{T}}$. A direct sum of projectives is projective, so it suffices to check that the interval modules $C(I_{\alpha})$ in the decomposition of $M$ are projective in $\mathbf{vec^{T}}$. Recall that $\mathbf{vec^{T}}$ is abelian, so in order to show that $C(I_{\alpha})$ is projective in $\mathbf{vec^{T}}$ we only need to show now that any epimorphism $\varphi\colon N\to C(I_{\alpha})$ with $N$ p.f.d.\@ splits:

Choosing a barcode decomposition $N\cong\bigoplus_{\beta}C(J_{\beta})$ induces maps $\varphi_{\beta}\colon C(J_{\beta})\to C(I_{\alpha})$ for each $\beta$. Because $\varphi$ is epi and $N$ is p.f.d.\@ there has to be some $\beta_{0}$ such that $\varphi_{\beta_{0}}$ is epi. %
This implies that $I_{\alpha}\subseteq J_{\beta_{0}}$ and that simultaneously $J_{\beta_{0}}$ has to overlap $I_{\alpha}$ above. Since we assume $\deathPart{M}=0$, $I_{\alpha}\in\immortalPart{\mathfrak{I}}$ holds by \cref{cor:change_in_barcode}, so we obtain $I_{\alpha}=J_{\beta_{0}}$, which yields that $\varphi_{\beta_{0}}$ is an isomorphism. We can thus define $\psi\colon C(I)\to N$ as the composition
\[
\begin{tikzcd}
C(I_{\alpha}) \arrow[r,"\varphi_{\beta_{0}}^{-1}"] & C(J_{\beta_{0}}) \arrow[r,hook] & \bigoplus_{\beta}C(J_{\beta}) \cong N.
\end{tikzcd}
\]
By construction, we have $\varphi\circ\psi=\varphi_{\beta_{0}}^{-1}\circ\varphi_{\beta_{0}}$, which is the identity on $C(I_{\alpha})$, so $\varphi$ splits. Thus, we have shown that $C(I_{\alpha})$, and consequently $M$, is projective.

Next, we assume that $\deathPart{M}\neq 0$ and show that $M$ is not projective in $\mathbf{vec^{T}}$. Because the mortal part of $M$ does not vanish, there now has to be some $\alpha_{0}$ with $I_{\alpha_{0}}\in\deathPart{\mathfrak{I}}$. We proceed as in the proof of \cref{prop:inj_proj_barc} and define 
\[
J=\{t\in T\mid\text{there exists }s\in I_{\alpha_{0}}\text{ with }s\leq t\}.
\] 
Clearly, $J$ is an interval in $T$ and it overlaps $I_{\alpha_{0}}$ above. The canonical map $C(J)\to C(I_{\alpha_{0}})$ is an epi, which we can use to obtain an epi 
\[
\bigoplus_{\alpha\neq\alpha_{0}}C(I_{\alpha})\oplus C(J)\to \bigoplus_{\alpha\neq\alpha_{0}}C(I_{\alpha})\oplus C(I_{\alpha_{0}}) \cong M
\]
in $\mathbf{vec^{T}}$, which is an isomorphism on all summands except for $C(J)$. If this map would split, the splitting would induce a morphism $C(I_{\alpha_{0}})\to C(J)$, which cannot exist since $I_{\alpha_{0}}$ by construction does not overlap $J$ above. Thus, the epi we constructed does not split and $M$ is not projective in $\mathbf{vec^{T}}$. This finishes the proof of the first part.

For the second part, we omit showing that $\birthPart{M}=0$ is equivalent to $M$ being injective in $\mathbf{vec^{T}}$ because the proof is very similar to the previous arguments. One thing to note is that for injectivity, one wants to use that products of injectives are again injectives. This can still be done with barcode decompositions in the p.f.d.\@ setting because they can not only be interpreted as decompositions in terms of direct sums, but actually as decompositions in terms of biproducts.

That $\birthPart{M}=0$ implies $M$ having epi structure maps has been shown for all persistence modules $M$ before in \cref{prop:death_part_monos}, so what remains to be checked is that $\birthPart{M}=0$ if $M$ is p.f.d.\@ and its structure maps are epi. To see that this is the case, one can use the fact that the functor $\lim\colon\mathbf{vec^{T}}\to\mathbf{Vec}$ is exact (because derived inverse limits of p.f.d.\@ persistence modules vanish \cite[Proposition 1.1]{MR260839}, \cite[Th\'eor\`eme 2]{MR136640}) and reuse the argument in the proof of \cref{prop:vanishing_parts} to show that $\Delta\lim M\to M$ is epi if the structure maps of $M$ are epi, which implies that $\birthPart{M}=0$.
\end{proof}

Any p.f.d.\@ persistence module has a barcode and that the lifespan functors are compatible with the passage to barcodes, so another way of phrasing the previous theorem is that a p.f.d.\@ persistence module is projective or injective in $\mathbf{vec^{T}}$ if and only if its barcode has the corresponding property in $\mathbf{Barc(T)}$.

\begin{rem}\label{rem:counterex_inj_proj}
When considering persistence modules beyond the p.f.d.\@ category, some of the equivalences established in \cref{thm:proj_inj} do not hold anymore in general.
We give a few examples.

Any projective object in $\mathbf{Vec^{T}}$ has vanishing mortal part and mono structure maps. However, the real-indexed interval module $M_{1}=C(0,\infty)$ satisfies $\deathPart{M_{1}}=0$ and has mono structure maps, but it is not projective in $\mathbf{Vec^{T}}$ because the obvious epi \(\bigoplus_{n\in\mathbb{N}_{>0}} C\left(\frac{1}{n},\infty\right)\to C(0,\infty)=M_{1}\) does not split. For a classification of projectives in $\mathbf{Vec^{T}}$, see \cite{MR596150}.

Similarly, any injective object in $\mathbf{Vec^{T}}$ has epi structure maps, but we are presently unable to determine whether injective persistence modules also need to have vanishing nascent part. In any case, 
the real-indexed persistence module $M_{2}=C(-\infty,0)$ satisfies $\birthPart{M_{2}}$ and has epi structure maps, but it is not injective in $\mathbf{Vec^{T}}$ because the obvious mono \(M_{2}=C(-\infty,0)\to\prod_{n\in\mathbb{N}_{>0}}C\left(-\infty,-\frac{1}{n}\right)\) does not split. For classification results for injectives in $\mathbf{Vec^{T}}$, see \cite{MR645942,MR709843}.

Having epi structure maps also generally does not imply vanishing nascent part for persistence modules: There is a non-zero persistence module $M_{3}$ indexed by the opposite poset of the first uncountable ordinal $\omega_{1}$ whose structure maps are all epi, but which satisfies $\lim M_{3}=0$ (\cite[Section 3]{MR61086}), so that $\birthPart{M_{3}}=M_{3}\neq 0$.
\end{rem}

\section{Functorial Dualities in Persistent Homology}\label{sec:dualities}

In \cref{subsec:absrel}, we discuss functorial versions of the duality results by \textcite{MR2854319}. As an application, we present some considerations in \cref{subsec:image_duality} on obtaining images of morphisms in persistent homology from their counterparts in relative cohomology, which is of great relevance for making their algorithmic computation more efficient.

\subsection{Persistent Homology Dualities in Terms of Lifespan Functors}
\label{subsec:absrel}

We will now prove a  generalization of the absolute-relative correspondence \cite[Proposition 2.4]{MR2854319} involving our lifespan functors. In order for this to work nicely, we only consider filtrations that satisfy the following condition.

\begin{defi}
Let $X$ be a $\mathbf{T}$-indexed diagram of topological spaces. We say that $X$ is \emph{colimit proper} if the natural maps $\colim H_{d}(X)\to H_{d}(\colim X)$ and  $H_{d}(\colim X)\to\lim H_{d}(\colim X,X)$ are isomorphisms for all $d$. 
\end{defi}

Note that colimit properness is always satisfied if the diagram $X$ is initially empty and eventually constant. In particular, if the index set has a largest element $t_{\max}$ and a smallest element $t_{\min}$ then every $X$ with $X_{t_{\min}}=\emptyset$ is colimit proper. These properties are usually given in the computational setting for persistent homology.

\begin{thm}\label{prop:abs_rel_single_filtration}
Let $X$ be a colimit proper filtration of topological spaces. For all $d$, we have the following isomorphisms, which are natural in $X$:
 \begin{align*}
 \deathPart{H_{d-1}(X)}&\cong\birthPart{H_d(\colim X,X)},
 \\
 \unbornPart{H_{d}(X)}&\cong\ancientPart{H_{d}(\colim X,X)},
 \\
 \immortalPart{H_{d}(X)}&\cong\ghostlikePart{H_{d}(\colim X,X)}.
\end{align*}
\end{thm}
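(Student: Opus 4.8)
The plan is to obtain all three isomorphisms at once by chasing the long exact homology sequence of the pair $(\colim X, X)$, viewed as a long exact sequence of persistence modules, exactly as sketched for a single filtration in the introduction. Since $X$ is a filtration, each inclusion $C_*(X_t)\hookrightarrow C_*(\colim X)$ is a monomorphism, so we obtain a short exact sequence of persistent chain complexes $0\to C_*(X)\to C_*(\Delta\colim X)\to C_*(\colim X,X)\to 0$, using the definition of $C_*(\colim X,X)$ as the pointwise cokernel. Its functorial long exact homology sequence is a long exact sequence of persistence modules, natural in $X$:
\[
\cdots \to H_d(X)\xrightarrow{\ i_*\ }\Delta H_d(\colim X)\xrightarrow{\ j_*\ } H_d(\colim X,X)\xrightarrow{\ \partial\ } H_{d-1}(X)\xrightarrow{\ i_*\ }\Delta H_{d-1}(\colim X)\to\cdots .
\]

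The next step is to identify the maps in this sequence with the units and counits of the adjunctions $\colim\dashv\Delta\dashv\lim$, which is where colimit properness enters. The maps $H_d(X_t)\to H_d(\colim X)$ form a cocone under $H_d(X)$, so $i_*$ factors as $\colimUnit_{H_d(X)}$ followed by $\Delta$ applied to the comparison map $\colim H_d(X)\to H_d(\colim X)$, which is an isomorphism by assumption; dually, the maps $H_d(\colim X)\to H_d(\colim X,X_t)$ form a cone over $H_d(\colim X,X)$, so $j_*$ factors as $\limCounit_{H_d(\colim X,X)}$ precomposed with the comparison isomorphism $\Delta H_d(\colim X)\to\Delta\lim H_d(\colim X,X)$. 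Both factorizations are forced by universal properties, hence natural in $X$. Since pre- or post-composing with an isomorphism does not change kernels, images, coimages, or cokernels, we may henceforth replace $i_*$ by $\colimUnit$ and $j_*$ by $\limCounit$ everywhere.

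Now write $A=H_{d-1}(X)$, $B=H_d(X)$, $R=H_d(\colim X,X)$, and recall $\deathPart{(-)}=\ker\colimUnit$, $\birthPart{(-)}=\coker\limCounit$, $\immortalPart{(-)}=\im\colimUnit$, $\ancientPart{(-)}=\im\limCounit$, $\unbornPart{(-)}=\coker\colimUnit$, $\ghostlikePart{(-)}=\ker\limCounit$. Exactness at $A$ gives $\deathPart{A}=\ker\colimUnit_A=\im\partial\cong\coim\partial=R/\ker\partial=R/\im\limCounit_R=\coker\limCounit_R=\birthPart{R}$, which is the first isomorphism. Exactness at $\Delta H_d(\colim X)$ gives $\im i_*=\ker j_*$, so $\immortalPart{B}=\im\colimUnit_B=\ker\limCounit_R=\ghostlikePart{R}$, the third isomorphism, and $\unbornPart{B}=\coker\colimUnit_B=(\Delta\lim R)/\ker\limCounit_R=\coim\limCounit_R\cong\im\limCounit_R=\ancientPart{R}$, the second isomorphism. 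All the isomorphisms of coimages with images are the canonical epi--mono factorizations, available because $\mathbf{Vec^T}$ is abelian. Naturality in $X$ of all three isomorphisms is inherited from naturality of the long exact homology sequence together with the functoriality of the lifespan functors and of $\ker$, $\im$, $\coim$, $\coker$.

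The homological chase itself is routine; I expect the only real point requiring care to be the second step, namely verifying that the connecting maps of the long exact sequence are genuinely the adjunction (co)units once $X$ is colimit proper, and checking that these identifications are natural in $X$ so that the final isomorphisms are natural as claimed.
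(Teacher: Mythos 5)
Your proposal is correct and follows essentially the same route as the paper: the short exact sequence of persistent chain complexes, its long exact homology sequence, the identification (via colimit properness) of $i_*$ and $j_*$ with the adjunction unit $\colimUnit$ and counit $\limCounit$ up to the comparison isomorphisms, and then a direct exactness chase giving $\ker\colimUnit_{d-1}\cong\coker\limCounit_d$, $\coker\colimUnit_d\cong\im\limCounit_d$, and $\im\colimUnit_d\cong\ker\limCounit_d$, with naturality inherited from the long exact sequence. Your treatment of the factorization of $i_*$ and $j_*$ through the comparison isomorphisms is in fact slightly more explicit than the paper's.
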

\begin{proof}
To shorten notation, we write $A$ for $\colim X$. Since $X$ is a filtration, the natural map $C_{*}(X)\to C_{*}(\Delta A)$ is mono. Thus, we have a short exact sequence
\[
\begin{tikzcd}
0\arrow[r] & C_{*}(X)\arrow[r] & C_{*}(\Delta A)\arrow[r] & C_{*}(A, X)\arrow[r] & 0
\end{tikzcd}
\]
This induces a long exact sequence of persistence modules
\[
\begin{tikzcd}
\cdots \arrow[r] & \Delta H_d(A) \arrow[r, "\limCounit_d"] & H_d(A,X) \arrow[r, "\partial"] & H_{d-1}(X) \arrow[r,"\colimUnit_{d-1}"] & \Delta H_{d-1}( A) \arrow[r] & \cdots 
\end{tikzcd}
\]
Since we assume $X$ to be colimit proper, the map $\limCounit_d$ can be identified with the counit 
\[\limCounit_{H_d(A,X)}\colon \Delta\lim H_d(A,X) \to H_d(A,X)\]
of the adjunction $\Delta\dashv\lim$.
Similarly, the map $\colimUnit_{d-1}$ may be identified with the unit
\[\colimUnit_{H_{d-1}(X)}\colon H_{d-1}(X)\to \Delta\colim H_{d-1}(A,X)\]
of the adjunction $\colim\dashv\Delta$. 
Applying the definition of the lifespan functors,
the claimed isomorphisms are now simply given by exactness of the above sequence
:
\begin{align*}
\deathPart{H_{d-1}(X)}\cong\ker\colimUnit_{d-1}&\cong \coker\limCounit_d\cong\birthPart{H_d(A,X)},
\\
\unbornPart{H_{d}(X)}\cong\coker\colimUnit_d&\cong\im\limCounit_d\cong \ancientPart{H_d(A,X)},
\\
\immortalPart{H_d(X)}\cong\im\colimUnit_d&\cong\ker\limCounit_d\cong \ghostlikePart{H_d(A,X)}.
\end{align*}

These isomorphisms are natural in $X$ as a direct consequence of the fact that the construction of the long exact sequence is natural in $X$.
\end{proof}

Using the barcode formulas for lifespan functors and complements in \cref{cor:change_in_barcode,prop:unborn_part,prop:ghostlike_part}, one can easily recover the original duality result by de Silva et al.\@ \cite[Proposition~2.4]{MR2854319}.
Moreover, naturality in the filtration variable implies that for a morphism $f \colon X \to Y$ between colimit proper filtrations with $\phi=\colim f$ we also get isomorphisms 
 \begin{align*}
\deathPart{H_{d-1}(f)}&\cong\birthPart{H_d(\phi,f)},
&
\unbornPart{H_{d}(f)}&\cong\ancientPart{H_{d}(\phi,f)},
&
\immortalPart{H_{d}(f)}&\cong\ghostlikePart{H_{d}(\phi,f)}
\end{align*}
in the category of morphisms of persistence modules. These also translate to isomorphisms between the corresponding images, kernels, and cokernels.

Note that the isomorphism between the mortal part of the absolute persistent homology and the nascent part of the relative persistent homology in the proof of \cref{prop:abs_rel_single_filtration} is induced by the boundary operator. This means that if an interval in the nascent part of the barcode of the relative persistent homology is represented by some relative cycle, the boundary of this cycle represents the same interval in the absolute persistent homology in one dimension lower, as observed in
\cite{MR2854319}.

\begin{rem}
While the above result is stated for persistent homology of filtrations of spaces, a similar statement holds in the purely algebraic setting.
Given a filtered chain complex $C$, we can consider the short exact sequence
\[
\begin{tikzcd}
0\arrow[r] & C\arrow[r] & \Delta\colim C\arrow[r] & \unbornPart{C}\arrow[r] & 0
\end{tikzcd}
\]
We can then continue as in the proof above to get natural isomorphisms
 \begin{align*}
 \deathPart{H_{d-1}(C)}&\cong\birthPart{H_d(\unbornPart{C})}
&
 \unbornPart{H_{d}(C)}&\cong\ancientPart{H_{d}(\unbornPart{C})}
&
 \immortalPart{H_{d}(C)}&\cong\ghostlikePart{H_{d}(\unbornPart{C})}.
\end{align*}

\end{rem}

For completeness, we also record a functorial version of the correspondence between persistent homology and persistent cohomology \cite[Proposition 2.3]{MR2854319}, which follows immediately from the universal coefficient theorem.

\begin{prop}\label{prop:hom_cohom}
Let $X$ be a $\mathbf{T}$-indexed diagram of topological spaces. For all $d$, we have the following isomorphisms, which are natural in $X$:
\begin{align*}
\dual{H_{d}(X)}&\cong H^{d}(X),
\\
\dual{H_{d}(\colim X,X)}&\cong H^{d}(\colim X,X).
\end{align*}
\end{prop}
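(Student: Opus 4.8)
The plan is to invoke the universal coefficient theorem for cohomology, which over a field $\mathbb{F}$ gives a particularly clean statement: for any chain complex $C_*$ of $\mathbb{F}$-vector spaces, there is a natural isomorphism $H^d(\dual{C_*}) \cong \dual{H_d(C_*)}$, since $\mathbb{F}$ is injective over itself and the $\mathrm{Ext}$ term vanishes. Here $\dual{C_*}$ denotes the cochain complex obtained by applying $\Hom(-,\mathbb{F})$ pointwise, which is exactly the cochain complex $C^*$ defining cohomology. I would first recall this statement at the level of a single space and note that the isomorphism $H^d(X_t) \cong \dual{H_d(X_t)}$ is natural in the space $X_t$, hence assembles into an isomorphism of $\mathbf{T^{\mathrm{op}}}$-indexed persistence modules after applying the construction pointwise over $t \in T$ and observing compatibility with the structure maps. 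This yields the first isomorphism $\dual{H_d(X)} \cong H^d(X)$.

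For the relative statement, the key observation is that the relative (co)chain complexes in \cref{subsec:pers_hom} are defined so as to be compatible with dualization. Recall $C_*(\colim X, X) = \coker(C_*(X) \to C_*(\Delta\colim X))$ and $C^*(\colim X, X) = \ker(C^*(\Delta\colim X) \to C^*(X))$. Since $\Hom(-,\mathbb{F})$ is exact (\cref{lem:dual_maps}) and takes colimits to limits, it sends the cokernel defining $C_*(\colim X, X)$ to the kernel defining $C^*(\colim X, X)$; more precisely, $\dual{(C_*(\colim X, X))} \cong \ker(\dual{C_*(\Delta\colim X)} \to \dual{C_*(X)}) \cong \ker(C^*(\Delta\colim X) \to C^*(X)) = C^*(\colim X, X)$, using that dualization of $\Delta\colim$ agrees with $\Delta\lim$ of the dual at the chain level and that $C^*(\Delta\colim X) = \dual{C_*(\Delta\colim X)}$ by definition of the cochain functor. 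Then applying the universal coefficient theorem again—in the form $H^d(\dual{C_*}) \cong \dual{H_d(C_*)}$ with $C_* = C_*(\colim X, X)$—gives $\dual{H_d(\colim X, X)} \cong H^d(\dual{C_*(\colim X,X)}) \cong H^d(C^*(\colim X, X)) = H^d(\colim X, X)$.

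Naturality in $X$ is inherited throughout: the universal coefficient isomorphism is natural in the chain complex, the identification $C^* = \dual{C_*}$ is functorial, and the relative complexes are built by functorial kernel/cokernel constructions from $X$. I expect the only mildly delicate point to be bookkeeping the variance: homology is $\mathbf{T}$-indexed while cohomology is $\mathbf{T^{\mathrm{op}}}$-indexed, so $\dual{(-)}$ is being used as the functor $\mathbf{Vec^T} \to \mathbf{Vec^{T^{\mathrm{op}}}}$, and one should check that the structure maps of $H^*(X)$ really are the $\mathbb{F}$-duals of those of $H_*(X)$ rather than their transposes-with-a-sign or anything requiring a sign convention—but this is automatic from the definition of the singular cochain functor as the pointwise $\mathbb{F}$-dual of the singular chain functor. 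No genuine obstacle arises; the statement is, as the paper says, essentially immediate from the universal coefficient theorem once the relative complexes are recognized as duals of one another.
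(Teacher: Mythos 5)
Your proposal is correct and takes the same route as the paper, which offers no detailed proof beyond the remark that the statement ``follows immediately from the universal coefficient theorem.'' Your write-up is exactly the expected expansion of that remark: the field-coefficient UCT gives the natural isomorphism $H^d(\dual{C_*})\cong\dual{H_d(C_*)}$, the persistent cochain complex is by definition the pointwise dual of the persistent chain complex, and exactness of $\Hom(-,\mathbb{F})$ turns the cokernel defining $C_*(\colim X,X)$ into the kernel defining $C^*(\colim X,X)$, with naturality inherited at every step.
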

While the correspondence in \cite[Proposition 2.3]{MR2854319} is stated on the level of barcodes, the natural isomorphism asserted in \cref{prop:hom_cohom} appears in its proof, which essentially combines the previous statement with the fact that p.f.d.\@ persistence modules have the same barcode as their duals (\cref{lem:dual_barcode}).

As in the absolute-relative correspondence, naturality in the variable $X$ yields corresponding isomorphisms in the category of morphisms of persistence modules for maps $f \colon X \to Y$.

\subsection{Absolute Homology Images from Relative Cohomology Images}\label{subsec:image_duality}

As a concrete application, we want to explain how to use our previous results for the efficient computation of barcodes for images of morphisms in persistent homology. Note that similar considerations also apply for kernels and cokernels of such morphisms.

As mentioned above, and as is explained e.g.\@ in \cite{MR4298669}, one of the most efficient ways currently known to compute the barcode of the persistent homology of a filtration of finite simplicial complexes is to actually compute the barcode of the persistent relative cohomology with the so-called clearing optimization, and to then translate this to persistent homology via the two duality results from \textcite{MR2854319}.

Our generalizations of these duality results now allow us to proceed similarly for the image of a map $f \colon X \to Y$. Since we are talking about computational speed-ups, $X$ and $Y$ are assumed to be filtrations of finite simplicial complexes indexed by a totally ordered set $\mathbf{T}$ with a smallest element $t_{\min}$ and a largest element $t_{\max}$. We also assume that $X_{t_{\min}} = Y_{t_{\min}} = \emptyset$, so that both filtrations are colimit proper, and that $\colim H_{d}(f) = \lim H_{d}(\colim f, f) = H_{d}(f_{t_{\max}})$ is an isomorphism.

In order to compute the barcode for $\im H_{d}(f)$, we start with applying the (non-natural) decomposition
\[
\im H_{d}(f) \cong 
\deathPart{(\im H_{d}(f))}
\oplus
\immortalPart{(\im H_{d}(f))}
\]
from \cref{cor:lifespan_split}.
We consider both summands separately, making use of the fact that taking barcodes is compatible with direct sums.

Starting with the first summand, we observe that because $\colim H_{d}(f)  = H_{d}(f_{t_{\max}})$ is an isomorphism, and in particular a monomorphism, we have
\[
\deathPart{(\im H_{d}(f))}
\cong
\im (\deathPart{H_{d}(f)})
\]
using the first part of \cref{thm:image_parts}.
The natural duality \cref{prop:abs_rel_single_filtration}, which we can apply since $X$ and $Y$ are colimit proper, provides an isomorphism
\[
\im (\deathPart{H_{d}(f)})
\cong
\im (\birthPart{H_{d+1}(\colim f,f)}).
\]
An application of the second part of \cref{thm:image_parts} yields the isomorphism
\[
\im (\birthPart{H_{d+1}(\colim f,f)})
\cong
\birthPart{(\im H_{d+1}(\colim f,f))}
\]
using that $\lim H_{d}(\colim f, f) = H_{d}(f_{t_{\max}})$ is epi.
Finally, the duality of homology and cohomology from \cref{prop:hom_cohom} yields an isomorphism
\[
\birthPart{(\im H_{d+1}(\colim f,f))}
\cong
(\birthPart{\dual{(\im H^{d+1}(\colim f,f))})},
\]
where we also make use of the fact that applying dualization twice yields the identity on p.f.d\@ persistence modules. Finally, because our index set has a largest element, \cref{prop:dual_lifespan_2} gives
\[
(\birthPart{\dual{(\im H^{d+1}(\colim f,f))})}
\cong
\dual{(\birthDualPart{(\im H^{d+1}(\colim f,f))})}.
\]
In total, the above implies that 
$
\deathPart{(\im H_{d}(f))}
$
and 
$
\birthDualPart{(\im H^{d+1}(\colim f,f))}
$
have the same barcode by \cref{lem:dual_barcode} because we are in the p.f.d.\@ setting, so we can obtain the mortal part of the absolute homology barcode from the one in relative cohomology.

For the second term in the mortal-immortal decomposition of $\im H_{d}(f)$, we have
\[
\immortalPart{(\im H_{d}(f))}
\cong
\immortalPart{H_{d}(X)}
\]
by \cref{thm:image_parts}.
We proceed again with our natural absolute-relative duality from \cref{prop:abs_rel_single_filtration} to obtain
\[
\immortalPart{H_{d}(X)}
\cong
\ghostlikePart{H_{d}(\colim X, X)}.
\]
Since all modules are p.f.d., passing to cohomology with \cref{prop:hom_cohom} yields
\[
\ghostlikePart{(H_{d}(\colim X, X))}
\cong
\ghostlikePart{(\dual{(H^{d}(\colim X, X))})}.
\]
\cref{prop:dual_lifespan_2} finally yields
\[
\ghostlikePart{(\dual{(H^{d}(\colim X, X))})}
\cong
\dual{(\ghostlikeDualPart{H^{d}(\colim X, X)}))}.
\]
Thus, we can also obtain the immortal part of the absolute homology barcode from the one in relative cohomology.

\section*{Acknowledgements}
This research has been supported by the German Research Foundation (DFG) through the Collaborative Research Center SFB/TRR 109 \emph{Discretization in Geometry and Dynamics}, the Collaborative Research Center SFB/TRR 191 \emph{Symplectic Structures in Geometry, Algebra and Dynamics}, the Cluster of Excellence EXC-2181/1 \emph{STRUCTURES}, and the Research Training Group RTG 2229 \emph{Asymptotic Invariants and Limits of Groups and Spaces}.

\printbibliography

\todos

\end{document}